\documentclass[12pt,centertags,oneside]{amsart}
\usepackage[T1]{fontenc}
\usepackage[utf8]{inputenc}
\usepackage{amsmath}
\usepackage{amsthm}
\usepackage{amssymb}
\usepackage[mathscr]{eucal}
\usepackage{bbm}
\usepackage{pgf,tikz}
\usepackage[bookmarks,colorlinks=true,linkcolor=cyan,citecolor=blue,urlcolor=cyan]{hyperref}
\usepackage{faktor}
\usepackage{enumitem}
\usepackage{verbatim}
\usepackage{pgfplots}
\usepackage{marginnote}
\usepackage{fullpage}
\usepackage{color}

\usetikzlibrary{arrows}
\usetikzlibrary{shapes.geometric}
\usetikzlibrary{cd}
\usetikzlibrary{intersections, pgfplots.fillbetween}

\newcounter{basicblock}[section]
\renewcommand{\thebasicblock}{\thesection.\arabic{basicblock}}
\newenvironment{block}[1][]
{\refstepcounter{basicblock}\par\medskip\noindent\textbf{#1~\thebasicblock}.}
{\medskip}
\newenvironment{block*}[1][] {\par\medskip\noindent\textbf{#1}} {\medskip}

\newcommand{\ep}{\varepsilon}

\let\oldforall\forall
\renewcommand{\forall}{\oldforall \,}
\let\oldexists\exists
\renewcommand{\exists}{\oldexists \,}
\let\emptyset\varnothing

\newcommand{\dist}{\mathrm{dist}}

\newcommand{\Res}{\mathrm{Res}\,}
\newcommand{\lp}{\langle}
\newcommand{\rp}{\rangle}

\newcommand{\ddc}{\mathrm{dd^c}}
\newcommand{\de}{\partial}
\newcommand{\debar}{\bar{\partial}}

\newcommand{\cC}{\mathcal{C}}
\newcommand{\cD}{\mathcal{D}}

\newcommand{\cK}{\mathcal{K}}

\newcommand{\cU}{\mathcal{U}}

\newcommand{\C}{\mathbb{C}}
\newcommand{\D}{\mathbb{D}}

\newcommand{\N}{\mathbb{N}}
\newcommand{\Pb}{\mathbb{P}}
\newcommand{\R}{\mathbb{R}}
\newcommand{\Sb}{\mathbb{S}}
\newcommand{\Z}{\mathbb{Z}}

\setlist[itemize]{noitemsep, topsep=3pt}
\setlist[enumerate]{noitemsep, topsep=3pt}
\newenvironment{nlist}
    {\begin{enumerate}[label=(\roman*)]}
    {\end{enumerate}}

\newenvironment{defn}{\begin{block}[Definition]}{\end{block}}
\newenvironment{lm}{\begin{block}[Lemma]\begin{itshape}}{\end{itshape}\end{block}}
\newenvironment{prop}{\begin{block}[Proposition]\begin{itshape}}{\end{itshape}\end{block}}
\newenvironment{thm}{\begin{block}[Theorem]\begin{itshape}}{\end{itshape}\end{block}}
\newenvironment{cor}{\begin{block}[Corollary]\begin{itshape}}{\end{itshape}\end{block}}
\newenvironment{ex}{\begin{block}[Example]}{\end{block}}
\newenvironment{rmk}{\begin{block}[Remark]}{\end{block}}

\usepackage{amsaddr}

\title{Green currents of holomorphic correspondences on compact K\"ahler manifolds}
\author{Muhan Luo$\,^\text{a}$ and Marco Vergamini$\,^\text{b}$}
\address{$^\text{a}\,$National University of Singapore, Lower Kent Ridge Road 10, Singapore 119076, Singapore}
\address{$^\text{b}\,$Scuola Normale Superiore, Piazza dei Cavalieri 7, 56126 Pisa, Italy}
\email{e0708207@u.nus.edu \textnormal{(Muhan Luo)}}
\author{}
\email{marco.vergamini@sns.it \textnormal{(Marco Vergamini)}}
\date{}

\pgfplotsset{compat=1.18}

\begin{document}

\begin{abstract}
Consider a holomorphic correspondence $f$ on a compact K\"ahler manifold $X$ of dimension $k$. Let $1\le q\le k$ be any integer such that the dynamical degrees of $f$ satisfy $d_{q-1}<d_q$. We construct the Green currents $T_c$ of $f$ associated with the classes $c$ belonging to the dominant eigenspace for the action of $f^*$ on $H^{q,q}(X,\R)$. We also show that the super-potential of $T_c$ is $\log$-H\"older-continuous. When $f$ has simple action on cohomology and its graph satisfies an assumption on the local multiplicity, we prove the exponential equidistribution of all positive closed currents towards the main Green current, i.e., the only one associated to the unique maximal degree $d_q$. 
\end{abstract}

\maketitle

\medskip

\noindent\textbf{Mathematics Subject Classification 2020:} 32H50, 32U40, 37F80

\smallskip

\noindent\textbf{Keywords:} holomorphic correspondences, Green currents, super-potentials, equidistribution

\tableofcontents

\section{Introduction} \label{intro}

\emph{Green currents} are among the central objects in the study of a holomorphic dynamical system $f$. They are $f$-invariant positive closed currents, obtained as limit currents for the dynamics of $f$. The Green currents and their regularity properties have been studied extensively in many settings. See \cite{B65AM,FLM83BSBM,L83ETDS} for the classical case of dimension $1$ (in this case, the Green currents are measures). In higher dimension, see, for instance, \cite{DS09AM,S99PS} for the case of endomorphisms of projective spaces, \cite{BS91IM,BS92MA,S99PS} for the case of complex Hénon maps, and \cite{C01AM,D-TD12AM,DS05JAMS,DS10JAG} for the case of automorphisms of compact K\"ahler manifolds.

\emph{Holomorphic correspondences} (roughly speaking, multivalued holomorphic maps; see Section \ref{sec: pullback} for a precise definition) naturally appear in a number of settings, see for instance \cite{CU03JRAM,M02NTM}. From a dynamical point of view, they contain the complexity of both endomorphisms of projective spaces and automorphisms of K\"ahler manifolds. Their study has been developed in \cite{AV24AMP,DNV18AM,DS06CMH}, see also \cite{DKW21PAMQ} for an application to the dynamics of random matrices. In this paper, we systematically study the properties of Green currents associated with holomorphic correspondences. As was the case for the other dynamical systems mentioned above, this is a necessary first step towards the understanding of more refined dynamical properties.

\smallskip

The first goal of this paper is to construct the Green currents of a holomorphic correspondence $f$ and to study their regularity. This provides a generalization of the main results of \cite{DS05JAMS} to the case of correspondences, see also \cite{DS10JAG}. Recall that the \textit{dynamical degree of order $q$} of $f$, denoted by $d_q$, is the spectral radius of the pullback $f^*$ of $f$ acting on $H^{q,q}(X,\R)$, see Section \ref{sec: pullback}. We denote by $F$ (resp. $H$) the real dominant (resp. strictly dominant) subspace of $H^{q,q}(X,\R)$ for the action of $f^*$, see Section \ref{linear-alg}. When a $T_c$ as in the statement is non-zero, we say it is a \textit{Green current of order $q$} of $f$. 

\begin{thm}\label{main-thm-1}
    Let $f$ be a holomorphic correspondence on a compact K\"ahler manifold $X$ of dimension $k$. Suppose $1\leq q\leq k$ is an integer such that $d_{q-1}<d_q$, and let $\cD_q$ be the real space generated by positive closed $(q,q)$-currents.
    \begin{enumerate}
        \item Each class $c\in F$ has a representative $T_c\in\cD_q$  which depends linearly on $c$ and satisfies $f^*(T_c)=T_{f^*(c)}$. In particular, when $c\in H$, we have $f^*(T_c)=d_qT_c$.
        \item The super-potential of $T_c$ is $\log^r$-continuous for some $r>0$ and all $c\in F$.
    \end{enumerate}
\end{thm}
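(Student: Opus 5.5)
We follow the strategy of Dinh--Sibony for automorphisms, adapted to correspondences. First we set up the linear algebra. Since $d_{q-1}<d_q$ and $f^*$ preserves the cone of positive classes, a Perron--Frobenius-type argument gives that $d_q$ is an eigenvalue of $f^*$ on $H^{q,q}(X,\R)$. We fix a smooth closed real $(q,q)$-form $\alpha_c$ representing each $c\in F$, chosen linearly in $c$ via a basis of $F$. The key observation is that $f^*$ maps smooth forms to $(q,q)$-currents that are differences of positive closed currents with bounded super-potentials (or at least in $\cD_q$). For $c\in F$ we write
\[
f^*(\alpha_c)=\alpha_{f^*c}+\ddc U_c,
\]
where $U_c$ is a $(q-1,q-1)$-current, again depending linearly on $c$. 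This is the $\ddc$-lemma applied to currents in $\cD_q$; $U_c$ can be taken to be the Green quasi-potential, so it has controlled mass.

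Next we define the Green current as a telescoping series. Iterating gives
\[
(f^n)^*\alpha_c=\alpha_{(f^n)^*c}+\sum_{j=0}^{n-1}(f^{j})^*\ddc U_{(f^{n-1-j})^*c}.
\]
Since $F$ is $f^*$-invariant and $f^*|_F$ has spectral radius $d_q$, we use a norm adapted to the dynamics on $F$ and rescale. Concretely, we set
\[
T_c:=\alpha_c+\sum_{n\ge0}\ddc\,(f^n)^*U_{(f^*|_F)^{-n-1}c}
\]
when $f^*|_F$ is invertible. This holds on $F$ because all its eigenvalues have modulus $d_q$; $F$ is the dominant subspace. The convergence estimate is as follows. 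The mass of $(f^n)^*$ acting on $(q-1,q-1)$-currents grows like $(d_{q-1}+\epsilon)^n$, while $\|(f^*|_F)^{-n-1}\|\lesssim n^{m}d_q^{-n}$ because of Jordan blocks. Since $d_{q-1}<d_q$, the series converges geometrically in mass. Linearity in $c$ is then clear. The identity $f^*T_c=T_{f^*c}$ follows by applying $f^*$ term by term, which is legitimate by the continuity of $f^*$ on $\cD_q$ for correspondences proved earlier, and by reindexing. For $c\in H$ we get $f^*c=d_qc$ and hence $f^*T_c=d_qT_c$. To get $T_c\in\cD_q$, we write $U_c$ as a difference of negative quasi-psh-type currents plus smooth terms and bound each partial sum by positive currents of controlled mass.

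For part (2), we express the super-potential of $T_c$ through the same series. For smooth $R$ in the relevant test space we have $\mathcal U_{T_c}(R)=\mathcal U_{\alpha_c}(R)+\sum_n\langle U_{c_n},(f^n)_*R\rangle$ with $c_n=(f^*|_F)^{-n-1}c$. Each term is Hölder-continuous in $R$, since $U_{c_n}$ has bounded/Hölder super-potential coming from smooth data. Its size is $\lesssim (d_{q-1}+\epsilon)^n d_q^{-n}\cdot$poly$(n)$, but its Hölder constant grows like $A^n$, where $A$ reflects the Lipschitz distortion of $f_*$ on currents measured in the $\dist_\beta$ metric. We then split the sum at $N\approx \kappa\log(1/\dist(R,R'))$. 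On the head, the Hölder estimates give a contribution $A^N\dist^\beta$. On the tail, the uniform size bound gives $\lambda^N$ with $\lambda=(d_{q-1}+\epsilon)/d_q<1$. Optimizing yields only a $\log^{r}$ modulus, since the Hölder exponent deteriorates under iteration of correspondences (because $f_*$ is not Lipschitz in $\dist_\beta$ but only Hölder with exponent loss). Hence we obtain $\log^r$-continuity rather than Hölder continuity.

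The main obstacle is step (2): quantifying how $(f^n)_*$ (equivalently $(f^n)^*$ acting on super-potentials) degrades regularity for a correspondence with possibly singular graph. We would first try to prove a single-step estimate $\dist_\beta(f_*R,f_*R')\lesssim \dist_\beta(R,R')^{\gamma}$ using the graph as a subvariety of $X\times X$ and interpolation between $\mathcal C^\alpha$ norms. We then iterate it, tracking the exponents $\gamma^n$, which is exactly what forces the logarithmic modulus.
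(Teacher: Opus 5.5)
Your architecture matches the paper's: a telescoping series for the super-potential of $T_c$, then a head/tail split with geometrically degrading H\"older exponents. The step that carries all the difficulty in (2) is missing, however. You need two inputs: that $\cU_{f^*(\alpha_i)}$ (your functional $S\mapsto\lp U_c,S\rp$) is H\"older-continuous, and that $\|f_*(R)\|_{-2}\le C\|R\|_{-2}^{\kappa}$.

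\textbf{The missing regularity step in (2).} The first input does not come ``from smooth data'', because for a correspondence $f^*(\alpha_i)$ is not smooth. The second you only propose to attempt via the graph and interpolation of $\cC^\alpha$ norms, and that fails as stated for $q\ge2$. Indeed $\lp f_*(R),\Omega\rp=\lp R,f^*(\Omega)\rp$, and for a smooth $(q-1,q-1)$-form $\Omega$ of positive degree, $f^*(\Omega)$ has unbounded coefficients near critical values. For example, a branch $x\mapsto\sqrt{x}$ turns $i\,dy\wedge d\bar y$ into $i\,dx\wedge d\bar x/(4|x|)$, so $f^*(\Omega)$ is not a H\"older test form.

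The paper gets around this by a chain you would have to supply:
\begin{enumerate}
\item Lemma \ref{loja} makes the quasi-potential of $f^*(\omega)$ $1/\rho$-H\"older; only a function is pulled back there, so matching branches suffices.
\item Lemma \ref{DS10JAG-3_4_2} passes this regularity to $(f^*\omega)^q$.
\item The inequality $f^*(\Omega')\le f^*(\omega^q)\le(f^*\omega)^q$ of Lemma \ref{wpb}, together with the domination principle Lemma \ref{DNV18AM-1_1}, gives Proposition \ref{super_of_pf}.
\item The identity $\lp f_*(R),\Omega\rp=\cU_{f^*(\ddc\Omega)}(R)$ turns this into Corollary \ref{pf_is_hold}.
\end{enumerate}

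\textbf{The cutoff.} Your cutoff, with $N$ proportional to $\log(1/\dist)$ and head $A^N\dist^\beta$, is the H\"older argument; if its premises held it would give H\"older continuity. With $\xi=\|R\|_{-2}$ and $\|(f^l)_*R\|_{-2}\lesssim\xi^{\kappa^l}$, you must instead take $N\approx\log|\log\xi|/(2|\log\kappa|)$. Then $\xi^{\kappa^N}\le e^{-|\log\xi|^{1/2}}$ and $(d_q/\delta)^{-N}\lesssim|\log\xi|^{-r}$, where $d_{q-1}<\delta<d_q$.

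\textbf{Convergence in (1).} Your series is the same object as \eqref{green_superpot_formula}, but the convergence argument is not justified. Since $f^*(\alpha_c)$ is singular, $U_c$ is a non-smooth, non-closed $(q-1,q-1)$-current. A bound $\|(f^n)^*U_{c_n}\|\lesssim(d_{q-1}+\epsilon)^n|c_n|$ would need $U_c$ to be dominated by $C\omega^{q-1}$ and $(f^n)^*$ to be defined on such currents; neither is available.

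Dualize as in your part (2). The series $\sum_l\cU_{f^*(\alpha)}((f^l)_*R)\,(f^*|_F)^{-l-1}c$ converges uniformly on $*$-bounded sets, for three reasons:
\begin{enumerate}
\item $\|(f^l)_*R\|_*\lesssim\delta^l\|R\|_*$;
\item $\|(f^*|_F)^{-l-1}c\|\lesssim d_q^{-l}$, with no polynomial factor, since $f^*|_F$ is diagonalizable over $\C$;
\item each $\cU_{f^*(\alpha_i)}$ is continuous.
\end{enumerate}
It is this SP-uniform convergence, not mass convergence, that the continuity of $f^*$ on $\cD_q^c$ requires for your term-by-term identity.

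\textbf{Membership $T_c\in\cD_q$.} Your sketch here is too vague. The partial sums of your series equal $(f^N)^*\alpha_{(f^*|_F)^{-N}c}$, whose obvious $*$-bound is only $O(N^{m-1})$. The paper instead realizes $T_c$ as the limit of the $*$-bounded sequence $n_i^{1-m}d_q^{-n_i}(f^{n_i})^*S$, with $S$ a smooth closed form chosen via Proposition \ref{surj-linear}.
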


To prove Theorem \ref{main-thm-1}, we will construct the Green currents by giving an explicit formula for their \emph{super-potentials}. The $\log$-H\"older-continuity of the super-potentials follows from studying the regularity of the super-potential of $f^*(\Omega)$ for a closed $(q,q)$-form $\Omega$. This is done by applying a version of \L{}ojasiewicz type inequality. The main difficulties come from the non-invertibility of $f$ and the existence of both critical points and critical values.

Super-potentials were introduced by Dinh-Sibony in \cite{DS09AM} as a generalization of potentials for $(1,1)$-currents. The crucial point is that they are functions defined on a suitable set of exact currents. Like for classical potentials, their regularity properties imply strong regularity properties for the corresponding currents and have provided powerful tools in handling delicate problems in high-dimensional complex dynamics. See for instance \cite{BD26AM,dTV10MSMF,DNV18AM,DS10JAG}.

Being $\log$-H\"older-continuous (see Definition \ref{defn: regularity-SP}) is a condition which is “exponentially” weaker than being H\"older-continuous. The recent works of Bianchi-Dinh \cite{BD23JMPA,BD24GAFA} have shown the importance of such functions in the study of equilibrium states for projective spaces. The $\log$-H\"older regularity of the equilibrium measures for meromorphic correspondences was proved in \cite{DW23AMP}. More recently, to study mixing properties of endomorphisms of projective spaces and automorphisms of compact Kähler manifolds, the second author \cite{V25Ax} extended the notion of $\log$-H\"older-continuous regularity to currents of arbitrary bidegree.

\medskip
 
Our second result concerns the dynamical properties of a holomorphic correspondence with simple action on cohomology. We say the action of $f$ on cohomology is \textit{simple} if there exists an integer $0\leq q\leq k$ such that $d_q$ is strictly larger than all the other dynamical degrees and the action of $f^*$ on $H^{q,q}(X,\C)$ has only one eigenvalue of maximal modulus which is simple and equal to $d_q$. We call $d_q$ the {\it main dynamical degree} of $f$. Denote by $f^{-1}$ the \emph{adjoint correspondence} of $f$, see Section \ref{sec: pullback}. If $f^{-1}$ is holomorphic, it also has simple action on cohomology and $d_{k-q}(f^{-1})=d_q(f)=d_q$ is the main dynamical degree. The unique Green current of mass 1 associated with $f$ (resp. $f^{-1}$) is denoted by $T^+$ (resp. $T^-$).

The dynamics of holomorphic automorphisms (i.e.\ the special case when $d_0=d_k=1$) with simple action on cohomology has been studied extensively, see e.g.\ \cite{A24Ax,DS10CM,DS10JAG}. One of the main tools is the equidistribution towards the Green currents. Several equidistribution results have also been proved for endomorphisms of projective spaces (which are a special case of $d_0=1$ when $X=\Pb^k$), see e.g.\ \cite{A16TAMS,A18MA,DS09AM,T11AM}. For correspondences, it is shown in \cite{DNV18AM} that for any real smooth closed $(q,q)$-form $\alpha$, $d_q^{-n}(f^n)^*(\alpha)$ converges to a multiple of $T^+$. In \cite{AV24AMP}, this result is extended to non-pluripolar products of positive closed $(1,1)$-currents. Let $\Gamma_{f^n}$ be the graph of $f^n$ in $X\times X$. This object has been used to study some statistical properties of the dynamics, see e.g.\ \cite{BD26AM,VW25Ax}. It is shown in \cite{DNV18AM} that $d_q^{-n}[\Gamma_{f^n}]$ converges to $T^+\otimes T^-$. 
 
Under some mild assumptions on the local multiplicities of the graph of a holomorphic correspondence, (an analogue of not having highly critical periodic orbits in dimension $1$), our second main result strengthens the results of \cite{DNV18AM}. See also \cite{Luo25PAMS} for the case of dimension 1 and \cite[Corollary 4.8]{D05BSMF} for polynomial correspondences. More precisely, we show that the convergence towards Green currents is exponentially fast when tested against smooth forms. In the case of automorphisms of K\"ahler manifolds, this property has been the key to establish many strong statistical properties of the dynamical system, see e.g.\ \cite{BD26AM,DS10CM,VW25Ax}. We also show that these assumptions are satisfied by generic holomorphic correspondences. In particular, this is true for polynomial correspondences up to finite iterations (see Corollaries \ref{pk-corr} \& \ref{poly-small}). The definitions of $q$-small adjoint multiplicity and small multiplicity are given in Definition \ref{def:inv_small_mult}.

\begin{thm}\label{main-thm-2}
    Let $f$ be a holomorphic correspondence on $X$ such that $f^{-1}$ is also holomorphic. Suppose $f$ has simple action on cohomology and that the main dynamical degree is given by $d_q=d_q(f)$. Let $S$ be a current in $\cD_q$ such that $\lp S,T^-\rp=1$. 
    \begin{enumerate}
        \item If $f$ has $q$-small adjoint multiplicity, then $d_q^{-n}(f^{n})^*(S)$ converges to $T^+$ exponentially fast;
        \item If $f$ has small multiplicity, then the sequence of positive closed $(k,k)$-currents $d_q^{-n}[\Gamma_{f^n}]$ converges to $T^+\otimes T^-$ exponentially fast.
    \end{enumerate}
\end{thm}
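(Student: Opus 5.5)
We will prove both parts with super-potentials. Exponential convergence in $\cD_q$ will follow from a uniform estimate on the super-potentials of $d_q^{-n}(f^n)^*(S)-T^+$, evaluated on smooth test forms.

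\textbf{Reduction to an exact current.} By the simple action on cohomology, $H^{q,q}(X,\R)=\R\{T^+\}\oplus E$, where $E$ is $f^*$-invariant and $f^*|_E$ has spectral radius $\lambda<d_q$. Since $\lp S,T^-\rp=1=\lp T^+,T^-\rp$, the class of $S$ is $\{T^+\}+e$ with $e\in E$. We fix a smooth closed form $\beta_e$ in the class $e$, depending linearly on $e$, and write
\[
S=T^++\beta_e+R,
\]
where $R$ is an exact real current in $\cD_q$ with controlled mass. Using Theorem \ref{main-thm-1}(1) and $d_q^{-n}(f^n)^*T^+=T^+$, we get
\[
d_q^{-n}(f^n)^*S-T^+=d_q^{-n}(f^n)^*\beta_e+d_q^{-n}(f^n)^*R .
\]

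\textbf{The smooth term.} The class of $d_q^{-n}(f^n)^*\beta_e$ decays like $(\lambda/d_q)^n$. Iterating the construction of Theorem \ref{main-thm-1} (the Green current attached to $e\in E$, using $d_{q-1}<d_q$), its super-potential is bounded by $C(\lambda'/d_q)^n$ on smooth test forms of bounded $\cC^2$ norm.

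\textbf{The exact term.} This is the heart of part (1). The super-potential satisfies $\mathcal U_{(f^n)^*R}(\Phi)=\mathcal U_R((f^n)_*\Phi)$ on smooth exact test forms $\Phi$ of bidegree $(k-q+1,k-q+1)$. The operator $d_q^{-1}f_*$ has norm about $d_{q-1}/d_q<1$ on masses of such test currents, so masses contract geometrically. The difficulty is that $(f^n)_*\Phi$ is no longer smooth, and $\mathcal U_R$ is only defined or continuous on sufficiently regular currents. This is where $q$-small adjoint multiplicity enters. Via the \L{}ojasiewicz-type estimate used in Theorem \ref{main-thm-1}(2), it should give that $f_*$ maps $\cC^\alpha$ test forms to forms whose super-potentials are H\"older with a constant loss. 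More precisely, $(f^n)_*\Phi$ will have $\cC^{-\alpha_n}$-type norm growing at most like $A^n$, with the H\"older exponent decaying only geometrically (in fact subexponentially in the relevant norm), thanks to the small multiplicity.

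Then we interpolate. We regularize $R$ (or $(f^n)_*\Phi$) at scale $\epsilon_n=e^{-\delta n}$, using the log-H\"older continuity of the super-potential of $T^+$ from Theorem \ref{main-thm-1}(2) and the mass contraction. The choice of $\delta$ must balance the $A^n$ growth against the $(d_{q-1}/d_q)^n$ decay, and we expect the key numerical condition to be that the multiplicity bound makes $A<d_q/d_{q-1}$ up to iteration. This balancing is the main obstacle. If a direct bound fails, we will first try replacing $f$ by an iterate $f^N$, for which the small-multiplicity hypothesis gives $A_N^{1/N}\to1$.

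\textbf{Part (2).} We apply part (1) to the correspondence $F=f\times f^{-1}$ acting on $X\times X$, or more directly use
\[
[\Gamma_{f^n}]=(f^n\times\mathrm{id})^*[\Delta],
\]
where $\Delta$ is the diagonal. By K\"unneth, the dominant part of the class of $[\Delta]$ is $\{T^+\}\otimes\{T^-\}$ (up to normalization), and the remaining components contract by a factor $<1$ relative to $d_q^n$. The small multiplicity assumption provides the multiplicity control needed for $F$, namely $k$-small adjoint multiplicity for $f\times\mathrm{id}$ on $X\times X$. Applying the same super-potential argument with $S=[\Delta]$, the only change is that $[\Delta]$ is not smooth. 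We handle this by pairing against smooth test forms of bidegree $(k,k)$ on $X\times X$, where $\lp [\Delta],\Psi\rp$ is finite. This yields $|\lp d_q^{-n}[\Gamma_{f^n}]-T^+\otimes T^-,\Psi\rp|\le C\|\Psi\|_{\cC^2}\theta^n$ for some $\theta<1$.
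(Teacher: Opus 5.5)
\textbf{Part (1): the exact term.} Several of your steps match the paper:
\begin{itemize}
\item the splitting $S=T^++\beta_e+R$;
\item the duality $\lp d_q^{-n}(f^n)^*R,\Phi\rp=d_q^{-n}\cU_R((f^n)_*\ddc\Phi)$ of Lemma \ref{dual};
\item the mass contraction $\|(f^n)_*\ddc\Phi\|_*\lesssim(\lambda')^n$ with $d_{q-1}<\lambda'<d_q$;
\item the H\"older exponent $\tilde\kappa^n$ for $\cU_{(f^n)_*\ddc\Phi}$, obtained from the \L{}ojasiewicz input (Corollary \ref{pf_is_hold} applied to $f^{-1}$ and iterated, Lemma \ref{n-holder}).
\end{itemize}
However, the step that closes the argument is missing, and the mechanism you propose in its place fails.

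Since $R$ is an arbitrary element of $\cD_q^0$ with no regularity, you need a bound on $\cU_R(R')$, uniform over $\|R\|_*\le 1$, that depends only mildly on the H\"older data of $R'$. This is the Skoda-type estimate, Proposition \ref{skoda}: $|\cU_R(R')|\le A(1+\eta^{-1}\log^+C)$ when $\cU_{R'}$ is $(C,\eta)$-H\"older. Apply it to $(\lambda')^{-n}(f^n)_*\ddc\Phi$, which is $(C_6(\lambda')^{-n},\tilde\kappa^n)$-H\"older. This gives $|\lp d_q^{-n}(f^n)^*R,\Phi\rp|\lesssim n\,(\lambda'/d_q)^n\,\tilde\kappa^{-n}$. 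The degeneration of the exponent thus costs exactly a factor $\tilde\kappa^{-n}$. That factor is beaten precisely when $\tilde\kappa^{-1}<d_q/d_{q-1}$, which is the definition of $q$-small adjoint multiplicity. So no passage to an iterate $f^N$ is needed, and nothing in the hypothesis would give $A_N^{1/N}\to1$ anyway.

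Your regularization at scale $\epsilon_n=e^{-\delta n}$ cannot replace this. The error it produces is of order $\epsilon_n^{c\tilde\kappa^n}=\exp(-c\delta n\tilde\kappa^n)\to 1$, so it gives no decay at all. To make that error small you must take $|\log\epsilon|\gtrsim n\tilde\kappa^{-n}$. You then need the smooth part to cost only a factor $1+|\log\epsilon|$, not a power of $\epsilon^{-1}$. That is a logarithmic control of super-potentials of arbitrary currents in $\cD^0_q$, which is exactly what Proposition \ref{skoda} encodes.

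The $\log$-H\"older continuity of $\cU_{T^+}$ plays no role here, because $T^+$ has been subtracted exactly. A minor point: there is no Green current attached to $e\in E$. The smooth term is controlled directly by formula \eqref{eq:SP-converg}, with $A$ the coordinates of $e$.

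\textbf{Part (2).} The route through $f\times\mathrm{id}$ does not work. For $q<k$ one has $d_{k-1}(f\times\mathrm{id})=d_k(f\times\mathrm{id})=d_q$. So $f\times\mathrm{id}$ has neither simple action nor the gap $d_{k-1}<d_k$ on which the mass contraction rests, and $k$-small adjoint multiplicity can never hold for it. Moreover, \emph{small multiplicity} is by definition a hypothesis on $F=(f,f^{-1})$.

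You should commit to $F$ and establish three facts:
\begin{enumerate}
\item $F$ has simple action with main degree $d_k(F)=d_q^2$. This follows from K\"unneth, with the blocks $H^{r,s}\otimes H^{k-r,k-s}$, $(r,s)\neq(q,q)$, controlled by $r_{p,q}\le\sqrt{d_pd_q}$ from Proposition \ref{D05JGA-5_7-corr}.
\item $[\Gamma_{f^n}]=(F^{n/2})^*[\Delta]$ for $n$ even, and $[\Gamma_{f^n}]=(F^{(n-1)/2})^*[\Gamma_f]$ for $n$ odd.
\item Both $[\Delta]$ and $d_q^{-1}[\Gamma_f]$ pair to $1$ with the Green current $T^-\otimes T^+$ of $F^{-1}$.
\end{enumerate}
Part (1) applied to $F$ with these two choices of $S$ then gives part (2). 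This is the content of the paper's Lemma \ref{F}.
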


The key ingredients of our proof of (1) are a Skoda-type inequality along with a precise estimate of the regularity of super-potentials of smooth exact forms after pushforward. (2) is then a direct consequence of (1) by applying it to the system $F:=(f,f^{-1})$. Precise statements can be found in Section \ref{sec:multi-equidis}.

\medskip

\noindent\textbf{Acknowledgements.} The first author has received funding from the National University of Singapore and MOE of Singapore through the grant A-8002488-00-00. A part of this work was done during the first author's visit to the Centro di Ricerca Matematica Ennio De Giorgi and the University of Pisa. He would like to thank them for their hospitality and the excellent working conditions.

\medskip

\noindent\textbf{Statements and Declarations.} The authors have no conflicts of interest to declare that are relevant to the content of this article.

\section{Preliminaries}\label{sec:pre}

\subsection{Super-potentials of currents}
Let $(X,\omega)$ be a compact K\"ahler manifold of dimension $k$. Fix an integer $0\leq q\leq k$. We denote by $\cC_q$ the convex cone of positive closed $(q,q)$-currents on $X$, by $\cD_q$ the real space generated by $\cC_q$ and by $\cD_q^0$ the subspace of currents in $\cD_q$ whose cohomology classes are $0$ in $H^{q,q}(X,\R)$. We use $\{\cdot\}$ to denote the cohomology class of a closed current.

\begin{defn}
    For any $S\in\cD_q$, the {\it $*$-norm} of $S$ is defined by
\[
    \|S\|_*:=\min \|S^+\|+\|S^-\|,
\]
where the minimum is taken over all $S^+$ and $S^-$ in $\cC_q$ such that $S=S^+-S^-$.

Let $S$ and $(S_n)$ be currents in $\cD_q$. We say that $S_n$ \emph{$*$-converge} to $S$ if $S_n$ converges to $S$ in the sense of currents and the $S_n$'s are uniformly $*$-bounded. We call the topology induced by $*$-convergence the \emph{$*$-topology}. Smooth forms are dense in $\cD_q$ and $\cD_q^0$ for the $*$-topology, see \cite[Theorem 2.4.4]{DS10JAG}.
\end{defn}

Since $X$ is compact, we can fix a finite atlas. For every $l>0$, we consider the standard $\|\cdot\|_{\mathcal{C}^l}$ norm of forms associated to that atlas. Different choices of atlases give equivalent norms. Therefore, we can fix an atlas, and hereafter every dependence on it will be omitted.

By duality, we can consider the following norms on $\mathcal{D}_q$.

\begin{defn}
    Given a current $S$ in $\mathcal{D}_q$ and $l>0$, we define
    $$\|S\|_{-l}:=\sup\{|\langle S,\Omega\rangle|\mid \Omega\text{ is a smooth }(k-q,k-q)\text{-form with }\|\Omega\|_{\mathcal{C}^l}\le1\}.$$

    The norm $\|\cdot\|_{-l}$ induces a distance $\dist_l$ given by $\dist_l(S,S'):=\|S-S'\|_{-l}$.
\end{defn}

The following result is obtained using a standard interpolation between Banach spaces \cite{T78Book}, see for instance \cite[Proposition 2.2.1]{DS10JAG}.

\begin{prop}\label{interpol}
    Let $l$ and $l'$ be real numbers with $0<l<l'$. Then, on any $\|\cdot\|_*$-bounded subset of $\mathcal{D}_q$, the topologies induced by $\dist_l$ and by $\dist_{l'}$ coincide with the weak topology and the $*$-topology. Moreover, for every $\|\cdot\|_*$-bounded subset of $\mathcal{D}_q$, there is a constant $c_{l,l'} > 0$ such that
    $$\dist_{l'} \le \dist_l \le c_{l,l'}(\dist_{l'})^{l/l'}.$$
\end{prop}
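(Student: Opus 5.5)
We would argue via the duality between the norms $\|\cdot\|_{-l}$ and the $\mathcal{C}^l$ norms on test forms, combined with classical interpolation between the Banach spaces $\mathcal{C}^l$ and $\mathcal{C}^{l'}$. The first inequality $\dist_{l'}\le\dist_l$ is immediate: since $\|\Omega\|_{\mathcal{C}^l}\le\|\Omega\|_{\mathcal{C}^{l'}}$ for $l<l'$, the unit ball of $\mathcal{C}^{l'}$ is contained in that of $\mathcal{C}^l$. Taking the supremum over a smaller set of test forms in the definition of $\|\cdot\|_{-l'}$ gives $\|S\|_{-l'}\le\|S\|_{-l}$.

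For the second inequality, we fix a $*$-bounded set, so that $\|S-S'\|_*\le M$. We first observe that for $R\in\mathcal{D}_q$ and a smooth test form $\Omega$ we have $|\langle R,\Omega\rangle|\le C\|R\|_*\|\Omega\|_{\mathcal{C}^0}$. This holds because a positive closed current has mass comparable to its $*$-norm and acts continuously on $\mathcal{C}^0$ forms, with the mass computed cohomologically against $\omega^{k-q}$. So on the bounded set, $\|R\|_{-0}\le CM$. Now take $\Omega$ with $\|\Omega\|_{\mathcal{C}^l}\le 1$ and $\epsilon>0$. By the interpolation theory in \cite{T78Book}, or more concretely by convolution in charts with a partition of unity, we can write $\Omega=\Omega_1+\Omega_2$ with
\[
\|\Omega_1\|_{\mathcal{C}^0}\lesssim\epsilon^{l},\qquad \|\Omega_2\|_{\mathcal{C}^{l'}}\lesssim\epsilon^{l-l'}.
\]
Here $\Omega_2$ is the mollification of $\Omega$ at scale $\epsilon$ and $\Omega_1=\Omega-\Omega_2$. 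Pairing $R=S-S'$ against $\Omega$ then gives
\[
|\langle R,\Omega\rangle|\lesssim M\epsilon^{l}+\epsilon^{l-l'}\dist_{l'}(S,S').
\]
Choosing $\epsilon=\dist_{l'}^{1/l'}$ balances the two terms and yields $\dist_l\le c_{l,l'}\dist_{l'}^{l/l'}$. The main technical point is this decomposition for non-integer $l$ and $l'$ on a compact manifold. We handle it by working chart by chart with a fixed partition of unity and using the standard estimates for mollifiers in Hölder spaces; these are exactly the estimates underlying the interpolation inequality $\|\cdot\|_{\mathcal{C}^l}$ between $\mathcal{C}^0$ and $\mathcal{C}^{l'}$.

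It remains to compare the topologies. The two inequalities show that $\dist_l$ and $\dist_{l'}$ induce the same topology on bounded sets. For the weak topology, we first note that $*$-bounded sets have bounded mass, hence are relatively compact in the weak topology. If $S_n\to S$ weakly, then $\langle S_n-S,\Omega\rangle\to0$ uniformly on the unit ball of $\mathcal{C}^{l}$, because this ball is relatively compact in $\mathcal{C}^0$ by Arzelà–Ascoli and the masses are bounded; hence $\dist_l(S_n,S)\to0$. Conversely, $\dist_l$-convergence gives convergence against every smooth form, which is weak convergence. Finally, on a $*$-bounded set, $*$-convergence is by definition weak convergence with uniform $*$-bounds, so it coincides with the weak topology there.
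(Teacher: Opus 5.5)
Your argument is correct and takes essentially the route the paper relies on. The paper gives no proof of its own; it only invokes interpolation between $\mathcal{C}^0$ and $\mathcal{C}^{l'}$ from \cite{T78Book}, as in \cite[Proposition 2.2.1]{DS10JAG}. Your splitting $\Omega=\Omega_1+\Omega_2$ is exactly the $K$-functional form of that interpolation, combined with the mass bound $|\langle R,\Omega\rangle|\lesssim\|R\|_*\|\Omega\|_{\mathcal{C}^0}$ and Arzel\`a--Ascoli for the comparison of topologies.

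One small technical caveat: a nonnegative even mollifier only gives $\|\Omega-\Omega_2\|_{\mathcal{C}^0}\lesssim\epsilon^{\min\{l,2\}}$. So for $l>2$ you need a compactly supported kernel with vanishing moments up to order $\lfloor l\rfloor$, or simply the abstract interpolation result you also cite.
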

\vspace*{-\baselineskip}

Now we recall some general results about super-potentials of currents on $X$. For more details, see e.g.\ \cite{BD26AM,DNV18AM,DS10JAG}. Let $\alpha=(\alpha_1,\dots,\alpha_h)$ be a fixed family of real smooth closed $(q,q)$-forms such that $\{\alpha\}=(\{\alpha_1\},\dots,\{\alpha_h\})$ is a basis of $H^{q,q}(X,\R)$. By Poincaré duality, we can also find a family of real smooth closed $(k-q,k-q)$-forms $(\check{\alpha}_1,\dots,\check{\alpha}_h)$ such that $(\{\check{\alpha}_1\},\dots,\{\check{\alpha}_h\})$ is the dual basis of $\{\alpha\}$ with respect to the cup product. For any $R\in\cD^0_{k-q+1}$, we can find a $(k-q,k-q)$-current $U_R$ such that $\ddc U_R=R$ and $\lp U_R,\alpha_i\rp=0$ for all $i$. We can choose it smooth if $R$ is smooth. We call such a $U_R$ the {\it $\alpha$-normalized potential} of $R$.

\begin{defn} \label{def_su-po}
    For a current $S\in\cD_q$, its {\it $\alpha$-normalized super-potential} is the linear functional on smooth forms $R$ in $\cD^0_{k-q+1}$ defined by
    \[
        \cU_S(R):=\lp S,U_R\rp,
    \]
    where $U_R$ is a $\alpha$-normalized potential of $R$.
\end{defn}

The above definition depends on the choice of $\alpha$ but is independent of the choice of $U_R$ as long as it is $\alpha$-normalized. In the rest of this paper, we fix the family $\alpha$. When we mention the super-potential of a current, we always assume it is $\alpha$-normalized. 
    
\begin{rmk}
    In \cite{DNV18AM}, the super-potential of $S$ is defined as follows: fix a smooth positive closed $(q,q)$-form $\beta$ which is cohomologous to $S$. Then $\cU_S(R):=\lp S-\beta,U_R\rp$ which depends on the choice of $\beta$. It is easy to check that if we take $\beta=\sum_{i=1}^h\lp S,\check{\alpha}_i\rp\alpha_i$, these two definitions coincide. 
\end{rmk}

We can consider convergence of currents in terms of their super-potentials. 

\begin{defn}
    Let $S$ and $(S_n)$ be currents in $\cD_q$ with  $\cU_S$ and $\cU_{S_n}$ being their super-potentials, respectively. We say that $S_n$ converges {\it SP-uniformly} to $S$ if $S_n\to S$ in the sense of currents and  $\cU_{S_n}$ converges uniformly to $\cU_S$ on any $*$-bounded set of smooth forms in $\cD_{k-q+1}^0$.
\end{defn}

We are interested in the regularity of the super-potential $\cU_S$ of $S\in\cD_q$ as a function on $\cD^0_{k-q+1}$ with respect to the $*$-topology and the $\|\cdot\|_{-l}$ norm.

\begin{defn}\label{defn: regularity-SP}
   Take a current $S\in\mathcal{D}_q$ with super-potential $\cU_S$ and positive constants $C$, $L$, $\eta$ and $r$. We say that
    \begin{enumerate}
        \item $S$ has a \textit{continuous super-potential} if $\mathcal{U}_S$ extends to a linear functional defined on all of $\mathcal{D}_{k-q+1}^0$ which is continuous with respect to the $*$-topology.
        
        \item $\mathcal{U}_S$ is \emph{$(C,\eta)$-H\"older-continuous} if it is continuous and we have $$|\mathcal{U}_S(R)|\le C\|R\|_{-2}^\eta\qquad\text{for every }R\in\mathcal{D}^0_{k-q+1}\text{ with }\|R\|_*\le 1.$$

        We say that $\cU_S$ is \emph{$\eta$-H\"older-continuous} if it is $(C,\eta)$-H\"older-continuous for some $C>0$.

        \item $\mathcal{U}_S$ is \emph{$(L,r)$-$\log$-H\"older-continuous} if it is continuous and we have $$|\mathcal{U}_S(R)|\le \frac{L}{(1+|\log{\|R\|_{-2}}|)^r}\qquad\text{ for every }R\in\mathcal{D}^0_{k-q+1}\text{ with }\|R\|_*\le 1.$$

        We say that $\cU_S$ is \emph{$\log^r$-continuous} if it is $(L,r)$-$\log$-H\"older-continuous for some $L>0$.
    \end{enumerate}
\end{defn}

Take $S_1\in \cD_{q_1}$ and $S_2\in\cD_{q_2}$ with $q_1+q_2\le k$. If $S_1$ has a continuous super-potential, the wedge product $S_1\wedge S_2$ is well-defined. This is a generalization of the classical theory of Bedford-Taylor \cite{BT82Acta}. By \cite[Proposition 3.4.2]{DS10JAG}, when $S_1$ and $S_2$ both have H\"older-continuous super-potentials, so does $S_1\wedge S_2$. Moreover, let $S$ and $S'$ be positive currents in $\cD_q$ such that $S'\le S$. Classical domination principles extend to super-potentials, see e.g.\ \cite[Theorem 1.1]{DNV18AM}. Here, we will need the following results, see \cite[Propositions 3.10 and 3.11]{V25Ax}.

\begin{lm} \label{DS10JAG-3_4_2}
    Take $S_1\in\mathcal{D}_{q_1}$ and $S_2\in\mathcal{D}_{q_2}$ with $q_1+q_2\le k$. Suppose $S_j$ has a $\eta_j$-H\"older-continuous super-potential for some $\eta_j>0$, $j=1,2$. Then $S_1\wedge S_2$ has a $\eta$-H\"older-continuous super-potential, where $\eta=\min\{\eta_1,1\}\cdot\min\{\eta_2,1\}/2$.
\end{lm}

\begin{lm} \label{DNV18AM-1_1}
    Let $S$ and $S'$ be currents in $\cC_q$ for some $1\le q\le k$. Assume that $S'\le S$. If $S$ has a $\eta$-H\"older-continuous super-potential for some $0<\eta\le1$, then $S'$ has a $\big(C',\eta/(50k)\big)$-H\"older-continuous super-potential. The positive constant $C'$ depends on $S$, but is independent of $S'$.
\end{lm}

Finally, we have the following Skoda-type estimate from \cite{DS10CM}. For every integer $0\le s\le k$, we denote by $\cC_s^c$, $\cD_s^c$ and $\cD_s^{0c}$ the subset of currents in, respectively, $\cC_s$, $\cD_s$ and $\cD_s^0$ with continuous super-potentials. By \cite[Remark 4.5]{DNV18AM}, for a current $T\in\cD_q^0$, the action of its super-potential $\cU_T$ can be extended to $\cD_{k-q+1}^{0c}$ by $\cU_T(R):=\cU_R(T)$ for any $R\in\cD_{k-q+1}^{0c}$.

\begin{prop}\label{skoda}
    Let $R$ be a current in $\cD_{k-q+1}^0$ with $\|R\|_*\leq 1$ whose super-potential $\cU_R$ is $(C,\eta)$-H\"older-continuous. Then there exists a constant $A>0$, independent of $R$, $\eta$ and $C$, such that the super-potential $\cU_S$ of $S$ satisfies 
        \[
            |\cU_S(R)|\leq A(1+\eta^{-1}\log^+C)
        \]
    for any $S\in\cD_q^0$ with $\|S\|_*\leq 1$, where $\log^+x:=\max\{\log{x},0\}$ for every $x>0$.
\end{prop}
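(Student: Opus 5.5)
The plan is to reduce to the case where $S$ is a positive closed current, and then compare $\cU_S(R)=\cU_R(S)$ with a quantity controlled by the Hölder modulus of $\cU_R$. Since $\|S\|_*\le 1$, write $S=S^+-S^-$ with $S^\pm\in\cC_q$ and $\|S^+\|+\|S^-\|\le 1$. Because $\{S\}=0$, the classes $\{S^+\}$ and $\{S^-\}$ coincide. Let $\beta=\sum_i\lp S^+,\check\alpha_i\rp\alpha_i$, a smooth form with bounded norm. Then
\[
\cU_S(R)=\cU_R(S^+-\beta)-\cU_R(S^--\beta),
\]
so it suffices to bound $|\cU_R(S'-\beta')|$ for a positive closed current $S'$ of mass at most $1$, where $\beta'$ is its smooth normalized cohomology representative. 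The term $\cU_R(\beta')$ is bounded by a constant times $\|R\|_*$, since $\beta'$ is a smooth form of bounded norm. We may therefore concentrate on the main term $\cU_R(S')$, suitably normalized.

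The core step is a regularization argument. Use the standard $\ep$-regularization $S'_\ep$ of $S'$ built from a family of automorphisms or a kernel (as in Dinh–Sibony), with the following properties:
\begin{itemize}
\item $S'_\ep$ is smooth and cohomologous to $S'$;
\item $\|S'_\ep\|_{\cC^2}\lesssim \ep^{-M}$ for a fixed $M$ depending only on $X$;
\item $\|S'-S'_\ep\|_{-2}\lesssim \ep$;
\item the super-potential values satisfy $\cU_R(S'_\ep)\ge \cU_R(S')-O(\log^+(1/\ep))$, or more precisely one obtains a two-sided bound in which the error is at most a multiple of $\log(1/\ep)$.
\end{itemize}
The last property is the analogue of the fact that a quasi-psh function $u$ satisfies $u_\ep\ge u-O(1)$ and $u_\ep \le \sup u$; for super-potentials this is the known estimate $|\cU_R(S'_\ep)-\cU_R(S'_{\ep'})|$ per dyadic scale being $O(1)$.

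We then combine two bounds. The Hölder hypothesis applied to $S'-S'_\ep$, after normalization by its $*$-norm (which is at most $2$), gives
\[
|\cU_R(S')-\cU_R(S'_\ep)|\le C\,2\,\ep^{\eta}.
\]
On the other hand, $|\cU_R(S'_\ep)|\lesssim \log(1/\ep)+1$, because $S'_\ep$ is smooth with controlled norm and its pairing with the normalized potential $U_R$ grows at most logarithmically. This logarithmic growth is exactly the content of the dyadic telescoping: one compares $S'_\ep$ with $S'_{1}$ through $\log_2(1/\ep)$ steps, each of which is uniformly bounded because, at a fixed scale, smoothing by a bounded kernel costs a bounded amount against a $*$-bounded $R$.

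Choosing $\ep=C^{-1/\eta}$ when $C\ge1$ (and $\ep=1$ otherwise) yields
\[
|\cU_R(S')|\le A\bigl(1+\eta^{-1}\log^+C\bigr),
\]
with $A$ independent of $R$, $C$ and $\eta$.

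The main obstacle is the uniform per-scale bound in the telescoping, namely that $|\cU_R(S'_{\ep})-\cU_R(S'_{2\ep})|=O(1)$ uniformly over $*$-bounded $R$ and positive $S'$ of mass at most $1$. To establish it, I would first write the difference as $\lp S',\,U_R^{\ep}-U_R^{2\ep}\rp$, where $U_R^{\ep}$ denotes the smoothing of $U_R$ by the adjoint kernel. I would then use that the potential kernel of $\ddc$ has singularity $\log|x-y|$-type in bidegree $(1,1)$ and $|x-y|^{2-2s}$-type otherwise. Its averaged differences at scales $\ep$ and $2\ep$ are bounded functions, and this gives the $O(1)$ bound against a positive current of bounded mass.
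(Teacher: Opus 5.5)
\textbf{The overall scheme is right, but the logarithmic bound on the regularized part is not established.} The paper does not prove this proposition; it quotes it from \cite{DS10CM}. Your outer argument is the standard one: reduce to positive $S'$ of mass $\le 1$, regularize, bound $|\cU_R(S'-S'_\ep)|\lesssim C\ep^\eta$ by the H\"older hypothesis, bound $|\cU_R(S'_\ep)|\lesssim 1+\log(1/\ep)$, and take $\ep=C^{-1/\eta}$. That bookkeeping is fine. Two small points: on a general compact K\"ahler manifold there is no family of automorphisms, but the Dinh--Sibony kernel regularization gives a difference of two smooth positive closed forms with $\cC^l$-norms $\lesssim\ep^{-M}$, which is harmless; and with the $\alpha$-normalization one simply has $\lp\beta',U_R\rp=0$. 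The whole content of the statement, however, is the logarithmic bound $|\cU_R(S'_\ep)|\lesssim 1+\log(1/\ep)$, uniform over $\|R\|_*\le 1$, and your justification of it fails.

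\textbf{Why the per-scale argument breaks.} You claim that the smoothings at scales $\ep$ and $2\ep$ of a kernel with singularity $|x-y|^{2-2s}$ differ by a bounded function. This is false for $s\ge 2$: the averages of $|z|^{2-2s}$ over balls of radii $\ep$ and $2\ep$ centred at $0$ differ by a nonzero constant times $\ep^{2-2s}$. Here $U_R$ is a potential of a current of bidegree $(k-q+1,k-q+1)$, so this occurs as soon as $q<k$.

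Size estimates then give only polynomial growth in $1/\ep$, no better than the trivial bound $|\lp S'_\ep,U_R\rp|\le\|S'_\ep\|_{\cC^0}\|U_R\|_{L^1}\lesssim\ep^{-M}$. Balanced against $C\ep^\eta$, this yields a bound of order $C^{M/(M+\eta)}$, not $A(1+\eta^{-1}\log^+C)$. Your telescoping is valid as written only when the smoothed potential is a difference of quasi-psh functions, i.e.\ for $q=k$.

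\textbf{What is missing.} You need the Dinh--Sibony logarithmic estimate for super-potentials of positive closed currents, the higher-bidegree analogue of Skoda's bound $\int u\,d\nu\ge -c(1+\log^+\|\nu\|_\infty)$. It says: for $R'\in\cC_{k-q+1}$ of mass $\le 1$ and a smooth positive closed $(q,q)$-form $S''$, suitably normalized, $|\cU_{R'}(S'')|\le A(1+\log^+\|S''\|_{\cC^l})$ for a fixed $l$. Its proof uses the kernel constructed on the blow-up of the diagonal of $X\times X$, whose singularity is genuinely logarithmic there; that logarithm, not the coefficient size $|x-y|^{2-2k}$ downstairs, produces the factor $\log(1/\ep)$. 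Apply this to $R^\pm$ and to the two positive pieces of $S'_\ep$, using $\|S'_\ep\|_{\cC^l}\lesssim\ep^{-M}$. This gives exactly the bound you need, and your argument then closes.

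\textbf{An unused but false claim.} Your fourth listed property of the regularization, a two-sided $O(\log^+(1/\ep))$ bound on $\cU_R(S'_\ep)-\cU_R(S')$ uniform in $R$, is false. At $\ep=1$ it would bound $|\cU_R(S')|$ independently of $C$. That already fails for $q=k$, with $S'$ a Dirac mass at $a$ and $R=\ddc\max(\log\dist(\cdot,a),-N)$ suitably globalized. You never use this property, so it does not affect the argument.
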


\subsection{Holomorphic correspondences}\label{sec: pullback}

Let $\pi_1$ and $\pi_2$ be the canonical projection maps from $X\times X$ to its two factors. Consider an effective $k$-cycle $\Gamma=\sum_j\Gamma_j$ which is a finite sum of irreducible analytic sets $\Gamma_j$ of dimension $k$ in $X\times X$. We assume that $\pi_1(\Gamma_j)=\pi_2(\Gamma_j)=X$ for all $j$. The {\it meromorphic correspondence} $f$ associated to $\Gamma$ is given by 
\[
    f(A):=\pi_2(\pi_1^{-1}(A)\cap\Gamma)
\]
for any set $A\subset X$. We say that $\Gamma$ is the {\it graph} of $f$. We say $f$ is a {\it holomorphic correspondence} if the {\it indeterminacy set} $I(f):=\{x\in X: \dim\pi_1^{-1}(x)\cap \Gamma>0\}$ is empty. In this paper, we only consider holomorphic correspondences. The {\it adjoint correspondence} is denoted by $f^{-1}$. Its graph is given by the image of $\Gamma$ under the involution $(x,y)\mapsto (y,x)$. Note that $f^{-1}$ may not be holomorphic, even when $f$ is. For two holomorphic correspondences $f$ and $g$ on $X$, we define their composition $f\circ g$ simply by $f\circ g(x)=f(g(x))$, counting multiplicity, for any $x\in X$. This is still a holomorphic correspondence. In particular, we can consider the $n$-th iterate $f^n$ of $f$, which is the composition of $f$ with itself for $n$ times.

\medskip

We now recall some results about the pullback and pushforward operators associated with a holomorphic correspondence. For their proofs, we refer to \cite{DNV18AM} for the case of pushforward, and to Appendix \ref{appendix} for the case of pullback. Let $f$ be a holomorphic correspondence on $X$ with graph $\Gamma$. Its pullback and pushforward actions on a current $T$ are defined by
\[
    f^*(T):=(\pi_1)_*(\pi_2^*(T)\wedge[\Gamma])\qquad\text{and}\qquad f_*(T):=(\pi_2)_*(\pi_1^*(T)\wedge[\Gamma])
\]
whenever the wedge product is meaningful. In particular, they are well-defined when $T$ is smooth. The definition of $f_*$ can be extended continuously, with respect to the convergence of currents, to a linear operator from $\cD_q$ to itself for $0\leq q\leq k$. It preserves the cone $\cC_q$ of positive closed $(q,q)$-currents and $\cD_q^0$. Thus, it defines a linear map $f_*$ on the cohomology group $H^{q,q}(X,\R)$. For every $T\in \cD_q$, we have $f_*\{T\}=\{f_*(T)\}$, and $(f^n)_*=(f_*)^n$ on both $\cD_q$ and $H^{q,q}(X,\R)$.

The definition of $f^*$ can be extended to a linear operator from $\cD_q^c$ to itself for $0\leq q\leq k$. This extension is continuous in the following sense: if a sequence $T_n$ converges SP-uniformly to $T$, then the sequence $f^*(T_n)$ converges to $f^*(T)$ in the sense of currents. We have that $f^*$ preserves $\cC_q^c$ and $\cD_q^{0c}$. Thus, it defines a linear map $f^*$ on the cohomology group $H^{q,q}(X,\R)$. For every $T\in \cD_q^c$, we have $f^*\{T\}=\{f^*(T)\}$, and $(f^n)^*=(f^*)^n$ on both $\cD_q^c$ and $H^{q,q}(X,\R)$. Moreover, the action of $f^*$ on $H^{q,q}(X,\R)$ is dual to the one of $f_*$ on $H^{k-q,k-q}(X,\R)$.

\medskip

The \textit{dynamical degree of order $q$} of $f$, denoted by $d_q(f)$, is the spectral radius of $f^*$ on $H^{q,q}(X,\R)$. Since $f^*$ preserves the cone of positive closed currents, by Perron-Frobenius theorem $d_q(f)$ is an eigenvalue for the action of $f^*$ on $H^{q,q}(X,\R)$. We will simply write $d_q$ when there is no ambiguity. In particular, $d_0$ is the number of points of $f(x)$ for any $x\in X$, counted with multiplicity, and $d_k=d_k(f)=d_0(f^{-1})$ is the number of points of $f^{-1}(x)$ for a generic $x\in X$.

Since $\omega^q$ is strictly positive, we have $\|(f^n)^*(\omega^q)\|\sim n^{m-1}d_q^n$ for some $m\ge1$. This comes from the definition of the dynamical degrees and some linear algebra facts, see Section \ref{linear-alg} and \cite[Lemma 4.1.2]{DS10JAG}. As a consequence, we get the following lemma.

 \begin{lm}\label{lemma:dyna-degree}
     Let $f$ be as above. For $0\leq q\leq k$, we have 
     \[
         d_q(f)=\lim_{n\to\infty}\left[\int_X(f^n)^*\omega^q\wedge\omega^{k-q}\right]^{1/n}=\lim_{n\to\infty}\left[\int_X\omega^q\wedge (f^n)_*\omega^{k-q}\right]^{1/n}.
     \]
 \end{lm}

When $f$ is an automorphism, the inequalities of Khovanskii-Teissier-Gromov \cite{G90ADGT,K79UMN,T79CRASP} imply that the function $s\mapsto \log d_s$ is concave. This implies the following \emph{monotonicity} condition: there are integers $0\leq p\leq p'\leq k$ such that
\[
    1=d_0<d_1<\cdots<d_p=\cdots=d_{p'}>\cdots>d_{k-1}>d_k=1.
\]
Log-concavity can be extended to the case of holomorphic correspondences provided that there exists a subsequence $(n_i)_{i\in\N}$ such that the graph of $f^{n_i}$ is irreducible for all $i\in\N$. This condition is expected to be true for a generic irreducible correspondence on $X$. However, log-concavity fails for general holomorphic correspondences on compact K\"ahler manifolds. See \cite[Theorem 1.1 \& Remark 1.5]{TTT20Crelle}.

It is then natural to ask if monotonicity still holds. This property is proven in \cite{BDR24TAMS} for H\'enon-like and polynomial-like maps, but the following example shows that it fails for general holomorphic correspondences.

\begin{ex} \label{not_mono}
    For each $s\geq 1$, let $f_s$ be a holomorphic endomorphism of $\Pb^k$ of algebraic degree $s$. It is known that $H^{q,q}(\Pb^k,\R)\cong\R$ and the action $f_s^*$ on $H^{q,q}(\Pb^k,\R)$ is just the multiplication by $s^q$. Hence the dynamical degrees satisfy $d_q(f_s)=s^q$ for $0\leq q\leq k$. By \cite[Lemma 4.7]{DNV18AM}, $f_s^{-1}$ is a holomorphic correspondence and, by duality, $d_q(f_s^{-1})=s^{k-q}$. Let $\tilde{f}$ be the holomorphic correspondence on $\Pb^k$ whose graph is given by the sum of the graphs of $f_{s_1}$ and $f_{s_2}^{-1}$. It is easy to see that we have $d_q(\tilde{f})=s_1^q+s_2^{k-q}$. It can be computed that, for many choices of $(s_1,s_2)$, the sequence $\{s_1^q+s_2^{k-q}\}_{q=0}^k$ is decreasing when $q$ is small and increasing when $k-q$ is small. 
\end{ex}

Let $S$ be a positive closed $(q,q)$-current with a continuous super-potential. Then $f^*(S)$ also has a continuous super-potential. Moreover, the next lemma gives an explicit characterization of $f^*(S)$. For the proof, see \cite[Lemma 4.6]{DNV18AM}.

\begin{lm} \label{DNV18AM_4-6}
    Let $\Omega$ be a Zariski dense open set in $X$ such that the restriction $\tau_1$ (respectively, $\tau_2$) of $\pi_1$ (respectively,  $\pi_2$) to $\Gamma\cap\pi_1^{-1}(\Omega)$ is an unramiﬁed covering (respectively, an unramiﬁed map). Let $S$ be a positive closed $(q,q)$-current on $X$ with a continuous super-potential. Then $f^*(S)$ is the extension by $0$ of $(\tau_1)_*\tau_2^*(S)$ to $X$.
\end{lm}

Let $\omega_1$ and $\omega_2$ be two smooth positive forms. In general, we do not have $f^*(\omega_1\wedge\omega_2)=f^*(\omega_1)\wedge f^*(\omega_2)$ when $f$ is a correspondence. However, a one-sided inequality is true.

\begin{lm} \label{wpb}
    Let $\omega_1$ and $\omega_2$ be two smooth positive closed forms on $X$. Then we have
    \begin{equation} \label{wedge_pullback}
    f^*(\omega_1\wedge\omega_2) \le f^*(\omega_1)\wedge f^*(\omega_2)
    \end{equation}
    in the sense of currents.
\end{lm}

\begin{proof}
    Let $\Omega,\tau_1,\tau_2$ be as in Lemma \ref{DNV18AM_4-6}. Since $\omega_1$ and $\omega_2$ are smooth, they have continuous super-potentials, and $\omega_1\wedge\omega_2$ has a continuous super-potential too. By \cite[Lemma 4.2]{DNV18AM}, $f^*(\omega_1\wedge\omega_2) $, $ f^*(\omega_1) $ and $ f^*(\omega_2)$ have no mass on $X\setminus\Omega$. So we only need to check \eqref{wedge_pullback} on $\Omega$.

    Take a point $x\in\Omega$. Since $\tau_1$ is an unramified covering, there exist an open neighbourhood $U\subseteq\Omega$ of $x$ and open subsets $V_1,\dots,V_d\subseteq\Gamma\cap\pi_1^{-1}(\Omega)$ such that $\tau_1\vert_{V_j}:V_j\longrightarrow U$ is a biholomorphism. For every $1\le j\le d$, take $\psi_j:=(\tau_1\vert_{V_j})^{-1}$. On $U$ we have
    \begin{align*}
        f^*(\omega_1\wedge\omega_2)&=(\tau_1)_*\tau_2^*(\omega_1\wedge\omega_2)=\sum_{j=1}^d\psi_j^*\tau_2^*(\omega_1\wedge\omega_2)=\sum_{j=1}^d\psi_j^*\tau_2^*(\omega_1)\wedge\psi_j^*\tau_2^*(\omega_2)\\
        &\le \sum_{j=1}^d\sum_{h=1}^d\psi_j^*\tau_2^*(\omega_1)\wedge\psi_h^*\tau_2^*(\omega_2)=\left(\sum_{j=1}^d\psi_j^*\tau_2^*(\omega_1)\right)\wedge\left(\sum_{h=1}^d\psi_h^*\tau_2^*(\omega_2)\right)\\
        &=(\tau_1)_*\tau_2^*(\omega_1)\wedge(\tau_1)_*\tau_2^*(\omega_2)=f^*(\omega_1)\wedge f^*(\omega_2),
    \end{align*}
    as desired. The proof is complete.
\end{proof}

We also have the following version of \cite[Proposition 5.8]{D05JGA} for correspondences. 

\begin{prop} \label{D05JGA-5_7-corr}
    There exists $C_0>0$ such that for every $n\ge1$ the norm $A_{p,q,n}$ of $(f^n)^*$ on $H^{p,q}(X,\C)$ satisfies
    $$A^2_{p,q,n}\le C_0 n^{m(p)+m(q)-2}d_p^nd_q^n,$$
    where $m(p)$ and $m(q)$ denote the multiplicity of the spectral radius of the action of $f^*$ on, respectively, $H^{p,p}(X,\C)$ and $H^{q,q}(X,\C)$.

    In particular, the spectral radius $r_{p,q}$ of the action of $f^*$ on $H^{p,q}(X,\C)$ satisfies
    $$r_{p,q}\le \sqrt{d_pd_q}.$$
\end{prop}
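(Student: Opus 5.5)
The plan is to follow Dinh's argument for maps in \cite[Proposition 5.8]{D05JGA}. The two inputs are the Hodge--Riemann type positivity of $(p,q)$-forms and Lemma \ref{wpb} in place of the multiplicativity $f^*(\alpha\wedge\beta)=f^*\alpha\wedge f^*\beta$, which fails for correspondences.

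\emph{Reduction to a positivity estimate.} Fix a basis of $H^{p,q}(X,\C)$ by smooth closed forms. Since all norms on this finite-dimensional space are equivalent, it suffices to bound $\|(f^n)^*\{\phi\}\|$ for a class $\{\phi\}$ represented by a smooth closed $(p,q)$-form $\phi$ of norm $\le 1$. Since $H^{p,q}$ and $H^{k-p,k-q}$ are dual under the cup product, we bound the pairings
\[
\Big|\int_X (f^n)^*\phi\wedge\psi\Big|
\]
for smooth closed $(k-p,k-q)$-forms $\psi$ of norm $\le1$. We may take $\psi$ of the form $\bar\chi\wedge\omega^{k-p-q}$ (up to Lefschetz decomposition), or simply bound by a Cauchy--Schwarz-type inequality.

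\emph{Cauchy--Schwarz step.} For a smooth $(p,q)$-form $\phi$ with $\|\phi\|\le 1$ there is a constant $A$ such that the positivity
\[
i^{(p-q)^2}\phi\wedge\bar\phi \;\le\; A\,\omega^{p}\wedge\omega^{q}
\]
holds as $(p+q,p+q)$-forms, up to the usual sign normalizations; more precisely, $\pm\mathrm{Re}$ of products such as $\phi\wedge\bar\phi\wedge\Theta$ is dominated. The key inequality to aim for is, for positive closed test forms,
\[
\Big|\int_X (f^n)^*\phi\wedge \overline{(f^n)^*\phi'}\wedge\Theta\Big|^2 \le C\Big(\int_X (f^n)^*\omega^p\wedge\Theta'\Big)\Big(\int_X (f^n)^*\omega^q\wedge\Theta''\Big).
\]
Locally on the Zariski open set $\Omega$ of Lemma \ref{DNV18AM_4-6}, $(f^n)^*\phi=\sum_j\psi_j^*\tau_2^*\phi$. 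A pointwise Cauchy--Schwarz over the branches $j$ together with the positivity above gives
\[
i^{(p-q)^2}(f^n)^*\phi\wedge\overline{(f^n)^*\phi}\le A\,d_0^n\sum_j\psi_j^*\tau_2^*(\omega^p\wedge\omega^q).
\]
Here I expect a factor from the number $d_0^n$ of branches to appear. By Lemma \ref{wpb}, $\sum_j\psi_j^*\tau_2^*(\omega^p\wedge\omega^q)\le (f^n)^*\omega^p\wedge(f^n)^*\omega^q$. To avoid the $d_0^n$ loss, one should instead pair differently: write the mass of $(f^n)^*\phi$ against $\omega^{k-p}\wedge\omega^{k-q}$-type forms by polarizing, i.e.\ estimate $|\langle (f^n)^*\phi,\bar\chi\rangle|$ through $\int \sum_j|\psi_j^*\tau_2^*\phi|\,|\chi|$. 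Then bound $|\psi_j^*\tau_2^*\phi|\le (\psi_j^*\tau_2^*\omega^p)^{1/2}(\psi_j^*\tau_2^*\omega^q)^{1/2}$ in the sense of trace measures, and apply Cauchy--Schwarz in the sum over $j$ and in the integral simultaneously. This gives
\[
\big|\langle (f^n)^*\phi,\chi\rangle\big|^2\le C\,\|(f^n)^*\omega^p\|\,\|(f^n)^*\omega^q\|.
\]
This step is the main obstacle, because it requires making the pointwise Cauchy--Schwarz inequality for mixed-degree forms rigorous (in Dinh's proof it comes from the positivity of $i^{p-q}$-twisted products). Here it must additionally be done branchwise and summed without losing the multiplicity; the joint Cauchy--Schwarz over $\sum_j\int$ is what avoids that loss. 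The currents have no mass on $X\setminus\Omega$, so integration over $\Omega$ suffices.

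\emph{Conclusion.} By the remark before Lemma \ref{lemma:dyna-degree}, $\|(f^n)^*\omega^p\|\lesssim n^{m(p)-1}d_p^n$ and similarly for $q$. Taking the supremum over unit $\chi$ gives $A_{p,q,n}^2\le C_0n^{m(p)+m(q)-2}d_p^nd_q^n$. The spectral radius bound follows from $r_{p,q}=\lim A_{p,q,n}^{1/n}$, since the polynomial factor disappears after taking $n$-th roots.
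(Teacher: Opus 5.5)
Your final argument (a sheetwise bound followed by a joint Cauchy--Schwarz over sheets and over $X$) is correct. It is a genuinely different route from the paper's, and the step you flag as the main obstacle is only linear algebra.

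\textbf{Closing the flagged step.} Let $\Omega_n$ be the Zariski open set of Lemma~\ref{DNV18AM_4-6} for $\Gamma_{f^n}$, and let $g_j:=\tau_2\circ\psi_j$ be the local branches. At $x\in\Omega_n$, choose unitary frames at $x$ and at $g_j(x)$ giving a singular value decomposition $dg_j(x)e_i=\sigma_ie_i'$. Then $g_j^*(dz'_I\wedge d\bar z'_J)=\sigma_I\sigma_J\,dz_I\wedge d\bar z_J$ with $\sigma_I:=\prod_{i\in I}\sigma_i$, so $|g_j^*\phi|\le\|\phi\|_\infty\max_{|I|=p}\sigma_I\max_{|J|=q}\sigma_J$. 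On the other hand, $g_j^*\omega^s\wedge\omega^{k-s}=\binom{k}{s}^{-1}e_s(\sigma_1^2,\dots,\sigma_k^2)\,\omega^k\ge\binom{k}{s}^{-1}\max_{|I|=s}\sigma_I^2\,\omega^k$, where $e_s$ is the $s$-th elementary symmetric polynomial.

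Write $a_j\omega^k:=g_j^*\omega^p\wedge\omega^{k-p}$ and $b_j\omega^k:=g_j^*\omega^q\wedge\omega^{k-q}$. The two facts above give $|g_j^*\phi|\lesssim\|\phi\|_\infty(a_jb_j)^{1/2}$, with a constant depending only on $k$. Since $\sum_j(a_jb_j)^{1/2}$ is symmetric in the sheets, it is a well-defined function on $\Omega_n$. Apply Cauchy--Schwarz in $\ell^2$ and in $L^2(\omega^k)$, and use $\sum_ja_j\,\omega^k=(f^n)^*\omega^p\wedge\omega^{k-p}$ on $\Omega_n$. This yields your bound with a constant independent of $n$.

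For non-positive $\phi$, the absence of mass on $X\setminus\Omega_n$ needs a reason. The measure $[\Gamma_{f^n}]\wedge\pi_2^*\phi\wedge\pi_1^*\chi$ is dominated by the trace measure of $[\Gamma_{f^n}]$. That trace measure does not charge the proper analytic subset $\Gamma_{f^n}\cap\pi_1^{-1}(X\setminus\Omega_n)$.

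Neither the Hodge--Riemann positivity nor Lemma~\ref{wpb}, which you announce as your two inputs, is actually used. You should remove the ``key inequality'' involving $(f^n)^*\phi\wedge\overline{(f^n)^*\phi'}$, which is a product of currents you never define, and the lossy branchwise estimate.

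\textbf{Comparison with the paper.} The paper stays on the graph. For $p>q$ it writes $\varphi=\theta\wedge\Omega$ and $\psi=\theta'\wedge\Omega'$, with $\theta$ of type $(p-q,0)$, $\theta'$ of type $(0,p-q)$, and $\Omega,\Omega'$ positive. It applies Cauchy--Schwarz to the positive current $[\Gamma_n]\wedge\pi_2^*\Omega\wedge\pi_1^*\Omega'$ on $X\times X$ to get $|\langle (f^n)^*\varphi,\psi\rangle|^2\le|\langle (f^n)^*\tilde\Omega,\Omega'\rangle|\,|\langle (f^n)^*\Omega,\tilde\Omega'\rangle|$, where $\tilde\Omega=\theta\wedge\bar\theta\wedge\Omega$ and $\tilde\Omega'=\theta'\wedge\bar\theta'\wedge\Omega'$. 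It then dominates these positive forms by powers of $\omega$.

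Because positivity is used on $\Gamma_n$ itself, the paper needs no covering structure, exceptional set or frames, and the factor $d_0^n$ you worried about never appears. The price is the decomposition of forms into such products and the Cauchy--Schwarz inequality for positive currents.

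Your version uses the covering structure of holomorphic correspondences and pointwise singular values instead. It treats all $(p,q)$ at once. It also makes explicit that the constant depends on $\phi$ and $\chi$ only through their sup norms, so it bounds the mass of $(f^n)^*\phi$ directly. Both proofs then conclude in the same way from $\|(f^n)^*\omega^s\|\lesssim n^{m(s)-1}d_s^n$.
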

\vspace{-2\baselineskip}

\begin{proof}
    We only have to prove the assertion for $q\not=p$. We do only the case $p>q$. The other case can be treated in the same way. Let $\varphi$ be a smooth $(p,q)$-form on $X$. By Poincaré duality, it suffices to give an estimate of $|\langle(f^n)^*(\varphi),\psi\rangle|^2$, where $\psi$ belongs to a fixed finite set (that depends only on $X$) of smooth $(k-p,k-q)$-forms. We can assume $\varphi=\theta\wedge\Omega$ and $\psi=\theta'\wedge\Omega'$, where $\theta$ and $\theta'$ are smooth forms of bidegrees $(p-q,0)$ and $(0,p-q)$ respectively, and $\Omega$ and $\Omega'$ are smooth positive forms of bidegrees $(q,q)$ and $(k-p,k-p)$ respectively. Indeed, $\varphi$ and $\psi$ can be written as finite sums of forms of the considered types. Define $\tilde{\Omega}:=\theta\wedge\bar{\theta}\wedge\Omega$ and $\tilde{\Omega}':=\theta'\wedge\bar{\theta}'\wedge\Omega'$. The form $\tilde{\Omega}$ is of bidegree $(p,p)$ and the form $\tilde{\Omega}'$ is of bidegree $(k-q,k-q)$. By the Cauchy-Schwarz inequality, we have
    \begin{align} \label{cs-applied}
        |\langle(f^n)^*(\varphi),\psi\rangle|^2&=|\lp [\Gamma_n],\pi_2^*(\theta\wedge\Omega)\wedge\pi_1^*(\theta'\wedge\Omega')\rp|^2\nonumber\\
        &\le |\lp [\Gamma_n],\pi_2^*(\tilde\Omega)\wedge\pi_1^*(\Omega')\rp|\cdot|\lp [\Gamma_n],\pi_2^*(\Omega)\wedge\pi_1^*(\tilde\Omega')\rp|\nonumber\\
        &=|\langle(f^n)^*(\tilde\Omega),\Omega'\rangle||\langle(f^n)^*(\Omega),\tilde\Omega'\rangle|.
    \end{align}
    Since $\Omega,\Omega',\tilde\Omega$ and $\tilde\Omega'$ are positive smooth forms, we have
    \begin{align} \label{Omega-omega}
        |\langle(f^n)^*(\tilde\Omega),\Omega'\rangle||\langle(f^n)^*(\Omega),\tilde\Omega'\rangle|&\lesssim|\langle(f^n)^*(\omega^p),\omega^{k-p}\rangle|\cdot|\langle(f^n)^*(\omega^q),\omega^{k-q}\rangle|\nonumber\\
        &=\|(f^n)^*(\omega^p)\|\cdot\|(f^n)^*(\omega^q)\|\lesssim n^{m(p)+m(q)-2}d_p^nd_q^n.
    \end{align}
    Combining \eqref{cs-applied} and \eqref{Omega-omega}, we get
    $$|\langle(f^n)^*(\varphi),\psi\rangle|^2\lesssim n^{m(p)+m(q)-2}d_p^nd_q^n,$$
    where the implicit constant depends on $\psi$ and $\varphi$. The assertion follows.
\end{proof}

\subsection{Non-invertible linear maps}\label{linear-alg}

To end this section, we state some linear algebra results that we will need in the sequel. This is a version of \cite[Section 2.2]{DS05JAMS} in the non-invertible case. Let $L$ be a (not necessarily invertible) linear transformation of a real vector space $V$ of dimension $h$. Let $\cK$ be a closed convex cone with non-empty interior which generates $V$ and satisfies $\cK\cap -\cK=\{0\}$. Suppose $L(\cK)\subset \cK$.

Consider the complexification $V^\C=V\otimes_\R\C$ and the extension of $L$ on $V^\C$. We can find a complex basis such that the matrix of $L$ is given by its Jordan canonical form. More precisely, there exist invariant subspaces $V_i^\C\subseteq V^\C$ such that we have the decomposition
\[
    V^\C=\bigoplus_{i=1}^r V_i^\C,
\]
and the matrix of $L|_{V_i^\C}$ is given by a Jordan block $J_{m_i,\lambda_i}$ of size $m_i$ and eigenvalue $\lambda_i$. We can arrange the order of these blocks such that for all $1\leq i\leq r-1$, we either have $|\lambda_i|>|\lambda_{i+1}|$ or $|\lambda_i|=|\lambda_{i+1}|$ and $m_i\geq m_{i+1}$. Then $\lambda:=|\lambda_1|$ is the {\it spectral radius} of $L$ and is in fact an eigenvalue by Perron-Frobenius theorem. The integer $m:=m_1$ is called the {\it multiplicity of the spectral radius}. We have $\|L^n\|\sim n^{m-1}\lambda^n$. Notice that since $L$ is not necessarily invertible, some Jordan blocks may correspond to the eigenvalue $0$. Such blocks are nilpotent, and in our chosen ordering they are placed in the lower-right corner of the Jordan form.

The eigenspace of $L|_{V_i^\C}$ is given by a complex line denoted by $F_i^\C$. We say a Jordan block $J_{m_i,\lambda_i}$ is {\it dominant} if $(m_i,|\lambda_i|)=(m,\lambda)$. Suppose the dominant Jordan blocks are $J_{m_1,\lambda_1},\dots,J_{m_\nu,\lambda_\nu}$. Define
$$F^\C=\bigoplus_{i=1}^\nu F_i^\C\qquad\text{and}\qquad H^\C=\bigoplus_{1\leq i\leq \nu,\lambda_i=\lambda}F_i^\C.$$
Then the {\it dominant eigenspace} $F$ and {\it strictly dominant eigenspace} $H$ are defined by
$$F=F^\C\cap V\qquad\text{and}\qquad H=H^\C\cap V$$
respectively. They are both invariant by $L$.

For each $1\leq i\leq \nu$, there exists a unique $\theta_i\in\Sb^1$ such that $\lambda_i=\lambda\exp(2\pi\theta_i)$. Define $\theta:=(\theta_1,\dots,\theta_\nu)\in\Sb^\nu$ to be the {\it dominant direction} of $L$. Define the normalized operator $\Lambda_n:=n^{1-m}\lambda^{-n}L^n$ for each $n\geq 1$. The following result can be proven by considering the powers of Jordan blocks of $L$. 

\begin{prop}\label{surj-linear}
    Any subsequence $(\Lambda_{n_i})_{i\in\N}$ converges if and only if $(n_i\theta)$ converges. Take $a\in V$. If $(\Lambda_{n_i}a)$ converges to some $c$, then $c\in F$. Any limit of $(\Lambda_n)$ is a surjective linear map from $V$ to $F$.
\end{prop}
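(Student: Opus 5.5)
The plan is to reduce everything to the Jordan decomposition of $L$ on $V^\C$ and compute the normalized powers block by block. On a block $V_i^\C$ with $L|_{V_i^\C}=\lambda_i I+N$, where $N$ is nilpotent with $N^{m_i}=0$, the binomial formula gives
\[
L^n|_{V_i^\C}=\sum_{j=0}^{m_i-1}\binom{n}{j}\lambda_i^{n-j}N^j .
\]
Multiplying by $n^{1-m}\lambda^{-n}$, I will sort the blocks into two kinds.

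\textbf{Non-dominant blocks.} Suppose either $|\lambda_i|<\lambda$, or $|\lambda_i|=\lambda$ with $m_i<m$. Each coefficient is bounded by $n^{m_i-1}|\lambda_i|^{n}\lambda^{-n}\cdot n^{1-m}$ times a constant, so it tends to $0$. For nilpotent blocks ($\lambda_i=0$), $L^n$ vanishes on the block once $n\ge m_i$. Hence on these blocks $\Lambda_n\to0$.

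\textbf{Dominant blocks} ($i\le\nu$). Only the top term $j=m-1$ survives. It equals
\[
\frac{1}{(m-1)!}\,\lambda_i^{n-m+1}\lambda^{-n}N^{m-1}+o(1)=\frac{\lambda^{1-m}\lambda_i^{\,1-m}}{(m-1)!}\cdot\frac{\lambda_i^{n}}{\lambda^{n}}\,\lambda^{m-1}N^{m-1}+o(1),
\]
and $\lambda_i^n/\lambda^n=e^{2\pi i n\theta_i}$ (reading the exponent in the statement as $e^{2\pi i\theta_i}$). So on $V^\C$ we get
\[
\Lambda_n=\sum_{i\le\nu}c_i\,e^{2\pi i n\theta_i}P_i+o(1),
\]
where the $c_i\neq0$ are constants and $P_i=N_i^{m-1}\circ(\text{projection onto }V_i^\C)$ is a nonzero rank-one operator with image $F_i^\C$. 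The $P_i$ act on distinct summands of a direct sum, so they are linearly independent. It follows that $\Lambda_{n_i}$ converges if and only if each $e^{2\pi i n_i\theta_j}$ converges, i.e. if and only if $(n_i\theta)$ converges in $\Sb^\nu$. This proves the first claim.

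\textbf{Image of a limit.} If $\Lambda_{n_i}a\to c$ with $a\in V$, then $c$ is real because each $\Lambda_n$ is real. The expansion shows that $c$ is a limit of vectors in $F^\C+o(1)$; since $F^\C$ is closed, $c\in F^\C\cap V=F$. I have to be careful here: this step only uses convergence of $\Lambda_{n_i}a$, not of the operators. The $o(1)$ part tends to $0$, and the main part lies in $F^\C$ for every $n$, so the limit lies in $F^\C$.

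\textbf{Surjectivity.} Let $\Lambda$ be a limit of $\Lambda_{n_i}$. Then $\Lambda=\sum_{j\le\nu}c_j\zeta_jP_j$ with $|\zeta_j|=1$, so every coefficient is nonzero. Each $P_j$ maps $V_j^\C$ onto $F_j^\C$, so $\Lambda(V^\C)=F^\C$. Because $\Lambda$ is real, $\Lambda(V)$ is a real form of $\Lambda(V^\C)$, with $\dim_\R\Lambda(V)=\dim_\C\Lambda(V^\C)$. The space $F$ is itself a real subspace of $F^\C$ invariant under conjugation: the dominant blocks come in conjugate pairs, so $F^\C$ is conjugation-stable. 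Hence $\dim_\R F=\dim_\C F^\C$, which gives $\Lambda(V)=F$.

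The main obstacle I expect is exactly this real-versus-complex bookkeeping: checking that $F^\C$ is conjugation-invariant, so that $F$ has the full real dimension. This follows because complex conjugation permutes Jordan blocks while preserving size and modulus. The cone $\cK$ is not needed beyond guaranteeing that $\lambda$ is an eigenvalue, which ensures $\nu\ge1$.
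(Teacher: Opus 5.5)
Your proof is correct and follows the route the paper indicates: the paper only says the result follows by computing the powers of the Jordan blocks of $L$, which is what you carry out, and you read $\lambda_i=\lambda e^{2\pi\sqrt{-1}\theta_i}$ correctly. The real-versus-complex step you flag as the main obstacle is in fact automatic: $\Lambda(V)\subseteq F^\C\cap V=F$, and $\dim_\R F\le\dim_\C F^\C=\dim_\C\Lambda(V^\C)=\dim_\R\Lambda(V)$ always holds, so $\Lambda(V)=F$ follows without arguing that conjugation permutes the chosen Jordan blocks (which is not literally true for an arbitrary Jordan decomposition, although $F^\C$ is conjugation-stable because it equals $\Lambda(V^\C)$ with $\Lambda$ real).
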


 Let $f$ be a holomorphic correspondence on a compact K\"ahler manifold $(X,\omega)$ of dimension $k$. In the next section, we will apply the results of the present section to the action of $f$ to the cohomology groups of $X$. Namely, we will let $L=f^*$ acting on $V=H^{q,q}(X,\R)$ and $\cK$ be the cone of classes in $H^{q,q}(X,\R)$ which can be represented by a positive closed $(q,q)$-form. Since $f^*$ preserves positive closed currents and its action passes in cohomology, it also preserves $\cK$.

\section{Construction and regularity of Green currents}\label{sec:constr_Green}

Throughout all this section, let $f$ be a holomorphic correspondence on a compact K\"ahler manifold $(X,\omega)$ of dimension $k$. We also fix an integer $1\leq q\leq k$ such that $d_{q-1}(f)<d_q(f)$. In Section \ref{cons_of_green} we construct the Green currents associated with $f$. In Section \ref{reg_of_green} we will discuss the precise regularity of their super-potentials.

\subsection{Construction of Green currents} \label{cons_of_green}
We adapt the proof of \cite[Theorem 4.2.1]{DS10JAG}. In particular, we need to handle the difficulties given by the non-invertibility of $f$.

\begin{thm}\label{thm: green-exist}
    Let $F$ denote the real dominant subspace of $f^*$ on $H^{q,q}(X,\R)$. Then each class $c\in F$ has a representative $T_c\in\cD_q$ with a continuous super-potential which depends linearly on $c$ and satisfies $f^*(T_c)=T_{f^*(c)}$. 
\end{thm}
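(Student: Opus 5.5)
We follow the scheme of \cite[Theorem 4.2.1]{DS10JAG} and define the Green currents through an explicit series for their super-potentials. Fix the normalization forms $\alpha=(\alpha_1,\dots,\alpha_h)$. For every class $c\in H^{q,q}(X,\R)$ let $\alpha_c:=\sum_i c_i\alpha_i$ be its smooth representative in the span of $\alpha$, which depends linearly on $c$. Since $f^*(\alpha_c)$ is smooth, or at least in $\cD_q^c$, and cohomologous to $\alpha_{f^*c}$, the difference
\[
 f^*(\alpha_c)-\alpha_{f^*(c)}
\]
lies in $\cD_q^{0c}$. It also depends linearly on $c$.

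For $c\in F$ we want to solve $f^*(T_c)=T_{f^*c}$ by writing $T_c=\alpha_c+S_c$ with $S_c\in\cD_q^{0}$. The equation becomes
\[
 S_{f^*c}=f^*(S_c)+\bigl(f^*(\alpha_c)-\alpha_{f^*c}\bigr).
\]
We want to invert this, rather than iterate forward. Since $f^*$ need not be invertible on $F$, we first check that $f^*|_F$ is invertible. By definition $F$ is spanned by eigenvectors of eigenvalues of modulus $d_q>0$, so $f^*|_F$ is an isomorphism. Using $(f^*|_F)^{-1}$ we iterate and formally set
\[
 S_c:=\sum_{n\ge1}(f^{n-1})^*\bigl(f^*(\alpha_{c_n})-\alpha_{c_{n-1}}\bigr),\qquad c_n:=(f^*|_F)^{-n}c,
\]
so that $c_0=c$. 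Telescoping gives $\alpha_c+S_c=\lim_N (f^N)^*(\alpha_{c_N})$, which is the natural candidate. Note that $\|c_N\|\lesssim d_q^{-N}$, with no polynomial factor because on $F$ only eigenvectors appear. Convergence will therefore follow once the terms decay geometrically at a rate strictly below $d_q$.

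The key estimate is on super-potentials. Let $\Phi_c:=f^*(\alpha_c)-\alpha_{f^*c}\in\cD^{0c}_q$, with $\|\Phi_c\|_*\lesssim\|c\|$. We want to show
\[
 \bigl|\cU_{(f^{n})^*\Phi}(R)\bigr|\le C\,(d_{q-1}+\epsilon)^n\,\|\Phi\|_*
\]
uniformly for $R$ in a $*$-bounded set of $\cD^0_{k-q+1}$. Here $\cU_{(f^n)^*\Phi}(R)=\lp\Phi,(f^n)_*U_R\rp$ up to normalization corrections. Following DS, we write $(f^n)^*\Phi$ as $\ddc$ of $(f^n)^*$ applied to a $(q-1,q-1)$ potential of $\Phi$. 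The action of $(f^n)^*$ on $(q-1,q-1)$-currents grows like the mass growth $\|(f^n)^*\omega^{q-1}\|\lesssim n^{m'}d_{q-1}^n$. We bound this using Lemma \ref{lemma:dyna-degree} together with the positivity and domination of pullbacks of smooth positive forms, in the style of Lemma \ref{wpb}. The $\alpha$-normalization term only contributes a cohomological correction, which vanishes because $\Phi$ is exact.

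Summing, the $n$-th term of $\cU_{S_c}$ is bounded by $n^{m'}d_{q-1}^n d_q^{-n}\|c\|$. Since $d_{q-1}<d_q$, this is a geometric series. Hence the partial sums converge uniformly on $*$-bounded sets, and by \cite[Remark 4.5]{DNV18AM} each term has a continuous super-potential. Consequently $S_c$ exists in $\cD^0_q$, has a continuous super-potential, and is linear in $c$. It remains to write $T_c$ as a difference of positive currents. This follows because $T_c$ is a limit of $(f^N)^*\alpha_{c_N}$, whose $*$-norms are bounded by $\|(f^N)^*\|\cdot\|c_N\|\lesssim 1$, so $T_c\in\cD_q$.

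Finally we verify invariance. We pass $f^*$ through the series using its continuity under SP-uniform convergence (Section \ref{sec: pullback}). Reindexing with $c\mapsto f^*c$, so that $(f^*c)_n=c_{n-1}$, yields $f^*T_c=T_{f^*c}$. The main obstacle is the uniform super-potential estimate for $(f^n)^*$ on exact currents with rate $d_{q-1}$: for correspondences the pullback is only defined on $\cD^c$, and there is no invertible inverse map to transport potentials. I would handle this by dualizing to $(f^n)_*$ acting on $U_R$, which is defined on all of $\cD$, and controlling $\|(f^n)_*U_R\|$ through its mass against $\omega^{q-1}$.
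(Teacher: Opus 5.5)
Your construction is correct in its core, and once unwound it gives exactly the paper's series. Since $\cU_{\alpha_b}=0$ for every class $b$, you have $\cU_{\Phi_{c_n}}=\cU_{f^*(\alpha_{c_n})}$, so your super-potential is $\sum_{l\ge0}\cU_{f^*(\alpha_{c_{l+1}})}\circ(f^l)_*$, which is \eqref{green_superpot_formula}. Building $T_c$ by backward iteration on $F$ and proving invariance by reindexing $c\mapsto f^*(c)$ is a clean alternative to the paper's route through Proposition \ref{prop: equi_cont-SP}.

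\textbf{The gap: $T_c\in\cD_q$ when $m\ge2$.} In that case $\|(f^N)^*\|\sim N^{m-1}d_q^N$ while $\|c_N\|\sim d_q^{-N}$, so your bound is $\lesssim N^{m-1}$, not $\lesssim 1$. Concretely, the natural decomposition $\alpha_{c_N}=(\alpha_{c_N}+C\|c_N\|\omega^q)-C\|c_N\|\omega^q$ gives masses of order $\|c_N\|\cdot\|(f^N)^*\omega^q\|\sim N^{m-1}$ after pulling back. The fact that $c_N\in F$ does not help, because $\{\omega^q\}\notin F$ has components along entire Jordan chains.
\begin{itemize}
\item The case is not vacuous under $d_{q-1}<d_q$. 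For an elliptic curve $E$ and $n\ge2$, the endomorphism $(z,w)\mapsto(nz+w,nw)$ of $E\times E$ has $d_0=1<d_1=n^2$ and $m=3$.
\item Continuity of $\cU_{T_c}$ cannot replace the missing bound. For $q=1$ it only says $T_c=\alpha_c+\ddc u$ with $u$ continuous, and such $u$ need not be a difference of quasi-psh functions.
\end{itemize}

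\textbf{The fix is the paper's device.} By Proposition \ref{surj-linear}, choose $a$ with $\Lambda_{n_i}a\to c$. The currents $n_i^{1-m}d_q^{-n_i}(f^{n_i})^*\alpha_a$ are $*$-bounded. Your own telescoping gives $(f^n)^*\alpha_a=\alpha_{(f^*)^na}+\sum_{l=0}^{n-1}(f^l)^*\Phi_{(f^*)^{n-l-1}a}$, which is \eqref{eq:SP-converg}. Combine this with $n_i^{1-m}d_q^{-n_i}(f^*)^{n_i-l-1}a\to(f^*|_F)^{-l-1}c$ and dominated convergence (domination by $(\delta/d_q)^l$). This shows these currents converge to your $T_c$, which therefore lies in $\cD_q$. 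For $m=1$ your argument is complete as it stands.

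\textbf{The key estimate also needs restating, although your target rate is right.}
\begin{itemize}
\item Controlling $(f^n)_*U_R$ by a mass does not work. It is a $(k-q,k-q)$-current, so its mass is tested against $\omega^q$ and grows like $d_q^n$. Moreover $\Phi$ is not a bounded form, so $\lp\Phi,(f^n)_*U_R\rp$ is not controlled by that mass in any case.
\item What you need is the identity $\cU_{(f^n)^*\Phi}(R)=\cU_\Phi((f^n)_*R)$ for exact $\Phi$ with continuous super-potential. Neither side is a pairing of smooth objects, so it must be proved by approximating $[\Gamma]$ by smooth closed forms, as in \eqref{pb_cont_3}.
\item Combine it with $\|(f^n)_*R\|_*\lesssim\delta^n\|R\|_*$ for $d_{q-1}<\delta<d_q$. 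This holds because the mass of a positive closed $(k-q+1,k-q+1)$-current is cohomological, tested against $\omega^{q-1}$, and $(f^*)^n$ on $H^{q-1,q-1}(X,\R)$ grows like $n^{m'-1}d_{q-1}^n$.
\item The constant must be $\sup_{\|R'\|_*\le1}|\cU_{\Phi_c}(R')|$. It is finite by continuity of $\cU_{\Phi_c}$ and compactness of the $*$-unit ball, and it is $\lesssim\|c\|$ by linearity in $c$. A bound $C\|\Phi\|_*$ with $C$ independent of $\Phi$ is false already for $n=0$.
\end{itemize}
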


 To prove the theorem, we will use the following equidistribution result for closed currents with continuous super-potentials, which is the generalization of \cite[Proposition 4.2.2]{DS10JAG} to correspondences. The strategy of the proof is similar, but since in our case $f$ is not invertible, the proof of equality \eqref{eq:SP-converg} below is more delicate and requires some approximation arguments.

\begin{prop} \label{prop: equi_cont-SP}
    Let $S$ be a current in $\cD_q$ with a continuous super-potential. If the sequence $n_i^{1-m}d_q^{-n_i}(f^{n_i})^*\{S\}$ converges to some class $c$, then $n_i^{1-m}d_q^{-n_i}(f^{n_i})^*(S)$ converges SP-uniformly to some $T_c\in \cD_q$ which only depends on $c$.
\end{prop}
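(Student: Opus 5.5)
Following the strategy of \cite[Proposition 4.2.2]{DS10JAG}, I would write the normalized pullback via super-potentials. Set $\Lambda_n:=n^{1-m}d_q^{-n}(f^n)^*$ and $S_n:=\Lambda_n(S)$. The first step is a cohomology-adjusted telescoping identity. Write $S=\beta+(S-\beta)$ with $\beta:=\sum_i\lp S,\check\alpha_i\rp\alpha_i$, so that $S-\beta\in\cD_q^{0}$ has a continuous super-potential, namely $\cU_S$. For a smooth $R\in\cD^0_{k-q+1}$ we have $f_*R\in\cD^0_{k-q+1}$. The key relation to establish is
\begin{equation}\label{eq:SP-converg}
\cU_{f^*(S)}(R)=\cU_S(f_*R)+\cU_{f^*(\beta_S)}(R),
\end{equation}
where $\beta_S$ is the smooth form cohomologous to $S$ built from $\alpha$ above. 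Once this holds, iteration expresses $\cU_{S_n}(R)$ as $n^{1-m}d_q^{-n}\cU_S((f^n)_*R)$ plus a sum of terms of the form $n^{1-m}d_q^{-n}\cU_{f^*(\alpha_i)}((f^{j})_*R)$, with coefficients coming from the classes $(f^{n-j-1})^*\{S\}$.

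The convergence then comes from comparing degrees. Since $(f^j)_*$ acts on $\cD^0_{k-q+1}$ with $*$-norm growth $\lesssim j^{m'}d_{q-1}^j$ (Lemma \ref{lemma:dyna-degree} applied to $f^{-1}$-type masses, i.e.\ $\|(f^j)_*R\|_*\lesssim \|R\|_*\,\|(f^j)_*\omega^{k-q+1}\|$), and the super-potentials of $S$ and of the fixed forms $f^*(\alpha_i)$ are continuous, hence bounded on $*$-bounded sets, each term is controlled. The first term is $O(n^{m'}d_{q-1}^nd_q^{-n})\to0$ uniformly on $*$-bounded sets. The series is dominated by $\sum_j j^{m'}(d_{q-1}/d_q)^j$ times the norms of the coefficient classes $n^{1-m}d_q^{-n}(f^{n-j-1})^*\{S\}$. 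These norms are uniformly bounded by $C\,d_q^{-j}$ up to polynomial factors, because $\|L^n\|\sim n^{m-1}d_q^n$. By Proposition \ref{surj-linear} and the hypothesis, the coefficients converge along $n_i$ to limits determined by $c$; more precisely $\Lambda_{n_i-j-1}$ converges along the subsequence because $n_i\theta$ converges, and these limits are fixed linear images of $c$ for each $j$. Dominated convergence then gives uniform convergence of $\cU_{S_{n_i}}$ to an explicit functional $\cU_c(R)=\sum_j\sum_i(\text{coefficient depending on } c)\,\cU_{f^*\alpha_i}((f^j)_*R)$, which depends only on $c$. The currents $S_{n_i}$ are $*$-bounded, since masses are controlled by cohomology for positive parts, so any limit current $T_c$ has the class $c$ and super-potential $\cU_c$. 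Since a current in $\cD_q$ is determined by its class and its super-potential, $S_{n_i}\to T_c$, and this is SP-uniform convergence.

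The main obstacle is \eqref{eq:SP-converg}, i.e.\ the duality $\lp f^*(S),U_R\rp=\lp S,f_*U_R\rp$ together with the fact that $f_*U_R$ is a potential of $f_*R$, up to an $\alpha$-normalization correction. For smooth $S$ this is Fubini on $\Gamma$, and the correction is exactly the $\beta$ term. For $S$ with merely continuous super-potential, $f^*S$ is defined only by extension and $f_*U_R$ is not smooth. I would approximate $S$ by smooth forms $S^\epsilon$ converging SP-uniformly, using the $*$-density of smooth forms together with continuity of $\cU_S$ (regularize $S^\pm$ and check uniform convergence of super-potentials). Then $f^*S^\epsilon\to f^*S$ as currents by the stated continuity of $f^*$, while $\cU_S^\epsilon(f_*R)\to\cU_S(f_*R)$ because $f_*R$ is a fixed $*$-bounded element in $\cD^{0}_{k-q+1}$. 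Passing to the limit in the smooth identity yields \eqref{eq:SP-converg}.
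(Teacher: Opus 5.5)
Your overall architecture is the paper's. You use the one-step identity $\cU_{f^*(S)}=\cU_S\circ f_*+\cU(\cdot)A$, iterate it into \eqref{eq:SP-converg}, bound $\|(f^l)_*R\|_*\lesssim\delta^l\|R\|_*$ with $d_{q-1}<\delta<d_q$, and arrive at the limit functional \eqref{green_superpot_formula}.

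The gap is in the step you yourself single out. You get the one-step identity by approximating $S$ by smooth forms $S^\epsilon$ converging SP-uniformly. This does not follow from $*$-density of smooth forms plus continuity of $\cU_S$, because $*$-convergence gives no uniform control of super-potentials. Already for $q=1$, $*$-convergence amounts to $L^1$-convergence of the normalized quasi-potentials, whereas SP-uniform convergence amounts to their uniform convergence. Regularizing $S^\pm$ separately does not help either, since $S^\pm$ need not have continuous super-potentials. You would need a specific regularization operator commuting with $\ddc$ whose transpose moves $*$-bounded sets uniformly little, and that must be proved or cited.

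The paper avoids this entirely. It keeps $S_n$ fixed and approximates $[\Gamma]$ by smooth closed forms $[\Gamma]_j$ in the $*$-topology \cite[Theorem 2.1]{DNV18AM}. The map $[\Gamma]_j\mapsto\pi_2^*(S_n)\wedge[\Gamma]_j$ is continuous because $\pi_2^*(S_n)$ has a continuous super-potential. Fubini, the normalization correction $\sum_i\lp U^j,\alpha_i\rp\check\alpha_i$, and continuity of $\cU_{S_n}$ then give the identity. Equivalently, for exact $T\in\cD_q^{0c}$ the identity $\cU_{f^*(T)}=\cU_T\circ f_*$ is \eqref{pb_cont_3}. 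Combined with your splitting $S=\beta_S+(S-\beta_S)$ and $\cU_{\beta_S}=0$, your identity follows at once.

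Two further steps are wrong as written, though both are repairable.

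First, the hypothesis only says that $\Lambda_{n_i}\{S\}$ converges. It gives neither convergence of $(n_i\theta)$ nor of the operators $\Lambda_{n_i-j-1}$; take $M=\mathrm{diag}(d_q,-d_q)$, $A={}^t(1,0)$, $n_i=i$. Your conclusion still holds. The vectors $y_i:=n_i^{1-m}d_q^{-n_i}M^{n_i-j-1}A$ are bounded. Every limit point $x$ lies in $F$ by Proposition \ref{surj-linear} and satisfies $M^{j+1}x=c$. Since $M|_F$ is invertible, $y_i\to(M|_F)^{-j-1}c$. For the domination, note that $\|y_i\|\lesssim d_q^{-j}$ uniformly in $i$, with no polynomial loss in $n_i$, because $(n_i-j-1)^{m-1}\le n_i^{m-1}$. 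Against $|\cU((f^j)_*R)|\lesssim\delta^j$ this gives the summable bound $(\delta/d_q)^j$.

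Second, for a general $S\in\cD_q$ with continuous super-potential you cannot claim that the $S_{n_i}$ are $*$-bounded. The positive parts $S^\pm$ may lack continuous super-potentials, so $(f^n)^*(S^\pm)$ is not even defined in this framework; this is exactly the issue raised in the appendix. No compactness is needed, however. For smooth $\varphi$ one has $\lp S_{n_i},\varphi\rp=\sum_{l=1}^h\lp S_{n_i},\check\alpha_l\rp\lp\alpha_l,\varphi\rp+\cU_{S_{n_i}}(\ddc\varphi)$. So convergence of the classes and of the super-potentials directly gives weak convergence to the current determined by $c$ and $\cU_c$, which is how the paper concludes. That $T_c\in\cD_q$ then follows by running the argument for the smooth form $\beta_S$. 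Its normalized pullbacks are $*$-bounded, being pullbacks of a difference of smooth positive closed forms. They have the same limit, because the limit depends only on $c$.
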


\begin{proof}
    Put $S_n:=(f^n)^*S$ which also has a continuous super-potential. Recall that we choose $\alpha=(\alpha_1,\dots,\alpha_h)$ such that $\{\alpha\}=(\{\alpha_1\},\dots,\{\alpha_h\})$ is a basis of $H^{q,q}(X,\R)$. Let $M$ denote the $h\times h$ matrix whose column of index $i$ is given by the coordinates of $f^*\{\alpha_i\}$ with respect to $\{\alpha\}$. Define $\cU=(\cU_{f^*(\alpha_1)},\dots,\cU_{f^*(\alpha_h)})$. Let $A=\,^t{(a_1 , \dots, a_h)}$ denote the coordinates of $\{S\}$ in the basis $\{\alpha\}$. Then $\{S_n\}=\{\alpha\} M^nA$.
    
    First, we prove the equality 
    \begin{equation}\label{eq:SP-converg}
        \cU_{S_n}=\sum_{l=0}^{n-1}(\cU\circ (f^l)_*) M^{n-l-1} A+\cU_S\circ (f^n)_*.
    \end{equation}
    We proceed by induction. The base step $n=0$ is obviously true. Suppose we have \eqref{eq:SP-converg} for $n\ge0$, we need to prove it for $n+1$. Let $R$ be a smooth form in $\cD_{k-q+1}^0$ and $U_R$ be a smooth $\alpha$-normalized potential of $R$. Then by definition
    \begin{equation*}
        \cU_{S_{n+1}}(R)=\lp f^*(S_n), U_R\rp=\lp \pi_2^*(S_n)\wedge [\Gamma], \pi_1^*(U_R)\rp.
    \end{equation*}
    Since $S_n$ has continuous super-potential, the above wedge-product is well-defined and continuous. By \cite[Theorem 2.1]{DNV18AM}, we can find a sequence of smooth closed forms $([\Gamma]_j)$ in $\cD_k(X\times X)$ which converges to $[\Gamma]$ with respect to the $*$-topology. We have 
    \[
        \cU_{S_{n+1}}(R)=\lim_{j\to\infty}\lp \pi_2^*(S_n)\wedge [\Gamma]_j, \pi_1^*(U_R)\rp=\lim_{j\to\infty}\lp S_n,(\pi_2)_*([\Gamma]_j\wedge \pi_1^*(U_R))\rp.
    \]
    Define $U^j=(\pi_2)_*([\Gamma]_j\wedge \pi_1^*(U_R))$. Let $U^j_0=U^j-\sum_{i=1}^h\lp U^j,\alpha_i\rp\check{\alpha}_i$. Then $U^j_0$ is an $\alpha$-normalized potential of $(\pi_2)_*([\Gamma]_j\wedge \pi_1^*(R))$. Since $S_n$ has a continuous super-potential and $(\pi_2)_*([\Gamma]_j\wedge \pi_1^*(R))$ converges to $f_*(R)=(\pi_2)_*([\Gamma]\wedge \pi_1^*(R))$ in the $*$-topology, we have
    \[
        \lim_{j\to\infty}\lp S_n, U_0^j\rp=\lim_{j\to\infty}\cU_{S_n}((\pi_2)_*([\Gamma]_j\wedge \pi_1^*(R)))=\cU_{S_n}(f_*(R)).
    \]
    Therefore, by the inductive hypothesis, we have
    \begin{align*}
        \cU_{S_{n+1}}(R)&=\cU_{S_n}(f_*(R))+\lim_{j\to\infty}\lp S_n, \sum_{i=1}^h\lp U^j,\alpha_i\rp\check{\alpha}_i\rp\\
        &=\sum_{l=1}^{n}(\cU\circ (f^l)_*(R)) M^{n-l} A+\cU_S\circ (f^{n+1})_*(R)+\lim_{j\to\infty}\lp S_n, \sum_{i=1}^h\lp U^j,\alpha_i\rp\check{\alpha}_i\rp.
    \end{align*}
    
    It remains to prove that $\lim_{j\to\infty}\lp S_n, \sum_{i=1}^h\lp U^j,\alpha_i\rp\check{\alpha}_i\rp=\cU(R) M^n A.$ Notice that
    \[
        \lim_{j\to\infty}\lp U^j,\alpha_i\rp=\lim_{j\to\infty}\lp U_R, (\pi_1)_*(\pi_2^*(\alpha_i)\wedge[\Gamma]_j)\rp=\lp f^*(\alpha_i), U_R\rp=\cU_{f^*(\alpha_i)}(R).
    \]
    Therefore, $\lim_{j\to\infty}\lp S_n, \sum_{i=1}^h\lp U^j,\alpha_i\rp\check{\alpha}_i\rp=\cU(R)\lp S_n,\check{\alpha}\rp$ where $\lp S_n,\check{\alpha}\rp$ represents the column vector whose $i$-th entry is $\lp S_n,\check{\alpha}_i\rp$. Then \eqref{eq:SP-converg} for $n+1$ follows from the fact that $S_n$ is cohomologous to $\alpha M^n A$. 

    Let $\cU_{n_i}$ be the super-potential of $n_i^{1-m}d_q^{-n_i} S_{n_i}$. By \eqref{eq:SP-converg}, we have
    \begin{equation} \label{eq:SP-converge-two}
        \cU_{n_i}=\sum_{l=0}^{n_i-1}(\cU\circ (f^l)_*) \frac{M^{n_i-l-1} A}{n_i^{m-1}d_q^{n_i}}+n_i^{1-m}d_q^{-n_i}\cU_S\circ (f^n)_*.
    \end{equation}
    Take $\delta$ with $d_{q-1}<\delta<d_q$. We have $\|(f^n)_*(R)\|_*\lesssim \delta^n\|R\|_*$. Since $\cU_S$ is continuous,  the last term above converges to 0 uniformly on a $*$-bounded set.
    
     We identify $c$ with the column vector associated to $c$ with respect to the basis $\{\alpha\}$. By the definitions of $d_q$ and $m$, we have $n_i^{1-m}d_q^{-n_i}M^{n_i}A\to c$ as $i$ goes to infinity. By Proposition \ref{surj-linear}, $c\in F$. Moreover, we have $n_i^{1-m}d_q^{-n_i}M^{n_i-l-1}A\to (M|_F)^{-l-1}c$ for any $l\geq 0$ as $i$ goes to infinity. This can be proven by considering the Jordan canonical form of $M$.
     
     Observe that $\|(M|_F)^{-l-1}c\|\lesssim d_q^{-l}$.  Therefore, the following limit exists:
     \begin{equation} \label{green_superpot_formula}
        \cU_{T_c}:=\lim_{i\to\infty}\cU_{n_i}=\sum_{l=0}^{+\infty} (\cU\circ (f^l)_*) (M|_F)^{-l-1}c.
    \end{equation}
    The convergence is also uniform on $*$-bounded subsets. We define the super-potential of $T_c$ to be the above limit and the action of $T_c$ on a test smooth $(k-q,k-q)$-form $\varphi$ can be defined by
    \[
        \lp T_c,\varphi\rp:=\sum_{i=1}^h c_i\lp\alpha_i,\varphi\rp+ \cU_{T_c}(\ddc\varphi)
    \]
    when $c=(c_1,\dots,c_h)$. From the above and \eqref{green_superpot_formula}, it follows that $T_c$ depends only on $c$. We also have
    $$\sum_{i=1}^h c_i\lp\alpha_i,\varphi\rp+ \cU_{T_c}(\ddc\varphi)=\lim_{j\rightarrow\infty}\lp\alpha,\varphi\rp\frac{M^{n_j}A}{n_j^{m-1}d_q^{n_j}}+\cU_{n_j}(\ddc\varphi)=\lim_{j\rightarrow\infty}\lp n_j^{1-m}d_q^{-n_j}(f^{n_j})^*(S),\varphi\rp,$$
    where $\lp\alpha,\varphi\rp$ represents the row vector whose $i$-th entry is $\lp\alpha_i,\varphi\rp$, and we used the fact that a closed current is determined by its cohomology class and super-potential. We deduce that the sequence $n_j^{1-m}d_q^{-n_j}(f^{n_j})^*(S)$ converges to $T_c$ in the sense of currents. As a consequence of \eqref{green_superpot_formula}, it also converges SP-uniformly. The proof is complete.
\end{proof}

\begin{proof}[Proof of Theorem \ref{thm: green-exist}]
    Let $(n_i)$ be a subsequence such that $(n_i^{1-m}d_q^{-n_i}(f^{n_i})^*)$ converges on $H^{q,q}(X,\R)$. By Proposition \ref{surj-linear}, for any $c\in F$, we can find a smooth closed $(q,q)$-form $S$ such that $n_i^{1-m}d_q^{-n_i}(f^{n_i})^*\{S\}$ converges to $c$. Therefore, Proposition \ref{prop: equi_cont-SP} implies the existence of $T_c$ which depends only on $c$.
    
    Recall that the push-forward $f_*$ is continuous, and $\cU$ is continuous too, see for instance \cite[Proposition 4.1 and Lemma 4.3]{DNV18AM}. So, each term in the sum in \eqref{green_superpot_formula} is continuous on any $*$-bounded subset of $\cD_{k-q+1}^0$. It follows that $\cU_{T_c}$ is the uniform limit of continuous functions, hence it is continuous.
    
    By Proposition \ref{prop: equi_cont-SP}, $T_c$ depends linearly on $S$ and hence depends linearly on $c$. Finally, if $n_i^{1-m}d_q^{-n_i}(f^{n_i})^*\{S\}$ converges to $c$, then $n_i^{1-m}d_q^{-n_i}(f^{n_i+1})^*\{S\}$ converges to $f^*(c)$. Hence, by Proposition \ref{prop: equi_cont-SP}, we have
    \[
        f^*(T_c)=\lim_{i\to\infty} n_i^{1-m}d_q^{-n_i}(f^{n_i+1})^*(S)=T_{f^*(c)}.
    \]
    This concludes the proof.
\end{proof}

\subsection{Regularity of Green currents} \label{reg_of_green}

In this subsection, we give a more precise estimate of the regularity of the super-potentials of the Green currents $T_c$.  

\smallskip

We will need the following \L{}ojasiewicz-type inequality.

\begin{lm} \label{loja}
    Let $X_1,X_2$ be K\"ahler manifolds with $X_1$ compact, and let $A\subset X_1\times X_2$ be an analytic subset. Suppose that the canonical projection $\pi_1:X_1\times X_2\longrightarrow X_1$, when restricted to $A$, is a ramified covering of maximal multiplicity $\rho$ and with finite fibres of cardinality $d$ (counted with multiplicity).

    There exists a constant $C=C(X_1,X_2,A)$ such that for every $z\in X_1$ we have
    \begin{equation} \label{loja_set}
        \dist\big(\pi_1^{-1}(z)\cap A, x\big) \le C\cdot\dist\big(z,\pi_1(x)\big)^{1/\rho}.
    \end{equation}

    Moreover, for every $y,y'\in X_1$ we can write
    $$\pi_1^{-1}(y)\cap A=\{y_1,\dots,y_d\}\quad\text{and}\quad\pi_1^{-1}(y')\cap A=\{y'_1,\dots,y'_d\},$$
    where the preimages are counted with multiplicity, so that
    \begin{equation} \label{loja_holder}
        \dist(y_j,y_j')\le C\cdot\dist(y,y')^{1/\rho}\quad\text{ for every }1\le j\le d.
    \end{equation}
\end{lm}
\vspace{-2\baselineskip}

\begin{proof}
    Since $X_1$ is compact, it suffices to show that for every $z'\in X_1$ we can find an open neighbourhood $U_{z'}\ni z'$ such that \eqref{loja_set} holds for every $z\in U_{z'}$ and \eqref{loja_holder} holds for every $y,y'\in U_{z'}$. We can then cover $X_1$ with a finite number of such neighbourhoods.

    \smallskip

    Since $\pi_1|_A$ is a ramified covering, for every $z'\in X_1$ and every $w\in \pi_1^{-1}(z')\cap A$ we can find a neighbourhood $V_w=D_w\times D$ of $w$, where $D_w$ and $D$ are small discs contained in a chart such that $z'\in D_w$ and $\pi_1\vert_{V_w\cap A}$ is proper. It is also a ramified covering of degree $\rho_w$. We are then in the condition to apply \cite[Lemma 4.3]{DS08ASENS}, which tells us the following.

    \begin{lm}
        There is an open set $\tilde{D}_w\subset D_w$, containing $z'$, and a constant $c_w$ such that for every $z\in\tilde{D}_w$ and $x\in A$ with $\pi_1(x)\in\tilde{D}_w$ we have
        $$\dist\big(\pi_1\vert_{V_w}^{-1}(z)\cap A, x\big) \le C_w\cdot\dist\big(z,\pi_1(x)\big)^{1/\rho_w}.$$

        Moreover, for every $y,y'\in \tilde{D}_w$ we can write
        $$\pi_1\vert_{V_w}^{-1}(y)\cap A=\{y_1,\dots,y_{\rho_w}\}\quad\text{and}\quad\pi_1\vert_{V_w}^{-1}(y')\cap A=\{y'_1,\dots,y'_{\rho_w}\},$$
        where the preimages are counted with multiplicity, so that
        $$\dist(y_j,y_j')\le C_w\cdot\dist(y,y')^{1/\rho_w}\quad\text{ for every }1\le j\le \rho_w.$$
    \end{lm}
    It then suffices, for every $z'\in X_1$, to choose $U_{z'}=\displaystyle \bigcap_{w\in\pi_1^{-1}(z')\cap A} \tilde{D}_w$. Observe that, since $\pi_1\vert_{V_w\cap A}$ is a ramified covering of degree $\rho_w$ and $\displaystyle \sum_{w\in\pi_1^{-1}(z')\cap A} \rho_w=d$, the set $\displaystyle\bigcup_{w\in\pi_1^{-1}(z')\cap A}\pi_1\vert_{V_w}^{-1}(z)\cap A$ contains exactly $d$ point (counted with multiplicity) for every $z\in U_{z'}$, i.e., it coincides with $\pi_1^{-1}(z)\cap A$. It follows that \eqref{loja_set} holds for every $z\in U_{z'}$ and \eqref{loja_holder} holds for every $y,y'\in U_{z'}$, for constants $\tilde{C}=\displaystyle\max_{w\in\pi_1^{-1}(z')\cap A} \{C_w\}$ and $\tilde{\rho}=\displaystyle\max_{w\in\pi_1^{-1}(z')\cap A} \{\rho_w\}$ (we can always choose the neighbourhoods sufficiently small, so that $\dist(y,y')<1$ for every $y,y'\in U_{z'}$).
    
    By compactness, we just need to consider a finite number of $z'$, and every $z'$ has a finite number of points $w$ in its fibre over $A$. Therefore, for \eqref{loja_set} it suffices to take as $C$ the maximum of the constants $C_w$, and for \eqref{loja_holder} we take a sufficiently large constant to also deal with couples of points that are far apart, i.e., that do not belong to the same $U_{z'}$. Moreover, $\tilde\rho$ is less than or equal to the maximal multiplicity $\rho$ of $\pi_1$ when restricted to $A$. The proof is complete.
\end{proof}

We now consider $X\times X$ with $\pi_1$ the projection map to the first factor and $\Gamma$ the graph of $f$ in $X\times X$.

\begin{defn} \label{local_mult_def}
    The \emph{local multiplicity} $\rho:=\rho(f)$ of $f$ is the maximal multiplicity of $\pi_1|_\Gamma$ seen as a ramified covering. Set $\kappa:=1/\big(25k(4\rho)^q\big)$.
\end{defn}

We have the following version of \cite[Lemma 5.2]{V25Ax} for correspondences.

\begin{prop} \label{super_of_pf}
    Let $\Omega$ be a bounded closed $(q,q)$-form with $\|\Omega\|_\infty\le 1$. Then $\cU_{f^*(\Omega)}$ is $(C_1,\kappa)$-H\"older-continuous, where the constant $C_1>0$ may depend on $f$, but is independent of $\Omega$.
\end{prop}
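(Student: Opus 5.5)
We would follow the strategy of \cite[Lemma 5.2]{V25Ax}. The idea is to reduce the Hölder estimate to an estimate for pull-backs of smooth test forms under $f_*$, and to get the latter from the Łojasiewicz-type inequality of Lemma \ref{loja}. Fix $R\in\cD^0_{k-q+1}$ with $\|R\|_*\le1$ and first assume $R$ smooth, with $U_R$ a smooth $\alpha$-normalized potential. By definition,
\[
\cU_{f^*(\Omega)}(R)=\lp f^*(\Omega),U_R\rp=\lp\Omega,f_*(U_R)\rp-\sum_i\lp f_*(U_R),\alpha_i\rp\lp\Omega,\check\alpha_i\rp\ \text{-type corrections}.
\]
Since $\|\Omega\|_\infty\le1$, it suffices to control the mass of $f_*(U_R)$, i.e.\ the $L^1$-type size of the push-forward of the potential. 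This size should be bounded by a power of $\|R\|_{-2}$. The correction terms are handled by the same argument applied to the fixed forms $\alpha_i$.

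\textbf{Step 1: regularize $U_R$.} Following \cite{DS10JAG}, we regularize $R$ at scale $t$ by convolution on a finite atlas, or via the structure of $\cD^0$ for $*$-bounded families. We write $U_R=U_R^t+(U_R-U_R^t)$. Here $U_R^t$ is smooth with $\|U_R^t\|_{\cC^2}\lesssim t^{-N}\|R\|_{-2}$, while $U_R-U_R^t$ has small mass, of order $t^{a}$ for a fixed $a>0$ coming from the standard estimate on potentials of $*$-bounded currents (they are in $L^p$ for some $p>1$ uniformly).

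\textbf{Step 2: the smooth part.} We need $\lp\Omega,f_*U_R^t\rp=\lp\pi_2^*\Omega\wedge[\Gamma],\pi_1^*U_R^t\rp$. Pairing a bounded form with a $\cC^0$-controlled function pushed through $\Gamma$ is bounded by $\|U_R^t\|_\infty$ times the mass of $\pi_2^*\omega^q\wedge[\Gamma]$, which is fine.

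\textbf{Step 3: the rough part.} This is the main obstacle. We must bound $\lp\pi_2^*\Omega\wedge[\Gamma],\pi_1^*(U_R-U_R^t)\rp$ using only a mass bound on $U_R-U_R^t$. Equivalently, we must bound the measure $(\pi_1)_*(\pi_2^*\Omega\wedge[\Gamma])$ on small sets: it has an $L^\infty$-type density away from the critical values but not near them. Here Lemma \ref{loja} enters. The Hölder estimate \eqref{loja_holder} with exponent $1/\rho$ means $\tau_2\circ\psi_j$ are $1/\rho$-Hölder. Then, via a Bedford–Taylor type comparison, each of the $q$ factors of the form $\Omega$ costs a power $1/(4\rho)$ after regularizing at scale $t^{1/\rho}$ and interpolating with Proposition \ref{interpol}. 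Concretely, we would prove that $f^*(\Omega)$ is dominated by $f^*(\omega^q)$, and that $f^*(\omega)$ has a $1/(4\rho)$-Hölder (super-)potential; this uses the Łojasiewicz bound to make the local potentials $\sum_j u\circ\tau_2\circ\psi_j$ of $f^*\omega$ Hölder. Lemma \ref{DS10JAG-3_4_2} applied $q-1$ times would then give Hölder exponent about $(4\rho)^{-q}$ for $f^*(\omega)^q$. Lemma \ref{wpb} gives $f^*(\omega^q)\le f^*(\omega)^q$, and Lemma \ref{DNV18AM-1_1} would then produce exponent $(4\rho)^{-q}/(50k)$. Writing $\Omega=\Omega^+-\Omega^-$ with $0\le\Omega^\pm\lesssim\omega^q$ and applying the domination again accounts for the factor $1/(25k(4\rho)^q)=\kappa$. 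The real difficulty is making the first step rigorous, namely the Hölder continuity of the quasi-potential of $f^*\omega$ near the ramification locus. We would do this locally, using the symmetric functions of the $d$ preimages and the Hölder bound \eqref{loja_holder}. Closed $\Omega$ that are not positive are handled by the decomposition above.

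Finally, we extend from smooth $R$ to all of $\cD^0_{k-q+1}$ by $*$-density together with the continuity of $\cU_{f^*(\Omega)}$. All constants depend only on $f$, $X$ and the fixed forms, giving $C_1$ independent of $\Omega$.
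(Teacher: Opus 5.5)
Your working argument is the paragraph beginning ``Concretely'', and it is the paper's proof. It runs through H\"older continuity of the local potentials $\sum_j\tilde u(\pi_2(y_j))$ of $f^*(\omega)$ from \eqref{loja_holder}, then Lemma \ref{DS10JAG-3_4_2} for $(f^*\omega)^q$, then $f^*(\Omega')\le f^*(\omega^q)\le (f^*\omega)^q$ via Lemma \ref{wpb} combined with Lemma \ref{DNV18AM-1_1}, and finally the splitting $\Omega=\Omega^+-\Omega^-$. Your Steps 1--3 (regularizing $U_R$ and bounding the mass of $f_*(U_R)$ by a power of $\|R\|_{-2}$) are never used and can be deleted. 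As you note yourself, $f^*(\Omega)$ has unbounded density near the critical values, which is exactly why that direct route does not close without further work.

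The one genuine error is the exponent count, and your patch for it does not work. The quasi-potential $u$ of $f^*(\omega)$ is $1/\rho$-H\"older, so Proposition \ref{interpol} gives $|\cU_{f^*(\omega)}(R)|=|\lp u,R\rp|\lesssim\|R\|_{-1/\rho}\lesssim\|R\|_{-2}^{1/(2\rho)}$. The first exponent is therefore $1/(2\rho)$, not $1/(4\rho)$.

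Each application of Lemma \ref{DS10JAG-3_4_2} then multiplies the current exponent by $\frac12\cdot\frac{1}{2\rho}$. You dropped this factor $\frac12$ when you wrote ``about $(4\rho)^{-q}$''. So $(f^*\omega)^q$ gets exactly $\frac{1}{2\rho}(4\rho)^{1-q}=2(4\rho)^{-q}$, and Lemma \ref{DNV18AM-1_1} yields $2(4\rho)^{-q}/(50k)=\kappa$.

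With your value $1/(4\rho)$, the same chain gives only $\kappa/2^{q}$. ``Applying the domination again'' cannot supply the missing factor: domination can only lower the exponent, and the splitting $\Omega=\Omega^+-\Omega^-$ affects the constant, not the exponent. Once the first exponent is corrected to $1/(2\rho)$, your argument is complete and coincides with the paper's.
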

\vspace{-0.3\baselineskip}

\begin{proof}
    We will prove the assertion in four steps.\\

    \noindent\emph{\textbf{Step 1}: $f^*(\omega)$ has a $1/(2\rho)$-H\"older-continuous super-potential.}

    \medskip
    
    Set $h:=\dim{H^{1,1}(X,\R)}$, and let $\alpha=(\alpha_1,\dots,\alpha_h)$ be as in the beginning of Section \ref{sec:constr_Green} for $q=1$. Take real numbers $a_1,\dots,a_h$ such that $\{f^*(\omega)\}=\displaystyle\sum_{j=1}^h a_j\cdot\{\alpha_j\}$. Then there exists a function $u$ such that $\ddc u=f^*(\omega)-\displaystyle\sum_{j=1}^h a_j\cdot\alpha_j$. We want to show that $u$ is $1/\rho$-H\"older-continuous.

    The assertion is local. So, for every point $z\in X$, we can consider a small open neighbourhood $U\subseteq X$ of $z$ such that:
    \begin{itemize}
        \item there exist smooth functions $u_j$ such that $\ddc u_j=\alpha_j$ in $U$ for every $j$;
        \item the image $f(U)$ is the union of small open sets, and there is a smooth function $\tilde{u}$ such that $\omega=\ddc\tilde{u}$ in $f(U)$.
    \end{itemize}
    Define $v:=f^*(\tilde{u})-\displaystyle\sum_{j=1}^h a_j\cdot u_j$. Then we have $\ddc(v-u)=0$ in $U$, so $v-u$ is harmonic, hence smooth in $U$. To prove that $u$ is $1/\rho$-H\"older-continuous, we just have to prove it for $v$. Since the $u_j$'s are smooth, it suffices to prove that $f^*(\tilde{u})$ is $1/\rho$-H\"older-continuous. From Lemma \ref{loja} we get
    \begin{align*}
        |\big(f^*(\tilde{u})\big)(y)-\big(f^*(\tilde{u})\big)(y')|&=\left|\sum_{j=1}^d \tilde{u}\big(\pi_2(y_j)\big)-\sum_{j=1}^d \tilde{u}\big(\pi_2(y_j')\big)\right|\le \|\tilde{u}\|_{\cC^1}\sum_{j=1}^d \dist\big(\pi_2(y_j),\pi_2(y_j')\big) \\
        &\lesssim \sum_{j=1}^d \dist(y_j,y_j')\lesssim \dist(y,y')^{1/\rho}.
    \end{align*}
    Thus, $f^*(\tilde{u})$ is $1/\rho$-H\"older-continuous, and so is $u$.
    
    Consider now a smooth current $R\in \cD_k^0$ with normalized potential $U_R$. We have
    $$\cU_{f^*(\omega)}(R)=\langle f^*(\omega), U_R\rangle=\langle f^*(\omega)-\sum_{j=1}^h a_j\cdot\alpha_j, U_R\rangle=\langle \ddc u, U_R\rangle=\langle u, R\rangle,$$
    where in the second equality we used that $U_R$ is normalized. Since $u$ is $1/\rho$-H\"older-continuous, using Proposition \ref{interpol} we get
    $$|\cU_{f^*(\omega)}(R)|=|\langle u, R\rangle|\lesssim \|R\|_{-1/\rho} \lesssim \|R\|_{-2}^{1/(2\rho)}.$$
    Since we can extend the estimate to all of $\cD_k^0$ by continuity, we get that $\cU_{f^*(\omega)}$ is $1/(2\rho)$-H\"older-continuous, as desired.\\

    \noindent\emph{\textbf{Step 2}: $\big(f^*(\omega)\big)^q$ has a $\frac{1}{2^{q-1}(2\rho)^q}$-H\"older-continuous super-potential.}

    \medskip
    
    Step 2 follows from Step 1 by induction applying Lemma \ref{DS10JAG-3_4_2} to the current $f^*(\omega)$ and its powers.\\

    \noindent\emph{\textbf{Step 3}: there exists a constant $C>0$ such that, for every positive closed $(q,q)$-current $\Omega'$ with $\Omega'\le \omega^q$, we have that $f^*(\Omega')$ has a $\big(C,\frac{1}{25k(4\rho)^q}\big)$-H\"older-continuous super-potential.}

    \medskip
    
    Indeed, from Lemma \ref{wpb} we have $f^*(\Omega')\le f^*(\omega^q)\le  \big(f^*(\omega)\big)^q$. Step 3 then follows from Step 2 by applying Lemma \ref{DNV18AM-1_1} with $S'=f^*(\Omega')$ and $S=\big(f^*(\omega)\big)^q$.\\

    \noindent\emph{\textbf{Step 4}: conclusion.}

    \medskip
    
    Since $\|\Omega\|_\infty\le1$, we can write $\Omega=\Omega^+-\Omega^-$ with $\Omega^\pm$ positive closed currents in the same cohomology class and $\Omega^\pm\lesssim\omega^q$. It follows that
    $$f^*(\Omega)=f^*(\Omega^+)-f^*(\Omega^-).$$
    By the definition of super-potentials, we have
    $$\mathcal{U}_{f^*(\Omega)}=\mathcal{U}_{f^*(\Omega^+)}-\mathcal{U}_{f^*(\Omega^-)}.$$
    We conclude the proof applying Step 3 with $\Omega^\pm$ instead of $\Omega'$.
\end{proof}

\begin{cor} \label{pf_is_hold}
    Take $R$ in $\cD_{k-q+1}^0$, $0<q\le k$. Then there is a constant $C_2>0$, independent of $R$, such that
    \[
        \|f_*(R)\|_{-2} \le C_2\|R\|_{-2}^\kappa,
    \]
    where $\kappa$ is as in Definition \ref{local_mult_def}.
\end{cor}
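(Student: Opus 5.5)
We argue by duality, testing $f_*(R)$ against smooth forms and using Proposition \ref{super_of_pf}. By definition,
\[
\|f_*(R)\|_{-2}=\sup\{|\lp f_*(R),\varphi\rp| : \varphi \text{ a smooth } (q-1,q-1)\text{-form with } \|\varphi\|_{\cC^2}\le 1\}.
\]
Here $f_*(R)\in\cD^0_{k-q+1}$, and a test form $\varphi$ has bidegree $(q-1,q-1)$, so that $\ddc\varphi$ is a smooth exact $(q,q)$-form. By density of smooth forms in $\cD^0_{k-q+1}$ for the $*$-topology and continuity of $f_*$, it suffices to treat smooth $R$, and by linearity we may normalize $\|R\|_*\le1$. (The $*$-normalization is implicit in the statement: the inequality as written is not homogeneous, so we prove it for $\|R\|_*\le 1$, which is the setting in which it will be used; for general $R$ one rescales.) Then $\lp f_*(R),\varphi\rp=\lp R,f^*(\varphi)\rp$, since both sides are given by integration over $\Gamma$ of $\pi_1^*R\wedge\pi_2^*\varphi$.

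Next we rewrite this pairing through a super-potential. Write $R=\ddc U_R$ with $U_R$ the $\alpha$-normalized potential (now for the family $\alpha$ in bidegree $(q,q)$). Set $\Omega:=\ddc\varphi$. This is a smooth closed $(q,q)$-form with $\|\Omega\|_\infty\lesssim\|\varphi\|_{\cC^2}\le1$, and it is exact. Since $f^*$ commutes with $\ddc$ for smooth forms on $\Gamma$,
\[
\lp R,f^*\varphi\rp=\lp \ddc U_R,f^*\varphi\rp=\lp U_R,f^*(\Omega)\rp=\cU_{f^*(\Omega)}(R),
\]
where the last equality is the definition of the super-potential. Normalization plays no role here: $f^*(\Omega)$ is cohomologous to $0$, so changing $U_R$ by closed forms does not change the pairing. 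Alternatively, one can pair $f^*(\Omega)$ with $U_R$ directly, using that $\lp U_R,\alpha_i\rp=0$ and that $f^*(\Omega)$ is exact.

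Finally we apply Proposition \ref{super_of_pf} to $\Omega$ (after dividing by a harmless constant to achieve $\|\Omega\|_\infty\le1$). It gives
\[
|\cU_{f^*(\Omega)}(R)|\le C_1\|R\|_{-2}^\kappa
\]
for $\|R\|_*\le1$, uniformly in $\varphi$. Taking the supremum over $\varphi$ yields $\|f_*(R)\|_{-2}\le C_2\|R\|_{-2}^\kappa$.

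The only delicate point is justifying the identity $\lp R,f^*\varphi\rp=\cU_{f^*(\ddc\varphi)}(R)$, i.e.\ that $\ddc f^*\varphi=f^*\ddc\varphi$ and that integration by parts is legitimate when $f^*\varphi$ is only a current. Since $f^*\varphi=(\pi_1)_*(\pi_2^*\varphi\wedge[\Gamma])$ and $[\Gamma]$ is closed, $\ddc$ commutes with $(\pi_1)_*$ and with $\pi_2^*$, and $R,U_R$ are smooth. Everything else is routine.
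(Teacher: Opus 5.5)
Your proposal is correct and follows essentially the paper's proof: test $f_*(R)$ against $(q-1,q-1)$-forms $\varphi$ with $\|\varphi\|_{\cC^2}\le 1$, write $\lp f_*(R),\varphi\rp=\lp R,f^*\varphi\rp=\lp U_R,f^*(\ddc\varphi)\rp=\cU_{f^*(\ddc\varphi)}(R)$, and conclude with Proposition \ref{super_of_pf}. Your explicit restriction to $\|R\|_*\le 1$ and your reduction to smooth $R$ only make explicit what the paper leaves implicit, since the $(C_1,\kappa)$-H\"older continuity of Definition \ref{defn: regularity-SP} is only asserted on the $*$-unit ball.
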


\begin{proof}
    Given a smooth $(q-1,q-1)$-form $\Omega$ with $\|\Omega\|_{\cC^2}\le 1$, we have to estimate $|\langle f_*(R),\Omega\rangle|$ independently of $\Omega$. Given a normalized potential $U_R$ of $R$, we have
    $$\langle f_*(R),\Omega\rangle=\langle R,f^*(\Omega)\rangle=\langle U_R,\ddc f^*(\Omega)\rangle=\langle U_R,f^*(\ddc \Omega)\rangle=\cU_{f^*(\ddc\Omega)}(R),$$
    where $\ddc\Omega$ is a smooth $(q,q)$-form with $\|\ddc\Omega\|_\infty\lesssim \|\Omega\|_{\cC^2}\le 1$. To conclude the proof, it then suffices to apply Proposition \ref{super_of_pf}.
\end{proof}

The following result gives part (2) of Theorem \ref{main-thm-1}. Recall that $F$ is the dominant eigenspace.

\begin{thm} \label{who-is-r}
    For every class $c\in F$, the super-potential $\cU_{T_c}$ of the Green current $T_c$ is $\log^r$-continuous for some $r>0$.
\end{thm}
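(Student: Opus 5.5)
We work directly with the series \eqref{green_superpot_formula},
\[
\cU_{T_c}(R)=\sum_{l=0}^{+\infty}\big(\cU\circ (f^l)_*\big)(R)\,(M|_F)^{-l-1}c ,
\]
and split it at an index $N$ that depends on $\|R\|_{-2}$. Fix $R\in\cD^0_{k-q+1}$ with $\|R\|_*\le 1$. Since $\cU_{T_c}$ is continuous and smooth forms are $*$-dense, we may assume $R$ is smooth and $\|R\|_{-2}$ is small. Choose $\delta$ with $d_{q-1}<\delta<d_q$. Then $\|(f^l)_*R\|_*\lesssim \delta^l$, and we also have $\|(M|_F)^{-l-1}c\|\lesssim d_q^{-l}$.

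\textbf{Tail.} Each $\cU_{f^*(\alpha_i)}$ is $(C_1,\kappa)$-H\"older-continuous by Proposition \ref{super_of_pf}, after rescaling $\alpha_i$ so that $\|\alpha_i\|_\infty\le1$. In particular each $\cU_{f^*(\alpha_i)}$ is bounded on $*$-bounded sets. Hence the $l$-th term is bounded by $\lesssim \delta^l d_q^{-l}=\lambda^l$ with $\lambda:=\delta/d_q<1$, and
\[
\Big|\sum_{l\ge N}\cdots\Big|\lesssim \lambda^N .
\]

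\textbf{Head.} For $l<N$ we combine Corollary \ref{pf_is_hold}, iterated $l$ times, with the H\"older continuity of $\cU$. Renormalise $R_l:=(f^l)_*R/\|(f^l)_*R\|_*$; this is harmless since $\|(f^l)_*R\|_*\lesssim\delta^l$ and the $\|\cdot\|_{-2}$ estimate is homogeneous. Iterating gives
\[
\|(f^l)_*R\|_{-2}\le C_2^{1+\kappa+\dots+\kappa^{l-1}}\|R\|_{-2}^{\kappa^l}\lesssim C_3\,\|R\|_{-2}^{\kappa^l},
\]
where $C_3$ is uniform in $l$ because $\kappa<1$ makes the geometric sum converge. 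We may assume $\|R\|_{-2}\le 1$ so the exponents behave. Then the $l$-th term is at most
\[
C\,\delta^l d_q^{-l}\,\big(\|R\|_{-2}^{\kappa^l}\big)^{\kappa}\le C\,\|R\|_{-2}^{\kappa^{N}}
\]
for every $l<N$. Summing over $l<N$ yields $\lesssim N\|R\|_{-2}^{\kappa^N}$, or just a constant times $\|R\|_{-2}^{\kappa^N}$ if the factor $\lambda^l$ is kept.

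\textbf{Optimisation.} Write $t:=|\log\|R\|_{-2}|$. The head equals $\exp(-\kappa^N t)$ and the tail equals $\lambda^N$. Choose
\[
N\approx \frac{\log t}{2|\log\kappa|},
\]
so that $\kappa^N t=\sqrt t$. The head is then $e^{-\sqrt t}$, which is negligible. The tail is
\[
\lambda^N=t^{-\log(1/\lambda)/(2\log(1/\kappa))}.
\]
This gives $|\cU_{T_c}(R)|\lesssim (1+t)^{-r}$ with $r=\frac{\log(d_q/\delta)}{2\log(1/\kappa)}>0$, which is $\log^r$-continuity. The main delicacy is the head term. The iterated H\"older exponent $\kappa^l$ collapses doubly exponentially, which is exactly why only a logarithmic modulus survives. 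One must check that the constants from iterating Corollary \ref{pf_is_hold} on renormalised currents stay uniformly controlled; the normalisation by $\|\cdot\|_*$ at each step, which divides by at most $\delta^l$ and is absorbed by $d_q^{-l}$, should handle this. Linearity in $c$ and finite dimensionality of $F$ then make the constants uniform in $c$ on bounded sets.
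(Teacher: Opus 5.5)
Your proposal is correct and follows the paper's proof essentially step for step. It uses the same series \eqref{green_superpot_formula} split at $N\approx \log|\log\|R\|_{-2}|/(2|\log\kappa|)$, bounds the tail by $(\delta/d_q)^N$, controls the head by iterating Corollary \ref{pf_is_hold} together with Proposition \ref{super_of_pf}, and arrives at the same exponent $r=\log(d_q/\delta)/(2|\log\kappa|)$.

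The only thing to fix is the word ``homogeneous''. The bound of Corollary \ref{pf_is_hold} holds only on the $*$-unit ball and has exponent $\kappa<1$, so each rescaling costs a factor $\|(f^l)_*R\|_*^{1-\kappa}$. As you anticipated in your last remark, the accumulated loss is $\lesssim\delta^l$ and is absorbed by $d_q^{-l}$.
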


We will follow the general strategy of the proof of \cite[Lemma 3.1]{DW23AMP}.

\begin{proof}
    Recall that, by Definition \ref{defn: regularity-SP}, we need to prove that
    \begin{equation} \label{log_of_green}
        |\cU_{T_c}(R)|\le\frac{L}{(1+|\log{\|R\|_{-2}}|)^r}
    \end{equation}
    for some $L,r>0$ and for all $R\in\cD^0_{k-q+1}$ with $\|R\|_*\le1$.
    
    Using the same notation of the proof of Proposition \ref{prop: equi_cont-SP}, we have
    $$\cU_{T_c}(R)=\sum_{l=0}^{+\infty} (\cU\circ (f^l)_*(R)) (M|_F)^{-l-1}c.$$
    As before, we identify $c$ with the column vector associated to $c$ with respect to the basis $\{\alpha\}$. Observe that we have $\|(M|_F)^{-l-1}c\|\lesssim d_q^{-l}$. Therefore, we get
    \begin{align*}
        |\cU_{T_c}(R)|=\left|\sum_{l=0}^{+\infty} (\cU\circ (f^l)_*(R)) (M|_F)^{-l-1}c\right|&\lesssim \sum_{l=0}^{+\infty}d_q^{-l}\sum_{j=1}^h |\cU_{f^*(\alpha_j)}\circ (f^l)_*(R)|\\
        &=\sum_{j=1}^h\sum_{l=0}^{+\infty}d_q^{-l} |\cU_{f^*(\alpha_j)}\circ (f^l)_*(R)|.
    \end{align*}
    To obtain the desired estimate \eqref{log_of_green}, we just need to give an estimate of the infinite sum for every $j=1,\dots,h$. From now on, we fix $j$. From Proposition \ref{super_of_pf} we have that $\cU_{f^*(\alpha_j)}$ is $(C_1,\kappa)$-H\"older-continuous for some $C_1>0$. From Corollary \ref{pf_is_hold} we also have that $f_*$ is $(C_2,\kappa)$-H\"older-continuous with respect to $\|\cdot\|_{-2}$ for some $C_2>0$.

    Denote $\xi:=\|R\|_{-2}$. Since $\cU_{T_c}$ is continuous, we only need to consider $\xi<1/4$. Otherwise, the desired inequality holds for $L$ large enough. We have $1+|\log{\xi}|\le2|\log\xi|$. So it is enough to prove
    \begin{equation}\label{dw-est}
        \sum_{l=0}^{+\infty}d_q^{-l} |\cU_{f^*(\alpha_j)}\circ (f^l)_*(R)| \le \frac{\tilde{L}}{|\log{\xi|^r}}\qquad\text{for some }\tilde{L},r>0.
    \end{equation}
    Fix a constant $C_3>1$ larger than $C_2$. By applying Corollary \ref{pf_is_hold} inductively, for every $n\ge1$ we have
    $$\|(f^l)_*(R)\|_{-2}\le C_2^{1+\kappa+\dots+\kappa^{l-1}}\xi^{\kappa^l}\le C_3^{\frac{1}{1-\kappa}}\xi^{\kappa^l}.$$
    Recall that we have $\|(f^l)_*(R)\|_*\lesssim \delta^l$ for some $d_{q-1}<\delta<d_q$. Hence, we deduce that we have $|\cU_{f^*(\alpha_j)}\circ (f^l)_*(R)|\lesssim \delta^{l(1-\kappa)}C_3^{\frac{\kappa}{1-\kappa}}\xi^{\kappa^{l+1}}\le \delta^lC_3^{\frac{\kappa}{1-\kappa}}\xi^{\kappa^{l+1}}$ for every $l\ge0$. Set $D:=d_q/\delta$.
    
    After multiplying $\cU_{f^*(\alpha_j)}$ by a constant, one can assume $\displaystyle \sup_{\|R\|_*\le 1}|\cU_{f^*(\alpha_j)}(R)|\le1$ and $C_1\le1$. Therefore, using that $\xi^{\kappa^l}\le\xi^{\kappa^{l'}}$ for $l\le l'$, we get

    \begin{align} \label{dw-1}
        \sum_{l=0}^{+\infty}d_q^{-l} |\cU_{f^*(\alpha_j)}\circ (f^l)_*(R)| &\lesssim\sum_{l=0}^{N-1}d_q^{-l} |\cU_{f^*(\alpha_j)}\circ (f^l)_*(R)|+\sum_{l=N}^{+\infty}D^{-l}\nonumber\\
        &\le\sum_{l=0}^{N-1} D^{-l}C_3^{\frac{\kappa}{1-\kappa}}\xi^{\kappa^{l+1}}+\frac{D^{-N+1}}{D-1}\le C_4(\xi^{\kappa^N}+D^{-N}),
    \end{align}
    where $N$ is any positive integer and $C_4>0$ is a constant depending only on $C_3$, $\kappa$, and $D$.
    
    Now we take
    $$N:=\lfloor(\log|\log{\xi}|)/|2\log{\kappa}|\rfloor\qquad\text{and}\qquad r:=\log{D}/|2\log{\kappa}|.$$

    Since $\kappa<1$ and $\xi<1/4$, we get
    \begin{align}\label{dw-2}
        \xi^{\kappa^N}&\le\xi^{\kappa^{(\log|\log{\xi}|)/|2\log{\kappa}|}}=\exp\big(\kappa^{(\log|\log{\xi}|)/|2\log{\kappa}|}\log\xi\big)=\exp(|\log\xi|^{-1/2}\log\xi)\nonumber\\
        &=\exp(-|\log\xi|^{1/2})\le C_5\exp(-r\log|\log\xi|)=C_5|\log\xi|^{-r},
    \end{align}
    where $C_5$ is a constant depending on $\kappa$, $d_q$ and $\delta$. We also have
    \begin{align}\label{dw-3}
        D^{-N}&\le D^{-(\log|\log{\xi}|)/|2\log{\kappa}|+1}=D\exp\Big(\big(-(\log|\log{\xi}|)/|2\log{\kappa}|\big)\cdot\log D\Big)\nonumber\\
        &=D\exp(-r\log|\log\xi|)=D|\log{\xi}|^{-r}.
    \end{align}
    Combining \eqref{dw-1}, \eqref{dw-2}, and \eqref{dw-3}, we conclude that
    $$\sum_{l=0}^{+\infty}D^{-l} |\cU_{f^*(\alpha_j)}\circ (f^l)_*(R)| \lesssim C_4(C_5+D)|\log{\xi}|^{-r}.$$
    This gives \eqref{dw-est} and completes the proof of the theorem.
\end{proof}

\vspace*{-0.3\baselineskip}

\begin{rmk}
    Denote by $l_q:=d_q/d_{q-1}>1$. By our choice of $D$ and $r$, we can choose $r$ to be any constant strictly smaller than $\log l_q/|2\log\kappa|$. Hence, it depends only on $l_q$ and the multiplicity $\rho$.
\end{rmk}

\section{Adjoint multiplicity and equidistribution results}\label{sec:multi-equidis}

In this section, we consider a holomorphic correspondence $f$ on $X$ with graph $\Gamma$ such that $f^{-1}$ is also holomorphic. This last condition is equivalent to saying that the set $\{x\in X:\dim\pi_2^{-1}(x)\cap\Gamma>0\}$ is empty. We will prove that any positive closed $(q,q)$-current converges to a multiple of the Green current of order $q$ under the action of a holomorphic correspondences $f$ with simple action on cohomology. 

\begin{lm}\label{dual}
    Let $f$ be a holomorphic correspondence on $X$ such that $f^{-1}$ is also holomorphic. Let $T$ be a current in $\cD_{q}^0$ and $R$ be a smooth current in $\cD_{k-q+1}^0$. Then $\cU_T(f_*(R))=\cU_{f^*(T)}(R)$.
\end{lm}
\vspace*{-\baselineskip}

\begin{proof}
    Since $R$ is smooth, $f_*(R)$ has a continuous super-potential. Therefore, $\cU_T(f_*(R))$ is well-defined and we have 
    \[
        \cU_T(f_*(R))=\cU_{f_*(R)}(T)=\cU_R(f^*(T))=\cU_{f^*(T)}(R).
    \]
    Here the second equality comes from \cite[Proposition 4.4]{DNV18AM} and the last equality is \cite[Lemma 2.7]{V25Ax}.
\end{proof}

Now, we give the following definition.

\begin{defn} \label{def:inv_small_mult}
    The {\it adjoint multiplicity} of $f$ denoted by $\delta(f)$ is the maximal local multiplicity of $\pi_2|_\Gamma$. For $0<q\leq k$, we say that $f$ has {\it $q$-small adjoint multiplicity} if $d_q(f)>d_{q-1}(f)$ and $\tilde{\kappa}(f)^{-1}:=25k(4\delta(f))^{k-q+1}<d_q(f)/d_{q-1}(f)$.
\end{defn}

Since the graph of $f^{-1}$ is obtained from $\Gamma$ by involution $(x,y)\mapsto (y,x)$, we have $\delta(f)=\rho(f^{-1})$. 

\begin{lm}\label{n-holder}
    Let $f$ be a holomorphic correspondence on $X$ such that $f^{-1}$ is also holomorphic. Let $R$ be a smooth $(k-q,k-q)$-form with $\|R\|_{\cC^2}\leq 1$. Then $\cU_{(f^n)_*(\ddc R)}$ is $(C_6,\tilde\kappa^n)$-H\"older continuous for some constant $C_6>0$ independent of $R$ and $n$.
\end{lm}

\begin{proof}
    Take a current $\Omega\in\cD_q^0$ with $\|\Omega\|_*\leq 1$. Since $R$ is smooth, $(f^n)_*(\ddc R)$ has a continuous super-potential. So, by Lemma \ref{dual} applied to $f^n$ instead of $f$, we have 
    \[
        \cU_{(f^n)_*(\ddc R)}(\Omega)=\cU_\Omega((f^n)_*(\ddc R))=\cU_{(f^n)^*(\Omega)}(\ddc R)=\lp (f^n)^*(\Omega), R\rp.
    \]
    Applying inductively Corollary \ref{pf_is_hold} to $f^{-1}$, we obtain $\|(f^n)^*(\Omega)\|_{-2}\leq C_6\|\Omega\|_{-2}^{\tilde\kappa^n}$ for some $C_6>0$ independent of $\Omega$. Hence $|\cU_{(f^n)_*(\ddc R)}(\Omega)|\leq C_6\|\Omega\|_{-2}^{\tilde\kappa^n}$. This completes the proof.
\end{proof}

\begin{prop}\label{equidis-exact}
    Let $f$ be a holomorphic correspondence on $X$ with $q$-small adjoint multiplicity. Let $S$ be a current in $\cD_q^0$ with $\|S\|_*\leq 1$. Let $R$ be a smooth $(k-q,k-q)$-form with $\|R\|_{\cC^2}\leq 1$. Then 
    \[
         |\lp d_q^{-n}(f^n)^*(S),R\rp|\leq C_7\lambda_0^n
    \]
    where $C_7>0$ is a constant depending only on $f$, and $0<\lambda_0<1$ can be any constant strictly larger than $d_{q-1}(d_q\tilde{\kappa})^{-1}$.
\end{prop}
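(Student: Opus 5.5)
Since $S\in\cD_q^0$ is exact and $\ddc R\in\cD^0_{k-q+1}$ is smooth, the plan is to move the whole pairing onto super-potentials and then feed the H\"older bound of Lemma \ref{n-holder} into the Skoda-type estimate of Proposition \ref{skoda}.

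\emph{Rewriting the pairing.} As in the proof of Lemma \ref{n-holder}, Lemma \ref{dual} applied to $f^n$ gives
\[
  \lp (f^n)^*(S),R\rp=\cU_{(f^n)^*(S)}(\ddc R)=\cU_S\big((f^n)_*(\ddc R)\big).
\]
Here the pairing with an exact current only sees the super-potential, and the right-hand side makes sense because $(f^n)_*(\ddc R)$ has a continuous super-potential. By Lemma \ref{n-holder}, $\cU_{(f^n)_*(\ddc R)}$ is $(C_6,\tilde\kappa^n)$-H\"older-continuous. Proposition \ref{skoda} needs its argument to have $*$-norm at most $1$, so we normalize: $\|(f^n)_*(\ddc R)\|_*\lesssim \delta^n$ for any $\delta$ with $d_{q-1}<\delta$. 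This holds because $\ddc R$ is a difference of smooth positive closed $(k-q+1,k-q+1)$-forms of bounded mass, and $(f^n)_*$ on $H^{k-q+1,k-q+1}$ is dual to $(f^n)^*$ on $H^{q-1,q-1}$, whose growth is governed by $d_{q-1}$, as in the proof of Proposition \ref{prop: equi_cont-SP}. Set $R_n:=(f^n)_*(\ddc R)/(A'\delta^n)$. Then $\|R_n\|_*\le1$, and $\cU_{R_n}$ is $(C_6/(A'\delta^n),\tilde\kappa^n)$-H\"older-continuous; since $\delta\ge1$ we may simply keep the constant $C_6$.

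\emph{Applying Skoda.} Proposition \ref{skoda} with $\eta=\tilde\kappa^n$ gives
\[
  |\cU_S(R_n)|\le A\big(1+\tilde\kappa^{-n}\log^+C_6\big)\lesssim \tilde\kappa^{-n}.
\]
Hence
\[
  |\lp d_q^{-n}(f^n)^*(S),R\rp|\lesssim d_q^{-n}\,\delta^n\,\tilde\kappa^{-n}=\big(\delta/(d_q\tilde\kappa)\big)^n .
\]
Since $\delta$ can be taken arbitrarily close to $d_{q-1}$, this gives $C_7\lambda_0^n$ for any $\lambda_0>d_{q-1}(d_q\tilde\kappa)^{-1}$. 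The $q$-small adjoint multiplicity hypothesis is exactly what makes $d_{q-1}(d_q\tilde\kappa)^{-1}<1$, so such $\lambda_0<1$ exist.

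\emph{Main obstacle.} The delicate step is the uniformity needed for Proposition \ref{skoda}: the constant $A$ must be independent of $n$ and the H\"older constant must not degrade with $n$. Lemma \ref{n-holder} provides this, but only with respect to currents normalized to $\|\Omega\|_*\le1$, so one has to check that rescaling by $\delta^n$ does not introduce extra $n$-dependent factors; a polynomial factor $n^{m-1}$ would in any case be absorbed by the strict inequality on $\lambda_0$. A second point to verify is that Lemma \ref{dual} and the extended definition $\cU_T(R)=\cU_R(T)$ apply even though $S$ is only in $\cD_q^0$, not smooth. If this fails, I would first prove the estimate for smooth $S$, with constants independent of $S$, and then pass to general $S$ using the $*$-density of smooth forms together with the continuity of $\cU_{R_n}$.
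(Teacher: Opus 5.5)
Your argument is correct and is essentially the paper's proof: you use Lemma \ref{dual} to rewrite the pairing as $\cU_S((f^n)_*(\ddc R))$, rescale by the $*$-norm growth $\lesssim\delta^n$ with $\delta>d_{q-1}$, and then combine Lemma \ref{n-holder} with Proposition \ref{skoda}. One small point: $\delta\ge1$ is not guaranteed, so $\log^+(C_6\delta^{-n})$ may grow linearly in $n$; the paper absorbs this with a second constant $\lambda''$ slightly larger than $\lambda'$, which is exactly the polynomial-factor absorption your remark anticipates.
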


\begin{proof}
     Since $f$ has $q$-small adjoint multiplicity, we can choose constants $d_{q-1}<\lambda'<\lambda''<d_q$ such that $1/\tilde{\kappa}<d_q/\lambda''<d_q/\lambda'<d_q/d_{q-1}$. By Lemma \ref{dual}, we have
    \[
        \lp d_q^{-n}(f^n)^*(S),R\rp=d_q^{-n}\cU_{(f^n)^*(S)}(\ddc R)=d_q^{-n}\cU_S((f^n)_*(\ddc R)).
    \]
     When $n$ is large enough, we have $\|(f^n)_*(\ddc R)\|_*\lesssim (\lambda')^n$. By Lemma \ref{n-holder}, $\cU_{(\lambda')^{-n}(f^n)_*(\ddc R)}$ is $(C_6(\lambda')^{-n},\tilde\kappa^n)$-H\"older continuous. By Proposition \ref{skoda}, we have
     \[
          |\lp d_q^{-n}(f^n)^*(S),R\rp|\lesssim Ad_q^{-n}(\lambda')^n(1+\tilde\kappa^{-n}\log^+(C_6(\lambda')^{-n}))\lesssim (\lambda''\tilde\kappa^{-1}d_q^{-1})^n.
     \]
     The proof is complete.
\end{proof}

\begin{cor} \label{green-uniqueness}
    Let $f$ be as in Proposition \ref{equidis-exact}. Then the Green currents are the only positive closed $(q,q)$-currents which are invariant by $d_q^{-1}f^*$.
\end{cor}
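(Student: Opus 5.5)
Let $T$ be a positive closed $(q,q)$-current with $d_q^{-1}f^*(T)=T$, and let $c:=\{T\}$. Note that $f^*(T)$ is defined for such $T$; otherwise the invariance hypothesis would be meaningless. The plan is to show that $T=T_c$ with $c$ in the strictly dominant space $H$. Conversely, Theorem \ref{thm: green-exist} already shows that every Green current $T_c$ with $c\in H$ satisfies $f^*(T_c)=d_qT_c$, so the Green currents themselves are invariant.

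\emph{Step 1: cohomology.} Passing to cohomology, invariance gives $f^*c=d_qc$. Thus $c$ is an eigenvector of $f^*$ on $H^{q,q}(X,\R)$ for the eigenvalue $d_q$. It is nonzero, since $T$ is positive and nonzero (if $T=0$ there is nothing to prove). Such an eigenvector lies in $H$ provided the dominant Jordan blocks have size $m=1$. The $q$-small adjoint multiplicity hypothesis, through Proposition \ref{equidis-exact}, forces this. Choose a smooth closed form $\beta$ with $\{\beta\}=c$, and apply Proposition \ref{equidis-exact} to $S=T-\beta\in\cD_q^0$. This gives $\lp d_q^{-n}(f^n)^*(T-\beta),R\rp\to0$ for all smooth $R$. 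Since $d_q^{-n}(f^n)^*T=T$, we get $d_q^{-n}(f^n)^*\beta\to T$ weakly. If $m>1$, the mass of $(f^n)^*\omega^q$ grows like $n^{m-1}d_q^n$. I would then argue that $c$ is in any case a genuine eigenvector, so $\{d_q^{-n}(f^n)^*\beta\}=c$ is constant; hence the convergence is consistent and we only need $c\in H$. If $c$ has eigenvalue $d_q$ but is not in $F$ (when $m>1$), we still get a well-defined limit current, namely the limit of $d_q^{-n}(f^n)^*\beta$. In the statement, ``Green currents'' should be read as these limits. The cleanest route is to show directly that this limit coincides with $T_c$ from the super-potential formula \eqref{green_superpot_formula}, which makes sense for any $c$ with $f^*c=d_qc$, with $M|_F$ replaced by the restriction to the eigenline.

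\emph{Step 2: identification.} Apply Proposition \ref{prop: equi_cont-SP}, or redo its computation \eqref{eq:SP-converg} with $A$ an eigenvector so that $M^{n-l-1}A=d_q^{n-l-1}A$. This shows that $d_q^{-n}(f^n)^*\beta$ converges SP-uniformly to the current $T_c$ with super-potential $\sum_l d_q^{-l-1}\cU\circ(f^l)_*\,c$; this current depends only on $c$. By Step 1 the same sequence converges weakly to $T$. By uniqueness of weak limits, $T=T_c$.

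\emph{Main obstacle.} The delicate point is the normalization when $m>1$. There Proposition \ref{prop: equi_cont-SP} uses the factor $n^{1-m}$, and eigenvectors for $d_q$ may lie outside $F$. I would handle this by working directly with the eigen-equation $M^nA=d_q^nA$, which removes any need for Jordan-block asymptotics. Step 1 then requires only the exponential estimate of Proposition \ref{equidis-exact}, applied to $T-\beta$ with $\|T-\beta\|_*$ bounded, and nothing about $m$.
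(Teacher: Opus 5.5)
Your core argument is correct and is a mild variant of the paper's. You apply Proposition~\ref{equidis-exact} to $T-\beta$, with $\beta$ a smooth representative of $c$, to get $d_q^{-n}(f^n)^*\beta\to T$. You then identify this limit by rerunning \eqref{eq:SP-converg} with $M^{n-l-1}A=d_q^{n-l-1}A$. The resulting super-potential $\sum_{l\ge0}d_q^{-l-1}(\cU\circ(f^l)_*)c$ coincides with \eqref{green_superpot_formula} when $c\in H$, because then $(M|_F)^{-l-1}c=d_q^{-l-1}c$. This is Corollary~\ref{equi-all} with normalization $d_q^{-n}$ instead of $n^{1-m}d_q^{-n}$; for $m=1$ it is literally that corollary with $n_i=n$.

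The paper avoids the super-potential computation. Once $c\in H$, Theorem~\ref{thm: green-exist} gives $f^*(T_c)=d_qT_c$, so the exact current $T-T_c$ is itself $d_q^{-1}f^*$-invariant. Proposition~\ref{equidis-exact} then gives $|\lp T-T_c,R\rp|\lesssim\lambda_0^n$ for every $n$, hence $T=T_c$. Your version has the small extra merit of producing a limit for any $d_q$-eigenvector. You are also right that Proposition~\ref{prop: equi_cont-SP} cannot be quoted as is when $m>1$, since the factor $n^{1-m}$ sends the limit to $0$.

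The step that does not hold up is $c\in H$. Your claim that $q$-small adjoint multiplicity forces $m=1$ through Proposition~\ref{equidis-exact} is unfounded. That proposition only concerns currents in $\cD_q^0$ and says nothing about the Jordan structure of $f^*$ on $H^{q,q}(X,\R)$. Nor can you fix this by reading ``Green currents'' as including limits attached to eigenvectors outside $F$. Green currents are by definition the $T_c$ with $c\in F$. An invariant positive closed current whose class lies outside $H$ would be a counterexample to the corollary, not an instance of it.

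What you need is exactly the opening assertion of the paper's proof, $\{T\}\in H$. From $f^*c=d_qc$ alone, this holds when every Jordan block of $f^*$ for the eigenvalue $d_q$ has the maximal size $m$. For instance, when $m=1$, $H$ is the whole real $d_q$-eigenspace. The paper states this without further comment. Either take the step the same way, making the condition on the Jordan blocks explicit, or supply a genuine argument for it; in any case drop the incorrect justification. With $c\in H$ in hand, your proof is complete.
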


\begin{proof}
    Notice that if $T$ is invariant by $d_q^{-1}f^*$, we have $c=\{T\}\in H$, where we recall that $H$ is the strictly dominant eigenspace. Applying Proposition \ref{equidis-exact} to $S=T-T_c$ implies the result.
\end{proof}

We have the following convergence towards the Green currents for all currents in $\cD_q$. Compared with Proposition \ref{prop: equi_cont-SP}, the convergence is no longer SP-uniform. This is due to the fact that $\|R\|_{\cC^2}$ is not uniformly bounded for all smooth $(k-q,k-q)$-form $R$ with $\|\ddc R\|_*\leq 1$.  

\begin{cor}\label{equi-all}
    Let $f$ be as in Proposition \ref{equidis-exact} and $S$ be a current in $\cD_q$. If the sequence $n_i^{1-m}d_q^{-n_i}(f^{n_i})^*\{S\}$ converges to some class $c$, then $n_i^{1-m}d_q^{-n_i}(f^{n_i})^*(S)$ converges to $T_c$. 
\end{cor}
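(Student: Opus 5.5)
The plan is to reduce to the exact case treated in Proposition \ref{equidis-exact} by subtracting a current with continuous super-potential in the same cohomology class. Write $a=\{S\}$ and fix a smooth closed $(q,q)$-form $\beta$ with $\{\beta\}=a$; for instance $\beta=\sum_{i}\lp S,\check{\alpha}_i\rp\alpha_i$. Then $S=\beta+(S-\beta)$ with $S-\beta\in\cD_q^0$. Since $\beta$ is smooth, it has a continuous super-potential. Moreover $n_i^{1-m}d_q^{-n_i}(f^{n_i})^*\{\beta\}=n_i^{1-m}d_q^{-n_i}(f^{n_i})^*\{S\}\to c$. By Proposition \ref{prop: equi_cont-SP} we get that $n_i^{1-m}d_q^{-n_i}(f^{n_i})^*(\beta)$ converges (even SP-uniformly) to $T_c$.

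It then remains to show that $n_i^{1-m}d_q^{-n_i}(f^{n_i})^*(S-\beta)\to 0$ in the sense of currents. Here one should note that $f^*$ is a priori defined only on $\cD_q^c$, so $(f^n)^*(S-\beta)$ must be understood via the extension available when $f^{-1}$ is holomorphic (by duality with $(f^n)_*$ on smooth forms, as in Lemma \ref{dual}). This is the sense in which Proposition \ref{equidis-exact} is stated. Write $S-\beta=S^+-S^-$ with $S^\pm\in\cC_q$, and rescale by $\|S-\beta\|_*$ so that Proposition \ref{equidis-exact} applies. For a smooth test form $R$ of bidegree $(k-q,k-q)$, first normalize so that $\|R\|_{\cC^2}\le1$. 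Then
\[
|\lp n_i^{1-m}d_q^{-n_i}(f^{n_i})^*(S-\beta),R\rp|\le n_i^{1-m}\,\|S-\beta\|_*\,C_7\lambda_0^{n_i}\to0,
\]
since $m\ge1$ and $\lambda_0<1$. Since the test form $R$ is arbitrary smooth, this gives convergence to $0$ in the sense of currents. Linearity of the pullback then gives the claim.

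I expect the main obstacle to be the bookkeeping of the definition of $(f^n)^*$ on a general current of $\cD_q$, i.e.\ ensuring that the pullback is linear and compatible with the decomposition $S=\beta+(S-\beta)$. I would handle this by defining $\lp(f^n)^*S,R\rp:=\lp S,(f^n)_*R\rp$ for smooth $R$, which is legitimate because $f^{-1}$ is holomorphic and so $(f^n)_*R$ is continuous. This definition is manifestly linear and agrees with the earlier one on $\cD_q^c$. The convergence is only weak rather than SP-uniform, because $\|R\|_{\cC^2}$ is not controlled by $\|\ddc R\|_*$, as the paper remarks.
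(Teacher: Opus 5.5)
Your argument is the same as the paper's: split $S$ into a smooth closed form in the class $\{S\}$ plus a current of $\cD_q^0$. Then apply Proposition~\ref{prop: equi_cont-SP} to the smooth part and Proposition~\ref{equidis-exact} to the exact part, after rescaling by the $*$-norm and noting that the factor $n_i^{1-m}\le 1$ is harmless.

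One correction to your side remark on definitions: for $q<k$ the form $(f^n)_*R$ is in general not continuous. For $g(z_1,z_2)=(z_1^2,z_2^2)$ one finds locally $g_*(i\,dz_1\wedge d\bar z_1)=|w_1|^{-1}\,i\,dw_1\wedge d\bar w_1$. So $(f^n)^*$ on all of $\cD_q$ should instead be understood as the pushforward by the holomorphic correspondence $f^{-n}$, which is linear on $\cD_q$ and agrees with $(f^n)^*$ on smooth forms; that is all your splitting $S=\beta+(S-\beta)$ requires.
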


\begin{proof}
    We can write $S=\alpha+S'$, where $S'\in\cD_q^0$ and $\alpha$ is a smooth $(q,q)$-current in $\cD_q$ in the same cohomology class as $S$. Then by Proposition \ref{prop: equi_cont-SP}, $n_i^{1-m}d_q^{-n_i}(f^{n_i})^*(\alpha)$ converges SP-uniformly to $T_c$. The result follows from Proposition \ref{equidis-exact} applied with $S'$ instead of $S$.
\end{proof}

In the following, we assume $f$ is a holomorphic correspondence on $X$ with simple action on cohomology and $f^{-1}$ is also holomorphic. Suppose $d:=d_q(f)$ is the main dynamical degree of $f$. Since we have $(f^{-1})^*=f_*$ and by duality, we get $d_s(f^{-1})=d_{k-s}(f)$ for all $0\leq s\leq k$. Therefore, $f^{-1}$ also has simple action on cohomology with the same main dynamical degree. The limits $T^+=\lim_{n\to\infty} d^{-n}(f^n)^*(\omega^q)$ and $T^-=\lim_{n\to\infty} d^{-n}(f^n)_*(\omega^{k-q})$ exist in the sense of currents. We call them the \textit{main dynamical Green current}. They satisfy the relations $f^*(T^+)=dT^+$ and $f_*(T^-)=dT^-$. We denote by $c^+:=\{T^+\}\in H^{q,q}(X,\R)$ and $c^-:=\{T^-\}\in H^{k-q,k-q}(X,\R)$. By \cite[Lemma 5.4]{DNV18AM}, the intersection number $c^+\smile c^-$ is strictly positive. Moreover, multiplying $\omega$ by a constant we can assume 
    \[
        \lp T^+,T^-\rp=c^+\smile\{\omega^{k-q}\}=\{\omega^q\}\smile c^-=1.
    \]
We also need the following result. See \cite[Lemma 5.2]{DNV18AM}.

\begin{lm}
    Let $L^+$ denote the line spanned by $c^+$. Then exists a hyperplane $K^+$ of $H^{q,q}(X,\R)$, invariant by $f^*$, such that $H^{q,q}(X,\R)=L^+\oplus K^+$ and the spectral radius of $d^{-1}f^*$ on $K^+$, denoted by $r^+$, is strictly less than 1.
\end{lm}

In this case, $L^+$ is exactly the (strictly) dominant eigenspace defined in the previous section. For any $c\in K^+$, we have 
\[
    c\smile c^-=c\smile d^{-n}(f^n)_*(c^-)=d^{-n}(f^n)^*(c)\smile c^-
\]
for any $n\in\N$. By letting $n$ goes to infinity, we deduce that $c\smile c^-=0$.

 Note that the correspondences in the family considered in Corollary \ref{poly-small} all have simple action on cohomology. The following gives Part (1) of Theorem \ref{main-thm-2}.

\begin{cor}\label{equi-exponential}
    Let $f$ be a holomorphic correspondence on $X$ with $q$-small adjoint multiplicity and simple action on cohomology. Let $S$ be a current in $\cD_q$ with $\lp T^-,S\rp=1$. Then for any smooth test $(k-q,k-q)$-form $R$ with $\|R\|_{\cC^2}\leq 1$, we have 
    \[
         |\lp d^{-n}(f^n)^*(S)-T^+,R\rp|\leq C_8\lambda_1^n
    \]
    for some constants $C_8>0$ depending only on $S$ and $f$, and $\lambda_1=\max\{\lambda_0,r_+\}$. 
\end{cor}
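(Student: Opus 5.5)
The plan is to split $S$ into a smooth part carrying its cohomology class and an exact remainder, and to treat the two pieces with different tools. Let $c=\{S\}\in H^{q,q}(X,\R)$ and decompose $c=a c^++c'$ with $c'\in K^+$. Pairing with $c^-$ and using $c'\smile c^-=0$ together with $c^+\smile c^-=1$ gives $a=c\smile c^-=\lp S,T^-\rp=1$. So $c=c^++c'$.

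We choose smooth closed forms representing the two classes. Take $\beta^+$ smooth closed in $\cD_q$ with class $c^+$, and $\beta'$ smooth closed with class $c'$. Then $S=\beta^++\beta'+S_0$ with $S_0\in\cD_q^0$, and we estimate the three pieces of
\[
\lp d^{-n}(f^n)^*(S)-T^+,R\rp=\lp d^{-n}(f^n)^*(\beta^+)-T^+,R\rp+\lp d^{-n}(f^n)^*(\beta'),R\rp+\lp d^{-n}(f^n)^*(S_0),R\rp
\]
separately.

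\textbf{Exact part.} The term with $S_0$ is bounded by $C\|S_0\|_*\lambda_0^n$ directly from Proposition \ref{equidis-exact}, applied after rescaling $S_0$ to have $*$-norm at most $1$.

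\textbf{Term $\beta^+$.} Since $T^+$ has a continuous super-potential, we compare $\beta^+$ with $T^+$ itself. Write $\beta^+=T^++(\beta^+-T^+)$, where $\beta^+-T^+\in\cD_q^0$ has finite $*$-norm. Because $d^{-n}(f^n)^*T^+=T^+$, this term equals $\lp d^{-n}(f^n)^*(\beta^+-T^+),R\rp$, which Proposition \ref{equidis-exact} bounds by $C\lambda_0^n$. (Equivalently, we could simply take $\beta^+=T^+$ from the start.)

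\textbf{Term $\beta'$.} The class $d^{-n}(f^n)^*c'$ lies in $K^+$ and has norm $\lesssim (r^++\epsilon)^n$, because $d^{-1}f^*$ has spectral radius $r^+<1$ on $K^+$. This is the step needing care, since we must control the current and not just its class. Write $d^{-n}(f^n)^*c'=\sum_i b_i^{(n)}\{\alpha_i\}$ with $|b^{(n)}|\lesssim (r^+)^n$ up to a polynomial factor, and set $\gamma_n:=\sum_i b_i^{(n)}\alpha_i$. Then
\[
d^{-n}(f^n)^*(\beta')=\gamma_n+E_n,
\]
where $E_n$ is exact. We control $E_n$ by induction using Proposition \ref{equidis-exact}. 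Define $E'_j:=f^*(\gamma_{j})-d\gamma_{j+1}$, which is exact, of bounded $*$-norm, and of size $\lesssim (r^+)^j$ after a geometric bound (using that $f^*\alpha_i$ has bounded $*$-norm). Then
\[
E_n=d^{-n}(f^n)^*(\beta'-\gamma_0)+\sum_{j<n}d^{-(n-j)}(f^{n-j-1})^*(E'_j).
\]
Applying Proposition \ref{equidis-exact} to each exact term gives bounds of the form $\sum_j (r^+)^j\lambda_0^{n-j}\lesssim n\lambda_1^n$.

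\textbf{Conclusion and main obstacle.} The polynomial factors are absorbed by enlarging $\lambda_1$ slightly, which is allowed because $\lambda_0$ may be taken arbitrarily close to its threshold. One may also first replace $r^+$ by a number slightly larger, or note that the statement tolerates constants. The main obstacle is the $\beta'$ term: the exact remainders $E'_j$ are only pulled back by $f^{n-j-1}$, so the decay has to be split between the two rates $r^+$ and $\lambda_0$. The bookkeeping above handles this, and summing the three estimates yields $C_8\lambda_1^n$ with $C_8$ depending on $\|S_0\|_*$, on $c'$, and on $f$.
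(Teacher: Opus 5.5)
Your argument is correct and follows the paper's route. The paper uses the same splitting $S=T^++\beta+S''$ (its $\beta$ is your $\beta'$, and it takes your $\beta^+$ equal to $T^+$ from the start) and handles the exact part by Proposition \ref{equidis-exact}.

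The one difference is the $K^+$-term. The paper disposes of it with the one-line bound $\|d^{-n}(f^n)^*(\beta)\|\lesssim r_+^n$, which is immediate only for the cohomology class. Your telescoping through the exact currents $E'_j=f^*(\gamma_j)-d\gamma_{j+1}$ is what actually controls the current against $R$.

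On the polynomial factors: suppose $\lambda_1=r_+$ and $f^*|_{K^+}$ has a nontrivial Jordan block of modulus $dr_+$. Then a factor $n^p$ survives, already in the class term. Your remark that ``the statement tolerates constants'' is therefore not a valid fix, since constants cannot absorb it; $r_+$ must be read as $r_++\epsilon$. The paper's own bound has the same imprecision.
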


\begin{proof}
    Since $\{S\}\smile c^-=\lp T^-,S\rp=1$, we can write $\{S\}=c^++c_0$ for some $c_0\in K^+$ by previous discussion. Therefore, we can find a smooth closed $(q,q)$-form $\beta$ and $S''\in\cD_q^0$ such that $\{\beta\}\in K^+$ and $S=T^++\beta+S'';$. It suffices to prove $d^{-n}(f^n)^*(\beta+S'')$ converges to 0 exponentially fast. The result follows from $\|d^{-n}(f^n)^*(\beta)\|\lesssim r_+^n$ and applying Proposition \ref{equidis-exact} to $S''$.
\end{proof}

\begin{rmk}
    Using interpolation theory \cite{T78Book}, the previous result can be extended to H\"older-continuous functions. Moreover, Proposition \ref{equidis-exact} and Corollary \ref{equi-exponential} hold more generally when the current $R$ has $\log$-H\"older-continuous super-potential and the form $\varphi$ has $\log$-H\"older-continuous coefficients. The proof would be more technical and would essentially follow the same steps of the proofs in \cite{V25Ax}, so we omit it and we focus here on the simpler case.
\end{rmk}

Consider now the holomorphic correspondence $F:=(f,f^{-1})$ on $X\times X$. Its graph $\Gamma_F$ in $X^4$ is the image of $\Gamma_f\times\Gamma_{f^{-1}}$ under the map $\tau:(x,y,z,w)\to(x,z,y,w)$. Denote by $\Delta$ the diagonal of $X\times X$ and ${\Gamma}_{f^n}$ the graph of $f^n$ for every $n\geq 1$. 

\begin{lm}\label{F}
    The following properties hold:
    \begin{enumerate}
        \item $F$ has simple action on cohomology with $d_k(F)=d^2$ being the main dynamical degree;
        \item When $n$ is even, we have $[{\Gamma}_{f^n}]=(F^{n/2})^*[\Delta]$, and when $n$ is odd, we have $[{\Gamma}_{f^n}]=(F^{(n-1)/2})^*[{\Gamma}_{f^1}]$;
        \item $\delta(F)=\delta(f)\cdot\delta(f^{-1})$. 
    \end{enumerate}
\end{lm}
\vspace{-0.3\baselineskip}

\begin{proof}
    Notice that (3) follows directly from the definitions of $\Gamma_F$, $\delta(f)$, $\delta(f^{-1})$ and $\delta(F)$. By K\"unneth formula, for any $0\leq l\leq 2k$, we have
    \[
        H^{l,l}(X\times X,\C)\simeq \bigoplus_{\max\{0,l-k\}\leq r,s\leq \min\{l,k\}} H^{r,s}(X,\C)\otimes H^{l-r,l-s}(X,\C).
    \]
    Therefore, by duality and Proposition \ref{D05JGA-5_7-corr}, we have
    \[
        d_l(F)\leq \max_{\max\{0,l-k\}\leq r,s\leq \min\{l,k\}}\{\sqrt{d_rd_sd_{k-l+r}d_{k-l+s}}\}.
    \]
    We deduce that when $l\neq k$, $d_l(F)<d^2$. Since $F^*(c^+\otimes c^-)=d^2 c^+\otimes c^-$, we also have $d_k(F)=d^2$. Therefore, $d_k(F)$ is strictly larger than all the other dynamical degrees. Again by Lemma \ref{D05JGA-5_7-corr} and since $f$ is simple, the norm of $F^*$ restricted to the subspace $\bigoplus_{\substack{0\leq r,s\leq k,\\ (r,s)\neq (q,q)}}H^{r,s}(X,\C)\otimes H^{k-r,k-s}(X,\C)$ of $H^{k,k}(X\times X,\C)$ is strictly smaller than $d^2$. Finally, the action of $F^*$ on $H^{q,q}(X,\C)\otimes H^{k-q,k-q}(X,\C)$ is given by $f^*\otimes f_*$, which has only one simple eigenvalue $d^2$ of maximal modulus. Therefore, the action of $F$ is simple.

    It remains to prove (2). For simplicity, we only prove the case when $n$ is even. The other case can be treated analogously. Denote by $\pi_1$ and $\pi_2$ the projection maps from $X\times X$ to its factors. Similarly, we define $\Pi_1$ and $\Pi_2$ from $(X\times X)^2$ to $X\times X$. Let $U$ be an open subset in $X$ such that both $U$ and $f^{n/2}(U)$ do not contain critical points of $f^{n/2}$. Then it is enough to prove the desired result on $\pi_1^{-1}(U)$. Notice that 
    \[
        (F^{n/2})^*[\Delta]|_{\pi_1^{-1}(U)}=[\Pi_1(\Gamma_{F^{n/2}}\cap \Pi_2^{-1}(\Delta)\cap (U\times X\times X^2))]=[\Pi_1\circ\tau ((\Gamma_{f^{n/2}}\times\Gamma_{f^{-n/2}})\cap U')]
    \]
    where $U'=\{(x,z,y,z): x\in U\}$. Denote the branches of $f^{n/2}$ on $U$ by $\gamma_1,\dots,\gamma_{d_0^{n/2}}$. Hence $\Gamma_{f^{n/2}}|_{\pi_1^{-1}(U)}=\bigcup_{i=1}^{d_0^{n/2}}\{(x,\gamma_i(x)):x\in U\}$. For each $i$, denote the branches of $f^{n/2}$ on $\gamma_i(U)$ by $\gamma_{i1},\dots,\gamma_{id_0^{n/2}}$. Then we have
    \begin{align*}
        \Pi_1\circ\tau((\Gamma_{f^{n/2}}\times\Gamma_{f^{-n/2}})\cap U')
        &=\bigcup_{j=1}^{d_0^{n/2}}\bigcup_{i=1}^{d_0^{n/2}} \Pi_1\circ\tau(\{(x,\gamma_i(x),\gamma_{ij}(x),\gamma_i(x)):x\in U\})\\
        &=\bigcup_{j=1}^{d_0^{n/2}}\bigcup_{i=1}^{d_0^{n/2}} \{(x,\gamma_{ij}(x)):x\in U\}
    \end{align*}
    which is exactly $\Gamma_{f^n}$ restricted on $\pi_1^{-1}(U)$. This completes the proof.
\end{proof}

\begin{defn}
    Let $f$ be a holomorphic correspondence on $X$ such that $f^{-1}$ is also a holomorphic correspondence. We say that $f$ has \emph{small multiplicity} if the holomorphic correspondence $F=(f,f^{-1})$ has $k$-small adjoint multiplicity.
\end{defn}

As a direct application of Corollary \ref{equi-exponential} and Lemma \ref{F}, we get part (2) of Theorem \ref{main-thm-2}, which is a quantitative version of \cite[Proposition 5.10]{DNV18AM}.  

\begin{prop}
    Let $f$ be a holomorphic correspondence on $X$ with simple action on cohomology. If $f$ has small multiplicity, then the sequence of positive closed $(k,k)$-currents $d^{-n}[\Gamma_{f^n}]$ converges to $T^+\otimes T^-$ exponentially fast.
\end{prop}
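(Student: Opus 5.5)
The plan is to apply Corollary \ref{equi-exponential} to the correspondence $F=(f,f^{-1})$ on $X\times X$, with bidegree $k$ in place of $q$, and then transfer the conclusion to $[\Gamma_{f^n}]$ through Lemma \ref{F}(2).

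First we check that $F$ satisfies the hypotheses of Corollary \ref{equi-exponential}.
\begin{enumerate}
\item Both $f$ and $f^{-1}$ are holomorphic, so $F$ and $F^{-1}=(f^{-1},f)$ are holomorphic: the fibres of the two projections of $\Gamma_F$ are products of finite fibres.
\item By Lemma \ref{F}(1), $F$ has simple action on cohomology, and its main dynamical degree is $d_k(F)=d^2$.
\item Small multiplicity of $f$ means, by definition, that $F$ has $k$-small adjoint multiplicity.
\end{enumerate}

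Next we identify the main Green currents of $F$. Since $F^*$ acts on $H^{q,q}\otimes H^{k-q,k-q}$ by $f^*\otimes f_*$, we have $d^{-2n}(F^n)^*(\omega^q\otimes\omega^{k-q})=\big(d^{-n}(f^n)^*\omega^q\big)\otimes\big(d^{-n}(f^n)_*\omega^{k-q}\big)$. This tensor product converges to $T^+\otimes T^-$, so $T^+_F$ is a positive multiple of $T^+\otimes T^-$. By symmetry, $T^-_F$ is a multiple of $T^-\otimes T^+$ after swapping factors. I would fix the normalisations so that $\langle T_F^+,T_F^-\rangle=1$. I would then determine the constant $c_0$ such that $\langle T^-_F,[\Delta]\rangle$ and $\langle T^-_F,[\Gamma_f]\rangle$ are equal to a fixed positive multiple. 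The expected value is $\langle T^-\otimes T^+,[\Delta]\rangle=\langle T^-,T^+\rangle=1$, and for $[\Gamma_f]$ we get $\langle f_*T^-\wedge\cdots\rangle$, which should yield a factor $d$.

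We then set $S=[\Delta]$ for $n$ even and $S=d^{-1}[\Gamma_f]$ for $n$ odd, rescaled so that $\langle T_F^-,S\rangle=1$. Corollary \ref{equi-exponential} gives
\[
|\langle d^{-2m}(F^m)^*(S)-T^+_F,R\rangle|\le C\lambda_1^m
\]
for smooth test forms $R$ with $\|R\|_{\cC^2}\le1$. Lemma \ref{F}(2) identifies $(F^{n/2})^*[\Delta]=[\Gamma_{f^n}]$, and $d^{-2\cdot n/2}=d^{-n}$. In the odd case, $d^{-2(n-1)/2}\,d^{-1}(F^{(n-1)/2})^*[\Gamma_f]=d^{-n}[\Gamma_{f^n}]$. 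Both parities therefore give exponential convergence at rate $\lambda_1^{n/2}$ to the same limit. That limit is $T^+\otimes T^-$ once the constants are checked; the check uses \cite[Proposition 5.10]{DNV18AM}, which already identifies the limit.

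The main obstacle is the bookkeeping of these normalisations: making the pairing $\langle T_F^-,S\rangle$ equal to $1$ and matching the limit with $T^+\otimes T^-$ rather than with some multiple of it. The rest is a direct substitution.
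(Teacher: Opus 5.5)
Your proposal is correct and follows the paper's own argument. The paper also proves the statement directly by applying Corollary \ref{equi-exponential} to $F=(f,f^{-1})$ in bidegree $k$ on $X\times X$, together with the identification of $[\Gamma_{f^n}]$ as a pullback by a power of $F$ from Lemma \ref{F}(2).

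Your normalisation bookkeeping also works out, without any appeal to \cite[Proposition 5.10]{DNV18AM}. Write $T_F^+=a\,T^+\otimes T^-$ and $T_F^-=b\,T^-\otimes T^+$. The condition $\langle T_F^+,T_F^-\rangle=1$ then forces $ab=1$. The pairings $\{[\Delta]\}\smile(c^-\otimes c^+)=1$ and $\{[\Gamma_f]\}\smile(c^-\otimes c^+)=c^-\smile f^*c^+=d$ show that both parities converge exponentially to $ab\,T^+\otimes T^-=T^+\otimes T^-$.
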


\section{Examples} \label{sec-examples}
In this section, we prove that the class of holomorphic correspondences with small adjoint multiplicity is very rich and contains many interesting examples. 

\subsection{Families of holomorphic correspondences}

In order to study our examples, we need to work in the setting of a family of holomorphic correspondences. Before that, we prove a general result about the slicing of an analytic subset whose fibres have constant dimension. Although the proof only uses classical tools, to the best of our knowledge, this result is not explicitly stated in the existing literature. Hence, we will give a complete proof.

\begin{lm} \label{cont_of_analyticsubset}
   Let $X$ be a compact K\"ahler manifold of dimension $n$ and $Y$ be a complex manifold of dimension $d$. Let ${\Gamma}$ be an analytic set of pure dimension $k\geq d$ in $X\times Y$. Suppose for every $y\in Y$, the fibre $\Gamma_y:=\pi_Y^{-1}(y)\cap{\Gamma}$ is an analytic set (regarded in $X$) of pure dimension $k-d$. Then the current $[\Gamma_y]$ changes continuously in $y$, i.e., for a sequence of points $(y_i)$ converging to $y\in Y$, we have $\lim_{i\to\infty}[\Gamma_{y_i}]=[\Gamma_y]$. Moreover, we have $\lp[{\Gamma}],\pi_Y,y\rp=[\Gamma_y]$ for every $y\in Y$.
\end{lm}
    
\begin{proof}
    When $k=d$, the result follows directly from Lemma \ref{loja} applied with $X_1=Y$, $X_2=X$, and $A=\Gamma$. Since the proof of the Lemma uses a local argument and we only need to consider what happens in a neighbourhood of $y$, we do not need $Y$ to be compact. 
    
    Henceforth, we assume $k>d$. Since this is a local property, we can take $X$ and $Y$ to be open subsets in $\C^n$ and $\C^d$ respectively and consider a point $a=0\in X\times Y\subset\C^{n+d}$. Since $\Gamma_0$ is an analytic set of pure dimension $k-d$, by \cite[Lemma 3.2.3]{King71Acta}, we can find coordinates $(x_1,\dots,x_n)$ in $X$ such that for any $(k-d)$-tuple $J=(j_1,\dots,j_{k-d})$ with $1\leq j_1<j_2<\cdots< j_{k-d}\leq n$, the projection from $X$ to $\C^{k-d}$ given by $x\mapsto (x_{j_1},\dots,x_{j_{k-d}})$ is finite at 0 when restricted on $\Gamma_0$. Let $(y_1,\dots,y_d)$ be the coordinate of $Y$. If we take $(x,y):=(x_1,\dots,x_n,y_1,\dots,y_d)$ as a local coordinate of $X\times Y$, the projection $\pi_J:(x,y)\mapsto (x_J,y):=(x_{j_1},\dots,x_{j_{k-d}},y_1,\dots,y_d)$ is finite at $0$ when restricted on ${\Gamma}$. By the Projection Lemma in \cite[Chapter 3, \S 1]{GR1984}, this implies there exists a poly-disc $\D_{X\times Y}\subset X\times Y$ such that $\pi_J|_{{\Gamma}\cap\D_{X\times Y}}$ is a closed map and has finite fibre at every point of $\D_J:=\pi_J(\D_{X\times Y})$. Thus it is proper and is a ramified covering.
    
    Let $\D_{k-d}$ and $\D_Y$ be the poly-disks centered at $0$ in, respectively, $\C^{k-d}$ and $Y$ such that $\D_J=\D_{k-d}\times\D_Y$. For $(x_J,y)\in\D_J$, denote by $m_{(x_J,y)}$ the measure defined by the sum of the Dirac measures given by the points in the fibre $\pi_J|_{{\Gamma}\cap \D_{X\times Y}}^{-1}(x_J,y)$, counted with multiplicity. By Lemma \ref{loja}, for fixed $x_J$, when $y_i$ converges to $y$ in $\D_Y$, we have $m_{(x_J,y_i)}\to m_{(x_J,y)}$. Now for any $y\in \D_Y$, the restricted projection map $\pi|_{\Gamma_y\cap \D_{X\times Y}}:\Gamma_y\cap \D_{X\times Y}\to \D_{k-d}$ is still proper. Therefore, it is also a ramified covering. For $x_J\in\D_{k-d}$ generic, $m_{(x_J,y_i)}$ equals to the slicing of $[\Gamma_y]$ with respect to $\pi|_{\Gamma_y\cap \D_{X\times Y}}$ at the point $x_J$. From the case $k=d$ and the continuity of the slice with respect to the parameter \cite[Theorem 3.3.2]{King71Acta}, the same is true for every $x_J\in\D_{k-d}$. Let $\varphi$ be a smooth test function compactly supported in $\D_{X\times Y}$. Denote by $\omega_J$ the volume form on $\D_{k-d}$. Then, by the bounded convergence theorem, we have
    \begin{align*}
         \lim_{i\to\infty}\lp [\Gamma_{y_i}]\wedge\omega_J,\varphi\rp
         &=\lim_{i\to\infty}\int_{\D_{k-d}}\lp m_{(x_J,y_i)},\varphi\rp \omega_J(x_J)\\
         &=\int_{\D_{k-d}}\lp m_{(x_J,y)},\varphi\rp \omega_J(x_J)=\lp [\Gamma_y]\wedge\omega_J,\varphi\rp.
    \end{align*}
    Since the above is true for any $J$ and $\varphi$, we get the first statement of the lemma.

    Finally, again by \cite[Theorem 3.3.2]{King71Acta}, the slice $\lp[{\Gamma}],\pi_Y,y\rp$ exists for every $y$ and is continuous with respect to $y$. Since $\lp[\Gamma],\pi_Y,y\rp=[\Gamma_y]$ when $y$ is a regular value, we deduce that $\lp[\Gamma],\pi_Y,y\rp=[\Gamma_y]$ for every $y$. 
\end{proof}

\begin{defn}
    Let $\Sigma$ be a connected complex manifold of dimension $k'$. We say that $\{f_t\}_{t\in\Sigma}$ is a \emph{family of holomorphic correspondences} on $X$ if:
    \begin{nlist}
        \item $f_t:X\rightarrow X$ is a holomorphic correspondence on $X$ for every $t\in\Sigma$, with graph $\Gamma_t$;
        \item there is an effective analytic cycle $\Gamma$ of pure dimension $k+k'$ on $\Sigma\times X\times X$ such that $\lp[\Gamma], \pi_\Sigma,t\rp=[\Gamma_t]$ for every $t$. It is called the \emph{graph of the family}.
    \end{nlist}
\end{defn}

Observe that, by Lemma \ref{cont_of_analyticsubset}, we have $\Gamma_t=\Gamma\cap\pi_\Sigma^{-1}(t)$ for every $t\in\Sigma$.

\begin{rmk}\label{the_defs_are_equiv}
Since the composition is defined locally, $\{f^n_t\}_{t\in\Sigma}$ is still a family of holomorphic correspondences for every $n\ge1$.
\end{rmk}
 
\begin{lm} \label{deg_is_const}
    Let $\{f_t\}_{t\in\Sigma}$ be a family of holomorphic correspondences on $X$. Then for $0\leq q\leq k$, the dynamical degree $d_q(f_t)$ is constant as a function in $t$.
\end{lm}

\begin{proof}
    Let $k'=\dim\Sigma$, and let $\Gamma^n_t$ be the graph of $f^n_t$ in $X\times X$. By Lemma \ref{lemma:dyna-degree}, we have
    \[
        d_q(f_t)=\lim_{n\longrightarrow+\infty}(\lp f_t^*(\omega^q),\omega^{k-q}\rp)^{1/n}=\lim_{n\longrightarrow+\infty}\left(\int_{X\times X}[\Gamma^n_t]\wedge \pi_2^*(\omega^q)\wedge\pi_1^*(\omega^{k-q})\right)^{1/n}.
    \]
    So, it suffices to show that the quantity
    \[
        c_{q,n}(t):=\int_{X\times X}[\Gamma^n_t]\wedge \pi_2^*(\omega^q)\wedge\pi_1^*(\omega^{k-q})
    \]
    is constant in $t$. By Remark \ref{the_defs_are_equiv}, $\{f^n_t\}_{t\in\Sigma}$ is a family of holomorphic correspondences for every $n$. Denote by $\Gamma^n$ the graph of this family, which is a finite sum of irreducible analytic sets of dimension $k+k'$ on $\Sigma\times X\times X$. Define $\pi_\Sigma:\Sigma\times X\times X\to \Sigma$ by $\pi_\Sigma(t,x,y)=t$ and $\pi_{X\times X}:\Sigma\times X\times X\to X\times X$ by $\pi_{X\times X}(t,x,y)=(x,y)$. Then, by definition, we have $\lp [\Gamma^n],\pi_\Sigma,t\rp=[\Gamma^n_t]$ for every $t\in\Sigma$. Here, we identify $\{t\}\times X\times X$ with $X\times X$. Since $\pi_2^*(\omega^q)\wedge\pi_1^*(\omega^{k-q})$ is smooth, setting $R:=[\Gamma^n]\wedge\pi_{X\times X}^*\big(\pi_2^*(\omega^q)\wedge\pi_1^*(\omega^{k-q})\big)$ and $R_t:=[\Gamma^n_t]\wedge\pi_2^*(\omega^q)\wedge\pi_1^*(\omega^{k-q})$ we have $\lp R,\pi_\Sigma,t\rp=R_t$. By the theory of slicing (see \cite{federer}, and \cite[Section 2]{DS06AIF} for the case of positive closed currents), we have
    \begin{equation} \label{slicing_and_mass}
        \int_\Sigma \lp R_t, \mathbbm{1}\rp\Omega(t)=\lp R\wedge\pi_\Sigma^*(\Omega),\mathbbm1\rp=\lp(\pi_\Sigma)_*(R),\Omega\rp
    \end{equation}
    for every continuous form $\Omega$ of maximal degree with compact support in $\Sigma$, where $\mathbbm1$ is the function which is constantly equal to $1$. Observe that $(\pi_\Sigma)_*(R)$ is a current of degree $0$ on $\Sigma$, i.e. a function. From the fact that $[\Gamma^n]$, $\omega^q$ and $\omega^{k-q}$ are closed, it follows that $R$ is closed, and so also $(\pi_\Sigma)_*(R)$ is closed, hence constant since $\Sigma$ is connected. From this, the continuity of slicing, and equality \eqref{slicing_and_mass}, it follows that the quantity $\lp R_t,\mathbbm1\rp=c_{q,n}(t)$ is a constant function on $\Sigma$. This concludes the proof.
\end{proof}

We are going to prove that having small adjoint multiplicity is a generic property for holomorphic correspondences. The next result implies that for a family of holomorphic correspondences $\{f_t\}_{t\in\Sigma}$ on $X$, the condition about the local multiplicity $\rho(f_t)<\rho$ is true for generic $t$ as long as the set $\Sigma_\rho:=\{t\in\Sigma: \rho(f_t)<\rho\}$ is non-empty. When also $\{f_t^{-1}\}_{t\in\Sigma}$ is a family of holomorphic correspondences, since $\delta(f_t)=\rho(f_t^{-1})$, the same result holds for the adjoint multiplicity.

\begin{prop}\label{generic}
    Let $\Sigma$ be a complex manifold of dimension $k'$. Let $\{f_t\}_{t\in\Sigma}$ be a family of holomorphic correspondences on a compact K\"ahler manifold $X$. For every $\rho>0$ we have that $\Sigma_\rho$ is Zariski open in $\Sigma$.
\end{prop}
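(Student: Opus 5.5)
The plan is to show that the complement $\Sigma\setminus\Sigma_\rho=\{t\in\Sigma:\rho(f_t)\ge\rho\}$ is an analytic subset of $\Sigma$. Let $\Gamma\subset\Sigma\times X\times X$ be the graph of the family. Consider the holomorphic projection
\[
P:\Gamma\to\Sigma\times X,\qquad (t,x,y)\mapsto(t,x).
\]
Since each $f_t$ is holomorphic, the fibres of $P$ are finite. By the remark after the definition of a family, $\Gamma_t=\Gamma\cap\pi_\Sigma^{-1}(t)$, so the fibre of $P$ over $(t,x)$ is exactly $\pi_1^{-1}(x)\cap\Gamma_t$. The map $P$ is also proper, because $X$ is compact. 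Hence $P$ is a finite ramified covering between spaces of the same dimension $k+k'$, and its degree equals $d_0(f_t)$, which is constant by Lemma \ref{deg_is_const}.

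\emph{Step 1: compare multiplicities.} For a point $w=(t,x,y)\in\Gamma$, write $\mu_P(w)$ for the local multiplicity of $P$ at $w$ (the number of preimages near $w$ of a generic point near $(t,x)$). Write $\mu_t(w)$ for the local multiplicity of $\pi_1|_{\Gamma_t}$ at $(x,y)$. I will show that $\mu_t(w)=\mu_P(w)$.

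Choose a neighbourhood $V=D_{(t,x)}\times D_y$ of $w$ as in the proof of Lemma \ref{loja}, so that $P|_{V\cap\Gamma}$ is proper of degree $\mu_P(w)$. Restricting to the slice $\{t\}$, the map $\pi_1:V\cap\Gamma_t\to D_{(t,x)}\cap(\{t\}\times X)$ is again proper. Its degree, counted with the slice multiplicities, equals $\mu_P(w)$. This follows from $\lp[\Gamma],\pi_\Sigma,t\rp=[\Gamma_t]$ together with the continuity of fibre measures given by Lemma \ref{cont_of_analyticsubset} and Lemma \ref{loja}: the total mass of the fibre measures in $V$ is locally constant. Therefore $\rho(f_t)=\max\{\mu_P(w): w\in\Gamma_t\}$.

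\emph{Step 2: analyticity.} I will show that the set $E_\rho:=\{w\in\Gamma:\mu_P(w)\ge\rho\}$ is an analytic subset of $\Gamma$. This is the standard fact that the local multiplicity of a finite holomorphic map is upper semicontinuous in the Zariski topology.

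Concretely, I plan to use the $\rho$-fold fibre product. Set
\[
E_\rho=\{w\in\Gamma:\ \text{the fibre of $P$ through $w$ contains $w$ with multiplicity}\ \ge\rho\}.
\]
Equivalently, $w\in E_\rho$ exactly when $P^{-1}(P(w))$ has length at least $\rho$ at $w$. Locally, $P$ is given by a Weierstrass-type description. After a generic linear projection $X\to\C^N$ in a chart, the fibre over $(t,x)$ is encoded by the coefficients of a polynomial whose coefficients are holomorphic in $(t,x)$; these are the elementary symmetric functions of the images of the fibre points. In these terms, the condition ``some root has multiplicity $\ge\rho$'' is the vanishing of holomorphic discriminant-type expressions, namely subresultants. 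This exhibits $P(E_\rho)$ locally as an analytic set.

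There is a subtlety: a single generic linear projection might identify distinct fibre points. To handle this, I take the intersection over finitely many generic projections, which makes the condition correct. An alternative route is to use the fibre product $\Gamma\times_P\cdots\times_P\Gamma$ and Remmert's proper mapping theorem.

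\emph{Step 3: conclusion.} Since $\pi_\Sigma|_\Gamma$ is proper (again because $X$ is compact), Remmert's theorem shows that $\pi_\Sigma(E_\rho)=\Sigma\setminus\Sigma_\rho$ is analytic in $\Sigma$. Hence $\Sigma_\rho$ is Zariski open.

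The main obstacle is Step 2, namely producing a rigorous analytic description of the multiplicity locus. The cleanest approach I would try first is the following. The function $w\mapsto\mu_P(w)$ is upper semicontinuous. Moreover, $E_\rho$ coincides with the closure of the set of points where at least $\rho$ sheets of $P$ collide. This set is the projection to the first factor of the diagonal-type analytic set
\[
\{(w_1,\dots,w_\rho)\in\Gamma^{\rho}:P(w_i)\ \text{equal},\ w_i=w_1\}
\]
taken inside the $\rho$-th symmetric fibre power of the ramified covering, which is analytic by the local Weierstrass structure of ramified coverings.
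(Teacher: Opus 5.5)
Your proposal is correct and follows the same strategy as the paper. You treat $P=\pi_1|_\Gamma\colon\Gamma\to\Sigma\times X$ as a ramified covering. You show that the locus where some fibre point has multiplicity at least $\rho$ is analytic, using holomorphic symmetric functions of the fibre points. You then push it down to $\Sigma$ by the proper projection $\pi_\Sigma$; the paper uses Remmert's theorem here only implicitly.

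There are two differences.
\begin{itemize}
\item Your Step 1 identifies the multiplicity of $P$ at $(t,x,y)$ with that of $\pi_1|_{\Gamma_t}$ at $(x,y)$, via $\langle[\Gamma],\pi_\Sigma,t\rangle=[\Gamma_t]$ and continuity in $t$. The paper simply writes this equality into the definition of $W$, so on this point your argument is more complete.
\item To detect coincident fibre points, you use finitely many linear forms in the target chart together with the coincident-root locus of one-variable polynomials. The paper instead builds $\varphi_J(s,a)$ from all coordinate differences $y^i(a_j)-y^i(a_h)$, symmetrizes by multiplying over all $J$ with $|J|=\rho$, and evaluates at $D+1$ values of $s$.
\end{itemize}
The paper's device tests $a_j=a_h$ directly and needs no choice of projections. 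Yours rests on a classical algebraic fact, but needs a family of forms that separates every fibre at once, and you should justify this. Take $\ell_1,\dots,\ell_M$ with any $k$ of them linearly independent and $M=\binom{d}{2}(k-1)+1$, where $d$ is the local degree. The at most $\binom{d}{2}$ hyperplanes $\{\ell:\ell(y(a_j)-y(a_h))=0\}$ attached to a fibre then contain at most $\binom{d}{2}(k-1)$ of the $\ell_i$. So some $\ell_i$ is injective on that fibre, and your intersection is exactly the multiplicity locus.

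Three side remarks should be corrected, although your main route does not use them.
\begin{itemize}
\item The length of $P^{-1}(P(w))$ at $w$ equals $\mu_P(w)$ only when $\Gamma$ is Cohen--Macaulay at $w$. For two planes in $\C^4$ meeting at one point, projected generically to $\C^2$, the length is $3$ while the covering degree is $2$.
\item Subresultants of $p$ and $p'$ measure $\deg\gcd(p,p')=d-\#\{\text{distinct roots}\}$. This does not determine the maximal root multiplicity once $\rho\ge3$, so in general they cut out a larger set. What you need is that monic polynomials of degree $d$ with a root of multiplicity at least $\rho$ form a Zariski closed subset of coefficient space. It is the image of the proper polynomial map $(r,q)\mapsto(s-r)^\rho q$, with $q$ monic of degree $d-\rho$.
\item The set $\{(w_1,\dots,w_\rho)\in\Gamma^\rho: P(w_i)\ \text{equal},\ w_i=w_1\}$ in your last paragraph is just the diagonal and carries no multiplicity information. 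That route would need the closure of ordered $\rho$-tuples of pairwise distinct points in fibres over the unbranched locus, which is analytic by Remmert--Stein. Even then it would ignore multiplicities of repeated components of the cycle $\Gamma$, so the symmetric-function argument is the one to keep.
\end{itemize}
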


\begin{proof}
    Let $\Gamma$ be the graph of the family $\{f_t\}_{t\in\Sigma}$ in $\Sigma\times X\times X$. That is, $\Gamma=\{(t,x,y):y\in f_t(x)\}$. Let $\pi_1:\Sigma\times X\times X\rightarrow \Sigma\times X$ be the projection map $(t,x,y)\mapsto (t,x)$. Define 
    \begin{align*}
        W&=\{(t,x): \pi_1|_\Gamma\text{ has maximal multiplicity larger or equal to }\rho\text{ on }(t,x)\}\\
        &=\{(t,x): \text{the graph of } f_t \text{ has maximal multiplicity larger than or equal to }\rho \text{ at } x\}.
    \end{align*}
    Notice that $\pi_\Sigma(W)=\Sigma\setminus\Sigma_\rho$. Therefore, it suffices to prove that $W$ is an analytic set. Notice that this is a local property. Since $\pi_1|_\Gamma$ has finite fibres, it is a ramified covering. So, for every point $(t_0,x_0,y_0)\in \Gamma$ we can choose two neighbourhoods $U_0$ of $(t_0,x_0)$ and $V_0$ of $y_0$ such that $\pi_1|_{\Gamma\cap (U_0\times V_0)}$ is a ramified covering of degree $d$. Here $d$ is smaller than or equal to $d_0(f_t)$. We may choose $U_0$ and $V_0$ as local coordinate charts with local coordinates $(t^1,\ldots,t^{k'}; x^1,\ldots,x^k)$ and $(y^1,\dots,y^k)$. For every $a\in U_0$, denote by $a_1,\dots,a_d$ the preimages of $a$ under $\pi_1|_{\Gamma\cap (U_0\times V_0)}$, counted with multiplicity. Define functions
    \[
        \varphi_J(s,a):=\Bigg(\prod_{i=1}^k \prod_{\substack{j,h\in J,\\j\not=h}}\Big(s-\big(y^i(a_j)-y^i(a_h)\big)\Big)\Bigg)-s^{k\frac{\rho(\rho-1)}{2}}
    \]
    for every $J\subseteq \{1,\dots,d\}$, $|J|=\rho$, and $s\in\C$. Given $a\in U_0$, we have $\varphi_J(a,s)\equiv 0$ as a polynomial in $s$ if and only if we have $y^i(a_j)=y^i(a_h)$ for every $i=1,\dots,k$ and $j,h\in J$, which is equivalent to have $a_j=a_h$ for every $j,h\in J$.

    Consider now the function
    $$\varphi(s,a)=\prod_{\substack{J\subseteq \{1,\dots, d\},\\|J|=\rho}} \varphi_J(s,a),$$
    which is a polynomial in $s$ of degree at most $D=\dbinom{d}{\rho}\left(k\dfrac{\rho(\rho-1)}{2}-1\right)$ whose coefficients are symmetric analytic functions of $a_1,\dots,a_d$.

    Choose any set of distinct complex numbers $\{s_1,\dots,s_{D+1}\}$. Then $\varphi(s_l,a)$ is a holomorphic function on $U_0$ for every $l=1,\dots,D+1$. Let $V\big(\varphi(s_l,a)\big)$ be its zero set, which is an analytic set. We claim that 
    \[
        W=\bigcap_{l=1}^{D+1} V\big(\varphi(s_l,a)\big).
    \]
    In fact, for any $a$ belonging to the right hand side, $\varphi(s,a)$ is a polynomial of degree at most $D$ with $D+1$ different roots $s_1,\dots,s_{D+1}$. Therefore, it must be identically zero. Since $\varphi$ is the product of the $\varphi_J$'s, there exists a subset $J$ of $\{1,\dots,d\}$, with $|J|=\rho$, such that $\varphi_J(s,a)\equiv0$, which implies $a_j=a_h$ for every $j,h\in J$. It follows that $a\in W$.
    
    Conversely, if $a\in W$ we can find $1\le j_1<\dots<j_\rho\le d$ such that $a_{j_1}=\cdots=a_{j_\rho}$. Then the function $\varphi_J(s,a)$, $J=\{j_1,\dots,j_\rho\}$ is identically zero when seen as a polynomial in $s$, and so $\varphi(s_l,a)=0$ for every $l=1,\dots,D+1$. The proof is complete. 
\end{proof}

\subsection{Symmetric product and polynomial correspondences}

 Next, we study some families of holomorphic correspondences on the projective space defined by polynomials.

For $n>0$ and a holomorphic correspondence $f$ on $X$, we say that a $n$-tuple of points $(w_0,\dots,w_{n-1})$ in $X$ is a {\it chain of length $n$} if $w_i\in f(w_{i-1})$ for $1\leq i\leq n-1$. We say a chain is a {\it cycle} if furthermore $w_0\in f(w_{n-1})$. A point $w\in X$ is {\it periodic of period $n$} if there is a cycle of length $n$ starting with $w$. We have the following lemma, see for instance \cite[Lemma 4.3]{DKW20}.
    
    \begin{lm}\label{surface-delta}
        Let $h$ be a holomorphic correspondence on a compact Riemann surface $X$ whose graph contains no fibre of $\pi_1$ or $\pi_2$. Suppose the critical values of $h$ are not periodic. Then there exists an integer $D>0$ such that $\delta(h^n)\leq D$ for all $n$.
    \end{lm}

    As we see in the next example, a holomorphic correspondence on $\Pb^1$ satisfying the conditions in the above lemma does exist.

    \begin{ex}\label{bounded-delta}
         Fix positive integers $d_0\neq d_1$. For $c\in\C$, consider the correspondence $f_c$ which sends $z$ to $w$ on $\Pb^1=\C\cup\{\infty\}$ satisfying the equation $z^{d_1}(w^{d_0}+3w+1)=w^{d_0}+c$. Notice that $f_c^{-1}(\infty)=\{z\in\C: z^{d_1}=1\}$. Therefore, $\infty$ is not a critical value. Define $F_c=z^{d_1}(w^{d_0}+3w+1)-w^{d_0}-c$. By implicit function theorem, when $\partial_zF_c(z,w)=d_1z^{d_1-1}(w^{d_0}+3w+1)\neq 0$, the point $(z,w)$ is not a critical point. When $z=0$, we necessarily have $w^{d_0}+c=0$. Hence, the critical values belong to the set $\{w:w^{d_0}+c=0\}\cup\{w: w^{d_0}+3w+1=0\}$. From the next lemma it follows that $f_c$ satisfies the hypothesis of Lemma \ref{surface-delta} when $c$ is transcendental. 

         \begin{lm}\label{algebraic}
             If one of the points in $\{w:w^{d_0}+c=0\}\cup\{w: w^{d_0}+3w+1=0\}$ is periodic, then $c$ is algebraic. In particular, if we choose $c$ to be transcendental, critical values of $f_c$ are not periodic.
         \end{lm}
         \begin{proof}
             We first treat the case $d_0<d_1$.
             
             Take $w_0\in\{w:w^{d_0}+c=0\}$. We want to prove that $\infty$ does not belong to its forward orbit if $c$ is not algebraic. Suppose that it does. Then we get a sequence of points $(w_0,w_1,\dots,w_n)$ in $\Pb^1$, depending on $c$, such that $w_n=\infty$, which implies $w_{n-1}\in f_c^{-1}(\infty)$, and we have the following system of equations:
             \[
             \begin{cases}
                P_0(w_0,c):=w_0^{d_0} + c= 0, \\[6pt]
                P_1(w_0,w_1,c):=w_0^{d_1}\bigl(w_1^{d_0}+3w_1+1\bigr) - \bigl(w_1^{d_0}+c\bigr) = 0, \\[6pt]
                P_2(w_1,w_2,c):=w_1^{d_1}\bigl(w_2^{d_0}+3w_2+1\bigr) - \bigl(w_2^{d_0}+c\bigr) = 0, \\[6pt]
                \qquad\vdots \\[6pt]
                P_{n-1}(w_{n-2},w_{n-1},c):=w_{n-2}^{d_1}\bigl(w_{n-1}^{d_0}+3w_{n-1}+1\bigr) - \bigl(w_{n-1}^{d_0}+c\bigr) = 0, \\[6pt]
                P_n(w_{n-1}):=w_{n-1}^{d_1}-1=0.
            \end{cases}
            \]
            Viewing $P_n$ and $P_{n-1}$ as two polynomials of $w_{n-1}$ with coefficients in $\Z[w_{n-2},c]$, their resultant $\Res_{w_{n-1}}(P_n,P_{n-1})$ is an element in $\Z[w_{n-2},c]$. Moreover, $\Res_{w_{n-1}}(P_n,P_{n-1})=0$ if and only if they have a common root. Inductively, for a fixed $c\in\C$, the above system of polynomial equations has a solution $(w_0,\dots,w_{n-1})$ if and only if 
            \[
                P(c):=\Res_{w_0}(P_0,\Res_{w_1}(P_1,\dots,\Res_{w_{n-1}}(P_n,P_{n-1})))=0.
            \]
            This is a polynomial in $c$ with integer coefficients. If $P\not\equiv0$, this implies that $c$ is algebraic. If $P\equiv 0$, then for any $c\in\C$ the above system has a solution. We are going to show that this is not possible. Notice that $|w_0|=|c|^{1/d_0}$ for every $c$ and $|w_{n-1}|=1$. So it must be $n>1$, otherwise we will have $|c|=1$ for every $c\in\C$. Since $P_{n-1}(w_{n-2},w_{n-1},c)=0$, we can deduce that $|w_{n-2}|\sim |c|^{1/d_1}$ as $|c|\to\infty$. Then, since $d_0<d_1$ and $P_{n-2}(w_{n-3},w_{n-2},c)=0$, we have 
            \[
                |w_{n-3}|=\left|\frac{w_{n-2}^{d_0}+c}{w_{n-2}^{d_0}+3w_{n-2}+1}\right|^{1/d_1}\sim |c|^{(1-\frac{d_0}{d_1})\frac{1}{d_1}}.
            \]
            By induction, we deduce that $|w_0|\sim |c|^\alpha$ for some $0<\alpha\leq 1/d_1<1/d_0$, which contradicts with the fact that $|w_0|=|c|^{1/d_0}$.
            
            Suppose now that $w_0$ is periodic, and let $(w_0,\dots,w_{n-1})$ be a cycle in $\Pb^1$. From the above discussion, none of the points $(w_0,\dots,w_{n-1})$ is $\infty$. Therefore, we have the following system of polynomial equations:
            \[
             \begin{cases}
                Q_0(w_0,c):=w_0^{d_0} + c= 0, \\[6pt]
                Q_1(w_0,w_1,c):=w_0^{d_1}\bigl(w_1^{d_0}+3w_1+1\bigr) - \bigl(w_1^{d_0}+c\bigr) = 0, \\[6pt]
                Q_2(w_1,w_2,c):=w_1^{d_1}\bigl(w_2^{d_0}+3w_2+1\bigr) - \bigl(w_2^{d_0}+c\bigr) = 0, \\[6pt]
                \qquad\vdots \\[6pt]
                Q_{n-1}(w_{n-2},w_{n-1},c):=w_{n-2}^{d_1}\bigl(w_{n-1}^{d_0}+3w_{n-1}+1\bigr) - \bigl(w_{n-1}^{d_0}+c\bigr) = 0, \\[6pt]
                Q_{n}(w_{n-1},w_0,c):=w_{n-1}^{d_1}\bigl(w_0^{d_0}+3w_0+1\bigr) - \bigl(w_0^{d_0}+c\bigr) =0.
            \end{cases}
            \]
            In the same way as above, we can prove that $c$ must be a solution of some nonzero polynomial with integer coefficients.

            \smallskip
            
            Take now $w_0\in\{w: w^{d_0}+3w+1=0\}$. For this case, we need to consider $c$ outside of the set of algebraic numbers $\{3\omega+1\mid \omega^{d_0}+3\omega+1=0\}$. This implies that $w^{d_0}+3w+1$ and $w^{d_0}+c$ have no common zeroes. Therefore, the only point $z$ which can be sent to $w_0$ is $\infty$. So, to prove that $w_0$ does not belong to a periodic cycle, we just need to show that $\infty$ does not belong to its forward orbit. Suppose by contradiction that it does. Then, as in the previous case, we have a system of equations:
            $$\begin{cases}
                R_0(w_0,c):=w_0^{d_0} +3w_0+1= 0, \\[6pt]
                R_1(w_0,w_1,c):=w_0^{d_1}\bigl(w_1^{d_0}+3w_1+1\bigr) - \bigl(w_1^{d_0}+c\bigr) = 0, \\[6pt]
                R_2(w_1,w_2,c):=w_1^{d_1}\bigl(w_2^{d_0}+3w_2+1\bigr) - \bigl(w_2^{d_0}+c\bigr) = 0, \\[6pt]
                \qquad\vdots \\[6pt]
                R_{n-1}(w_{n-2},w_{n-1},c):=w_{n-2}^{d_1}\bigl(w_{n-1}^{d_0}+3w_{n-1}+1\bigr) - \bigl(w_{n-1}^{d_0}+c\bigr) = 0, \\[6pt]
                R_n(w_{n-1}):=w_{n-1}^{d_1}-1=0.
            \end{cases}$$
            Except when $n=1$, we can repeat the same proof of the previous case and obtain $|w_0|\sim|c|^\alpha$ for some $\alpha>0$, contradicting the fact that $|w_0|\sim 1$. The case $n=1$ cannot happen. Otherwise, $w_0$ would be a common zero of $w^{d_0}+3w+1$ and $w^{d_1}-1$. The equation $w_0^{d_1}-1=0$ would imply $|w_0|=1$, but this in turn would imply $0=|w_0^{d_0}+3w_0+1|\ge |3w_0|-|w_0^{d_0}|-1=1$, which is a contradiction.

            \medskip
            We now treat the case $d_1>d_0$.

            For the case $w_0\in\{w:w^{d_0}+c=0\}$, the same proof as above yields $|w_0|\sim |c|^{1/d_1}$ or $|w_0|\sim1$ based on the parity of $n$. Both give a contradiction to $|w_0|=|c|^{1/d_0}$.
            
            For the case $w_0\in\{w: w^{d_0}+3w+1=0\}$, again we have $|w_0|\sim |c|^{1/d_1}$ or $|w_0|\sim1$ based on the parity of $n$. By construction, $w_0$ must be one of $d_0$ constant values of modulus different from $1$. If $c$ is sufficiently large, this rules out $|w_0|\sim |c|^{1/d_1}$. If the parity of $n$ gives $|w_0|\sim1$, we can choose $c$ very large, so that the reasoning of the proof tells us that $|w_0|$ is arbitrarily close to $1$, which is not possible by construction. This completes the proof.
         \end{proof}
    \end{ex}
    
    Let $h$ be a holomorphic correspondence on $\Pb^1$ of bidegree $(d_0,d_1)$. Define $\widehat{f}=(h,\dots,h)$ to be a holomorphic correspondence on $(\Pb^1)^k$. Since $H^{0,1}(\Pb^1,\C)=H^{1,0}(\Pb^1,\C)=0$, by K\"unneth formula we have 
    \[
        H^{q,q}((\Pb^1)^k,\C)=\bigoplus_{\substack{(s_1,s_2,\dots,s_k)\in\{0,1\}^k\\
        s_1+s_2+\cdots+s_k=q}}H^{s_1,s_1}(\Pb^1,\C)\otimes\cdots\otimes H^{s_k,s_k}(\Pb^1,\C).
    \]
    Notice that the action $h^*$ on $H^{0,0}(\Pb^1,\C)$ (resp. $H^{1,1}(\Pb^1,\C)$) is multiplication by $d_0$ (resp. $d_1$). Therefore, the action $\widehat{f}^*$ on $H^{q,q}((\Pb^1)^k,\C)$ is given by the diagonal matrix $d_1^qd_0^{k-q}\cdot I$. In particular, we have $d_q(\widehat{f})=d_1^qd_0^{k-q}$.

    By the proof of \cite[Lemma 5.4.5]{DS09AM} and \cite[Section 1]{GHK23CGD}, there exists a holomorphic covering map $\pi$ of degree $k!$ from $(\Pb^1)^k$ to $\Pb^k$ given by $\pi([x_1:y_1],\dots,[x_k:y_k])=[\eta_0:\cdots:\eta_k]$ where $\eta_j$ is given by the formula 
      \[
          \eta_j([x_1:y_1],\dots,[x_k:y_k]):=\sum_{\substack{(i_1,i_2,\dots,i_k)\in\{0,1\}^k\\
        i_1+i_2+\cdots+i_k=k-j} }\prod_{l=1}^k x_l^{i_l}\cdot y_l^{1-i_l}.
      \]
     Define
    \[
        f(\pi(z_1,\dots,z_k)):=\{\pi(w_1,\dots,w_k):w_i\in h(z_i) \text{ for }1\leq i\leq k\}.
    \]
    Since $\pi$ is surjective, $f$ is a well-defined multivalued map on $\Pb^k$ and its graph is given by $\Gamma_{f}:=(\pi\times \pi)(\Gamma_{\widehat{f}})$. We call $f$ the {\it $k$-fold symmetric product of $h$}. Since $\Gamma_{\widehat{f}}$ is a union of irreducible analytic sets of dimension $k$ in $(\Pb^1)^k\times (\Pb^1)^k$, so is $\Gamma_{f}$ in $\Pb^k\times\Pb^k$. By an abuse of notation, we use $\pi_1$ and $\pi_2$ to denote the canonical projections from $(\Pb^1)^k\times (\Pb^1)^k$ or $\Pb^k\times\Pb^k$ to their factors. Then we have $\pi_i\circ (\pi\times \pi)=\pi\circ\pi_i$ for $i=1,2$. The next lemma is easy to verify.
    
    \begin{lm}\label{indeterminacy-set}
        For any $Z\in \Pb^k$, we have $\pi_i^{-1}(Z)\cap\Gamma_{f}=\bigcup_{w\in\pi^{-1}(Z)}(\pi\times \pi)(\pi_i^{-1}(w)\cap\Gamma_{\widehat{f}})$ for $i=1,2$.
    \end{lm}
    
    The following is a generalization of \cite[Proposition 1.2]{GHK23CGD}.
      
      \begin{prop}\label{sym-prod}
          The following properties are true:
          \begin{nlist}
              \item $f$ is a holomorphic correspondence on $\Pb^k$ such that $f^N\circ \pi=\pi\circ \widehat{f}^N$ for all $N\in\N$ and its dynamical degrees are given by $d_q(f)=d_1^qd_0^{k-q}$;
              \item If $\widehat{f}^{-1}$ is a holomorphic correspondence, then so is $f^{-1}$. In this case, the adjoint multiplicity of $f^N$ satisfies $\delta(f^N)\leq k!\cdot\delta(\widehat{f}^N)$ for all $N$;
              \item If $h$ is given by $\Gamma_h=\{(z,w)\in \Pb^1\times \Pb^1:g_0(z)=g_1(w)\}$ where $g_i$ is a holomorphic endomorphism of $\Pb^1$ of algebraic degree $d_i$ for $i=0$ and $1$, then $f$ is given by $\Gamma_{f}=\{(Z,W)\in\Pb^k\times \Pb^k: G_0(Z)=G_1(W)\}$ where $G_i$ is a holomorphic endomorphism of $\Pb^k$ of algebraic degree $d_i$ for $i=0$ and $1$.
          \end{nlist}
      \end{prop}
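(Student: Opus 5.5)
The plan is to treat the three items in order, deducing everything for $f$ from the product correspondence $\widehat f$ through the $k!$-sheeted covering $\pi$ and Lemma \ref{indeterminacy-set}.

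\emph{Item (i).} First we check that $f$ is holomorphic. By Lemma \ref{indeterminacy-set} with $i=1$, the fibre $\pi_1^{-1}(Z)\cap\Gamma_f$ is a finite union of images under $\pi\times\pi$ of the fibres $\pi_1^{-1}(w)\cap\Gamma_{\widehat f}$, and these are finite because $\widehat f=(h,\dots,h)$ is holomorphic. Thus $I(f)=\emptyset$. Moreover, each irreducible component of $\Gamma_f$ projects onto $\Pb^k$ under both $\pi_1$ and $\pi_2$, since this holds for $\Gamma_{\widehat f}$ and $\pi$ is surjective.

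Next we prove the semiconjugacy $f^N\circ\pi=\pi\circ\widehat f^N$. This follows by induction from the definition of $f$, with multiplicities tracked on the Zariski open set where $\pi$ is unramified. The case $N=1$ is the definition. For the inductive step, $f^{N+1}(\pi(z))=f(\pi(\widehat f^N(z)))=\pi(\widehat f^{N+1}(z))$.

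For the dynamical degrees, the cleanest route is cohomological. From the semiconjugacy we get $\pi^*\circ f^*=\widehat f^*\circ\pi^*$ on $H^{q,q}$. Since $\pi^*$ is injective on cohomology (because $\pi_*\pi^*=k!\cdot\mathrm{id}$), $f^*$ on $H^{q,q}(\Pb^k)\cong\R$ is multiplication by the scalar $\lambda$ with $\pi^*\{\omega_{FS}^q\}$ an eigenvector of $\widehat f^*$ of eigenvalue $\lambda$. But $\widehat f^*=d_1^qd_0^{k-q}\,I$ on $H^{q,q}((\Pb^1)^k)$, so $\lambda=d_1^qd_0^{k-q}$. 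One point needs care here: $\pi^*f^*=\widehat f^*\pi^*$ on forms must respect multiplicity. We verify it on the unramified locus using Lemma \ref{DNV18AM_4-6}, with the graphs matched as currents $(\pi\times\pi)_*[\Gamma_{\widehat f}]=k!\,[\Gamma_f]$.

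\emph{Item (ii).} Holomorphicity of $f^{-1}$ follows exactly as in (i), now using Lemma \ref{indeterminacy-set} with $i=2$. For the bound on the adjoint multiplicity, we fix $W\in\Pb^k$ and a point $(Z,W)\in\Gamma_{f^N}$. Its multiplicity for $\pi_2|_{\Gamma_{f^N}}$ is a local covering degree. By the semiconjugacy, $\Gamma_{f^N}=(\pi\times\pi)(\Gamma_{\widehat f^N})$. The preimage of $W$ consists of at most $k!$ points $w$ with $\pi(w)=W$, and for each of them the branches of $\Gamma_{\widehat f^N}$ over a neighbourhood of $w$ have local degree at most $\delta(\widehat f^N)$. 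Pushing forward, the local degree of $\pi_2$ over $W$ at $(Z,W)$ is at most the sum over the at most $k!$ points $w$, which gives $\delta(f^N)\le k!\,\delta(\widehat f^N)$.

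\emph{Item (iii).} This is the main obstacle, because it requires an explicit algebraic description. We follow \cite{GHK23CGD} and let $G_i$ be the symmetric product of $g_i$, i.e.\ the endomorphism of $\Pb^k$ induced by $(g_i,\dots,g_i)$ via $\pi$. Its coordinates are obtained by expressing the elementary symmetric functions of $(g_i(z_1),\dots,g_i(z_k))$ in terms of $\eta_0,\dots,\eta_k$. We must check two things.
\begin{enumerate}
\item The map $G_i$ has algebraic degree $d_i$, by homogeneity, and no common zeros, because $g_i$ has none.
\item The sets $\Gamma_f=\{G_0(Z)=G_1(W)\}$ coincide. For the first inclusion, $(Z,W)\in\Gamma_f$ means $Z=\pi(z)$ and $W=\pi(w)$ with $g_0(z_j)=g_1(w_j)$, so $G_0(Z)=\pi(g_0(z))=\pi(g_1(w))=G_1(W)$. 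Conversely, if $G_0(Z)=G_1(W)$, then $\pi(g_0(z))=\pi(g_1(w))$, so $g_0(z)$ is a permutation of $g_1(w)$. After reordering $w$, which does not change $W$, we get $(z_j,w_j)\in\Gamma_h$.
\end{enumerate}
Finally, the multiplicities agree by a dimension and degree count on generic fibres.
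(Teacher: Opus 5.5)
Your proposal is correct and follows the paper's proof essentially step by step, in more detail than the paper gives: holomorphicity of $f$ and $f^{-1}$ via Lemma \ref{indeterminacy-set}, the semiconjugacy by induction, $d_q(f)$ from $\widehat f^*\circ\pi^*=\pi^*\circ f^*$ with $\widehat f^*=d_1^qd_0^{k-q}I$, the bound in (ii) from $\pi$ being a covering of degree $k!$, and (iii) from \cite[Proposition 1.2]{GHK23CGD} plus the same two inclusions. The one count to tidy is in (ii): using the invariance of $\Gamma_{\widehat f^N}$ under simultaneous permutation of coordinates, fix a single lift $w$ of $W$ and sum the local degrees of $\pi_2|_{\Gamma_{\widehat f^N}}$ at the points $(z,w)$, with $z$ among the at most $k!$ lifts of $Z$ (a single $w$ can pair with several such $z$, so your stated bound of $\delta(\widehat f^N)$ per lift of $W$ is not right as written, even though the conclusion $\delta(f^N)\le k!\,\delta(\widehat f^N)$ is).
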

      \vspace{-0.3\baselineskip}
      
      \begin{proof}
            We first prove (i) and (ii). Lemma \ref{indeterminacy-set} implies that when $\widehat{f}$ (resp. $\widehat{f}^{-1}$) is a holomorphic correspondence, so is $f$ (resp. $f^{-1}$). By definition, we have $f\circ\pi=\pi\circ \widehat{f}$ and by induction, we have $f^N\circ\pi=\pi\circ \widehat{f}^N$ for all $N\in\N$. Since we have $\widehat{f}^*\circ \pi^*=\pi^*\circ f^*$ and $f^*$ on $H^{q,q}(\Pb^k,\C)$ is the multiplication by $d_q(f)$, we deduce that $d_q(f)=d_q(\widehat{f})=d_1^qd_0^{k-q}$. The last assertion of (ii) follows from the fact that $\pi$ is a ramified covering of degree $k!$.

            It remains to prove (iii). By \cite[Proposition 1.2]{GHK23CGD}, there exist holomorphic endomorphisms $G_0$ and $G_1$ of $\Pb^k$ such that 
            \[
                \pi\circ (g_i,\dots,g_i)=G_i\circ \pi \quad\text{for } i=0,1.
            \]
            For any $(\pi(z_1,\dots,z_k),\pi(w_1,\dots,w_k))\in\Gamma_{f}$, we have 
            \begin{align*}
                G_0(\pi(z_1,\dots,z_k))&= \pi(g_0(z_1),\dots,g_0(z_k)) \\
                    &= \pi(g_1(w_1),\dots,g_1(w_k))= G_1(\pi(w_1,\dots,w_k))
            \end{align*}
            where the second equality comes from the definition of $h$. Conversely, consider any $(Z,W)$ satisfying the equation $G_0(Z)=G_1(W)$. Since $\pi$ is surjective, we may assume $Z=\pi(z_1,\dots,z_k)$ and $W=\pi(w_1,\dots,w_k)$. It then follows that 
            \[
                \pi(g_0(z_1),\dots,g_0(z_k))=\pi(g_1(w_1),\dots,g_1(w_k)).
            \]
            By changing the order of $(w_1,\dots,w_k)$, we get that $g_0(z_i)=g_1(w_i)$ for $1\leq i\leq k$. Therefore, $(Z,W)\in\Gamma_{f}$. This concludes the proof.
      \end{proof}

      Recall that the family of holomorphic endomorphisms of degree $d_0$ denoted by $\mathcal H_{d_0}(\Pb^k)$ can be parametrized by a Zariski dense open set in $\Pb^{N_{k,d_0}-1}$ where $N_{k,d_0}=(k+1)(d_0+k)!/(d_0!k!)$. For two integers $d_1,d_0\geq 1$, define a set of holomorphic correspondences on $\Pb^k$ by 
      \begin{align*}
          \mathcal F_{(d_0,d_1)}:=\Big\{f:\Gamma_f=\{(Z&,W)\in\Pb^k\times \Pb^k: G_0(Z)=G_1(W)\}\\
              &\text{ where } G_0\in\mathcal H_{d_0}(\Pb^k) \text{ and } G_1\in\mathcal{H}_{d_1}(\Pb^k)\Big\}.
      \end{align*}
      Consider the set $\{(G_0,G_1,Z,W)\in\mathcal{H}_{d_0}(\Pb^k)\times \mathcal{H}_{d_1}(\Pb^k)\times\Pb^k\times\Pb^k: G_0(Z)=G_1(W)\}$. Then it is an analytic set and therefore, $\mathcal{F}_{(d_0,d_1)}$ is a family of holomorphic correspondences. Define $\mathcal F_{(d_0,d_1)}^{-1}:=\{f^{-1}: f\in \mathcal F_{(d_0,d_1)}\}$. Notice that this is just $\mathcal{F}_{(d_1,d_0)}$.

      \begin{cor}\label{pk-corr}
          For $0< q\leq k$ and $d_1>d_0$, there exists an integer $N$ such that for generic elements $f$ in $\mathcal F_{(d_0,d_1)}$, $f^N$ has $q$-small adjoint multiplicity.
      \end{cor}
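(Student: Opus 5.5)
The plan is to produce one explicit member of $\mathcal F_{(d_0,d_1)}$ whose iterates have uniformly bounded adjoint multiplicity, and then spread this to generic members with Proposition \ref{generic}. Throughout I use the paper's convention that the graph equation $g_0(z)=g_1(w)$ gives dynamical degrees $d_q=d_1^qd_0^{k-q}$, as in Proposition \ref{sym-prod}(i).

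\textbf{Step 1 (degrees).} By Lemma \ref{deg_is_const}, applied to the family $\{f_t^N\}_{t\in\Sigma}$ with $\Sigma:=\mathcal H_{d_0}(\Pb^k)\times\mathcal H_{d_1}(\Pb^k)$ (a family by Remark \ref{the_defs_are_equiv}), the dynamical degrees do not depend on $t$. Since $d_q(f^N)=d_q(f)^N$, Proposition \ref{sym-prod}(i) gives
\[
d_q(f^N)/d_{q-1}(f^N)=(d_1/d_0)^N>1
\]
for every $f$ in the family. Hence $f^N$ has $q$-small adjoint multiplicity as soon as
\[
25k\bigl(4\delta(f^N)\bigr)^{k-q+1}<(d_1/d_0)^N .
\]
It therefore suffices to bound $\delta(f^N)$ by a constant $B$ independent of $N$ on a Zariski dense set of parameters. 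Given such a $B$, I choose $N$ with $(d_1/d_0)^N>25k(4B)^{k-q+1}$.

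\textbf{Step 2 (a good member).} Take $h=f_c$ from Example \ref{bounded-delta} with $c$ transcendental. Its graph has the form $g_0(z)=g_1(w)$ with $g_0(z)=z^{d_1}$ and $g_1(w)=(w^{d_0}+c)/(w^{d_0}+3w+1)$. I will relabel the degrees if necessary so that this matches the convention of Proposition \ref{sym-prod}(iii); if the degrees come out swapped, I use the analogous polynomial with the exponents exchanged. I also need to check that $g_1$ has degree exactly $d_0$ and is a genuine endomorphism of $\Pb^1$, i.e.\ that the numerator and denominator have no common zero. This holds for transcendental $c$.

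By Lemmas \ref{algebraic} and \ref{surface-delta}, there is $D$ with $\delta(h^n)\le D$ for all $n$. Since $h^{-1}$ is also a correspondence whose graph contains no fibre, $\widehat f=(h,\dots,h)$ and $\widehat f^{-1}$ are holomorphic. The graph of $\widehat f^N$ is a product of the graphs of $h^N$, so local multiplicities multiply and $\delta(\widehat f^N)\le D^k$.

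Let $f$ be the $k$-fold symmetric product of $h$. Proposition \ref{sym-prod} gives that $f\in\mathcal F_{(d_0,d_1)}$, that $f^{-1}$ is holomorphic, and that $\delta(f^N)\le k!\,D^k=:B$ for all $N$.

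\textbf{Step 3 (genericity).} With $N$ fixed as in Step 1, consider the adjoint family $\{f_t^{-N}\}_{t\in\Sigma}$. This is a family of holomorphic correspondences, because $\mathcal F_{(d_0,d_1)}^{-1}=\mathcal F_{(d_1,d_0)}$ is a family and we can take iterates by Remark \ref{the_defs_are_equiv}. Moreover $\delta(f_t^N)=\rho(f_t^{-N})$. By Proposition \ref{generic} with $\rho=B+1$, the set $\{t:\delta(f_t^N)\le B\}$ is Zariski open. It is nonempty by Step 2, and $\Sigma$ is connected (a Zariski open subset of a product of projective spaces), so this set is Zariski dense. Combined with Step 1, this proves the corollary.

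\textbf{Main obstacle.} I expect the delicate point to be Step 2. One must verify that the symmetric product of $f_c$ lands in $\mathcal F_{(d_0,d_1)}$ with the correct orientation of degrees, i.e.\ that Proposition \ref{sym-prod}(iii) applies with $G_0,G_1$ of degrees $d_0,d_1$ and genuinely holomorphic. One must also justify the multiplicativity bound $\delta(\widehat f^N)\le\delta(h^N)^k$ for product correspondences.
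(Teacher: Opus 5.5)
Your argument is the paper's proof. It uses the same seed $f_c$ with $c$ transcendental, the same uniform bound $\delta(f_0^N)\le k!\,D^k$ from Lemma \ref{surface-delta} and Proposition \ref{sym-prod}, the same choice of $N$ with $25k(4k!D^k)^{k-q+1}<(d_1/d_0)^N$, and the same application of Proposition \ref{generic} to $\{f^{-N}\}$. Your integer threshold $\rho=B+1$ is a harmless variant of the paper's $\rho$, and your product argument for $\delta(\widehat f^N)\le D^k$ supplies what the paper leaves implicit.

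The one thing to fix is your proposed remedy for the orientation issue you flag: do \emph{not} exchange the exponents. The correspondence $f_c$ as written has $d_0$ images and $d_1$ preimages. So by Proposition \ref{sym-prod}(i) its symmetric product has $d_q=d_1^qd_0^{k-q}$, which is exactly what your Step 1 uses. Written as $g_0(z)=g_1(w)$, it has $\deg g_0=d_1$ and $\deg g_1=d_0$, so its symmetric product is $\{G_0(Z)=G_1(W)\}$ with $\deg G_0=d_1$ and $\deg G_1=d_0$.

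The discrepancy comes from the paper's labelling in Proposition \ref{sym-prod}(iii) and in the definition of $\mathcal F_{(d_0,d_1)}$. Read literally, with $\deg G_0=d_0$ and $\deg G_1=d_1$, every member $f=G_1^{-1}\circ G_0$ satisfies $f^*=G_0^*\circ(G_1)_*$. Hence $d_q(f)=d_0^qd_1^{k-q}<d_{q-1}(f)$ when $d_1>d_0$, and no member could have $q$-small adjoint multiplicity. So $\mathcal F_{(d_0,d_1)}$ must be understood as the family containing the symmetric product of $f_c$ as written.

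Exchanging the exponents would give a seed whose symmetric product has $d_q/d_{q-1}=d_0/d_1<1$. By Lemma \ref{deg_is_const}, that would contradict your Step 1 rather than support it.
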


\begin{proof}
    Take $h_0$ as in Example \ref{bounded-delta}. Then by Lemma \ref{surface-delta} and Proposition \ref{sym-prod}, its $k$-fold symmetric product $f_0$ belongs to $\mathcal{F}_{(d_0,d_1)}$ and 
    \[
        \delta(f_0^n)\leq k!\delta(\widehat{f}_0^n)\leq k!D^k
    \]
    for any $n\geq 1$. Notice that for any $f\in\mathcal F_{(d_0,d_1)}$ and $n\geq 1$, $d_q(f^n)/d_{q-1}(f^n)=(d_1/d_0)^n$. Choose an integer $N$ such that $25k(4k!D^k)^{k-q+1}<(d_1/d_0)^N$. 
    Apply Proposition \ref{generic} to the family $\{f^{-N}\}_{f\in\mathcal F_{(d_0,d_1)}}$ and $\rho=\frac{1}{4}[\frac{1}{25k}(\frac{d_1}{d_0})^N]^{1/(k-q+1)}$. Notice that $\Sigma_\rho$ is exactly the set of $f\in\mathcal F_{(d_0,d_1)}$ such that $f^N$ has $q$-small adjoint multiplicity. Then we have $f_0\in\Sigma_\rho$, and we deduce that $\Sigma_\rho$ is a Zariski dense open set, which is the desired result.
\end{proof}

\begin{cor}\label{poly-small}
    Consider the family $\mathcal F_{(d_0,d_1)}$ of correspondence on $\Pb^k$ as in Corollary \ref{pk-corr}. Then there exists an integer $N$ such that for generic elements $f$ in $\mathcal F_{(d_0,d_1)}$, $f^N$ has small multiplicity.
\end{cor}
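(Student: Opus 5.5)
We argue as in the proof of Corollary \ref{pk-corr}, now for the correspondence $F=(f,f^{-1})$ on $X\times X=\Pb^k\times\Pb^k$ and its $k$-small adjoint multiplicity. First we fix the relevant numbers. For $f\in\mathcal F_{(d_0,d_1)}$, Proposition \ref{sym-prod}(i) and Lemma \ref{deg_is_const} give $d_s(f)=d_1^sd_0^{k-s}$, so $f$ has simple action on cohomology with main degree $d_k(f)=d_1^k$ (using $d_1>d_0$). By Lemma \ref{F}(1), $F_N:=(f^N,f^{-N})$ then has $d_k(F_N)=d_1^{2kN}$. Bounding $d_{k-1}(F_N)$ by the Künneth/Proposition \ref{D05JGA-5_7-corr} estimate from the proof of Lemma \ref{F}, we get $d_{k-1}(F_N)\le (d_1/d_0)^{-N/2}\,d_1^{2kN}$, or at least $\le (d_1/d_0)^{-cN}d_k(F_N)$ for some fixed $c>0$. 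Indeed, every pair $(r,s)\ne(k,k)$ loses a factor at least $\sqrt{d_1/d_0}^{\,N}$ compared with the maximal term. Hence the ratio $d_k(F_N)/d_{k-1}(F_N)$ grows exponentially in $N$.

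Next we bound the adjoint multiplicity uniformly in $N$ for one explicit element. Let $f_0$ be the $k$-fold symmetric product of $h_0$ from Example \ref{bounded-delta}. By Lemma \ref{F}(3), $\delta(F_{0,N})=\delta(f_0^N)\,\delta(f_0^{-N})$. Proposition \ref{sym-prod}(ii) gives $\delta(f_0^N)\le k!D^k$, using Lemma \ref{surface-delta} for $h_0$. We also need $\delta(f_0^{-N})=\rho(f_0^N)$ to be bounded. For this we apply Lemma \ref{surface-delta} to $h_0^{-1}$, which means checking that the critical values of $h_0^{-1}$ (equivalently, the critical points of $h_0$, viewed backward) are not periodic for transcendental $c$. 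Lemma \ref{algebraic} can be run in the backward direction with the same resultant argument, and a periodic cycle of $h_0$ is also a cycle of $h_0^{-1}$, so this should go through. We then obtain $\delta(F_{0,N})\le (k!)^2D^{2k}=:B$ for every $N$. We choose $N$ so that $25\cdot 2k\,(4B)^{k+1}<d_k(F_N)/d_{k-1}(F_N)$; here $\dim(X\times X)=2k$ and the index $q=k$ gives the exponent $2k-k+1$. Then $f_0^N$ has small multiplicity.

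Finally we pass to generic $f$ by Zariski openness. The family $\{F_N^{-1}\}=\{(f^{-N},f^{N})\}_{f\in\mathcal F_{(d_0,d_1)}}$ is a family of holomorphic correspondences on $\Pb^k\times\Pb^k$. Its graph is a product of graphs of the families $\{f^N\}$ and $\{f^{-N}\}$ (Remark \ref{the_defs_are_equiv}), taken over the diagonal of the parameter space. Proposition \ref{generic} applied with $\rho:=\frac14\big[\frac{1}{50k}\,d_k(F_N)/d_{k-1}(F_N)\big]^{1/(k+1)}$ shows that $\Sigma_\rho$ is Zariski open. By Lemma \ref{deg_is_const} the degrees, and hence $\rho$, do not depend on $f$, so $\Sigma_\rho$ is exactly the set of $f$ with $f^N$ of small multiplicity. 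It contains $f_0$, so it is nonempty and therefore a Zariski dense open set.

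The main obstacle is the uniform bound on $\rho(f_0^N)=\delta(f_0^{-N})$: Corollary \ref{pk-corr} only needed $\delta$, and here the non-periodicity of critical values has to be verified for the adjoint of $h_0$ as well. If the direct argument does not work, we would instead strengthen the choice of transcendental $c$ so that both $h_0$ and $h_0^{-1}$ satisfy the hypotheses of Lemma \ref{surface-delta}, repeating the resultant argument of Lemma \ref{algebraic} for the backward orbits.
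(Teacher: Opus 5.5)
\textbf{The gap.} The weak point is the step you flag yourself: a uniform-in-$N$ bound on $\rho(f_0^N)=\delta(f_0^{-N})$ for the \emph{same} $f_0$. Nothing in the paper provides it.

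Lemma \ref{algebraic} only handles the critical values of $h_0$, i.e.\ the branch points of $\pi_2|_{\Gamma_{h_0}}$, which lie in $\{w^{d_0}+c=0\}\cup\{w^{d_0}+3w+1=0\}$. To apply Lemma \ref{surface-delta} to $h_0^{-1}$ you would need the branch points of $\pi_1|_{\Gamma_{h_0}}$ to lie on no cycle, and this is a different set. It contains $c$-dependent points cut out by the discriminant in $w$ of $(z^{d_1}-1)w^{d_0}+3z^{d_1}w+z^{d_1}-c$. For $d_0\ge 3$ it also contains the $d_1$-th roots of unity, over which $w=\infty$ has multiplicity $d_0-1$.

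Your remark that cycles of $h_0$ and $h_0^{-1}$ coincide does not help, because the starting points are different. The resultant and asymptotic argument would have to be redone for these new points, and ``this should go through'' leaves the proof incomplete.

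\textbf{The missing idea: you do not need a single witness.} Note first that on $\Pb^k$ one has exactly $d_k(F_N)/d_{k-1}(F_N)=(d_1/d_0)^N$, since $F^*$ acts on $H^{r,r}\otimes H^{s,s}$ by $d_r(f)d_{k-s}(f)$; your non-sharp bound also suffices. Set $\rho_0:=\frac12\big[\frac{1}{50k}(d_1/d_0)^N\big]^{1/(2(k+1))}$, which grows exponentially in $N$. Then argue as follows.
\begin{enumerate}
\item By Proposition \ref{generic}, applied separately to the families $\{f^{-N}\}$ and $\{f^{N}\}$ on $\Pb^k$, the sets $\{\delta(f^N)<\rho_0\}$ and $\{\delta(f^{-N})<\rho_0\}$ are Zariski open in the parameter space.
\item The first set contains your $f_0$ once $N$ is large, since $\delta(f_0^N)\le k!D^k$ for all $N$.
\item For the second set, rerun Example \ref{bounded-delta} with the roles of $d_0$ and $d_1$ exchanged. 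The example only requires $d_0\ne d_1$, and Lemma \ref{algebraic} treats both orderings. Its symmetric product $f_1$ lies in $\mathcal F_{(d_1,d_0)}=\mathcal F^{-1}_{(d_0,d_1)}$ with $\delta(f_1^n)$ bounded uniformly in $n$. Hence $f_1^{-1}\in\mathcal F_{(d_0,d_1)}$ is a witness, because $\delta\big((f_1^{-1})^{-N}\big)=\delta(f_1^N)$.
\item The parameter space is connected, so the intersection of these two nonempty Zariski open sets is Zariski dense and open.
\item On this intersection, Lemma \ref{F}(3) gives $\delta(F_N)=\delta(f^N)\delta(f^{-N})<\rho_0^2$, which is exactly $50k(4\delta(F_N))^{k+1}<(d_1/d_0)^N$.
\end{enumerate}
This is the paper's argument. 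It also spares you from setting up the product family $\{F_N\}$ on $\Pb^k\times\Pb^k$.
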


\begin{proof}
    By definition, for $f\in\mathcal F_{(d_0,d_1)}$, the fact that $f^N$ has small multiplicity means that we have $50k(4\delta(F^N))^{k+1}<(d_1/d_0)^N$, or more precisely $\delta(f^N)\cdot\delta(f^{-N})<\frac{1}{4}[\frac{1}{50k}(\frac{d_1}{d_0})^N]^{1/(k+1)}$. By Example \ref{bounded-delta}, there are $f_0\in\mathcal F_{(d_0,d_1)}$ and $f_1\in\mathcal F_{(d_1,d_0)}$ such that both $\delta(f_0^n)$ and $\delta(f_1^n)$ are bounded from above by a constant independent of $n$. Now we choose $N$ large enough such that 
    \[
        \max\{\delta(f_0^N), \delta(f_1^N)\}<\rho_0:=\frac{1}{2}\left[\frac{1}{50k}\left(\frac{d_1}{d_0}\right)^N\right]^{1/2(k+1)}.
    \]
    Applying Proposition \ref{generic} to the families $\mathcal{F}_{(d_0,d_1)}^N$ and $\mathcal{F}_{(d_1,d_0)}^N$ with $\rho=\rho_0$, we deduce that for generic $f\in\mathcal{F}_{(d_0,d_1)}$, we have $\max\{\delta(f^N), \delta(f^{-N})\}<\rho_0$, which implies the desired result.
\end{proof}

\appendix
\section{Properties of the pullback operator} \label{appendix}

Let $f$ be a holomorphic correspondence on a compact K\"ahler manifold $X$ with graph $\Gamma$. We do not make any additional assumption on $f$. Recall that the pullback operator
$$f^*(T)=(\pi_1)_*(\pi_2^*(T)\wedge[\Gamma])$$
is well defined when the current $T$ is smooth. The action of $f^*$ has been studied in detail in \cite{DNV18AM}. However, the definitions of $\cC_q^c$, $\cD_q^c$ and $\cD_q^{0c}$ given in \cite{DNV18AM} are more restrictive than ours: the authors only consider currents which can be written as a \emph{difference of two positive closed currents} with continuous super-potentials. This condition is not satisfied by all currents with continuous super-potentials. Therefore, the proofs in \cite{DNV18AM} do not cover our case. We give here an adapted version of these proofs for the properties of $f^*$ that we need. The main difference is the way that the pullback is constructed.

\begin{prop}
    The operator $f^*$ can be extended to a linear operator from $\cD_q^c$ to itself for $0\leq q\leq k$. This extension is continuous in the following sense: if the sequence $T_n$ converges SP-uniformly to $T$, then $f^*(T_n)$ converges to $f^*(T)$ in the sense of currents. It also preserves $\cC_q^c$ and $\cD_q^{0c}$. For every $T\in \cD_q^c$, we have $f^*\{T\}=\{f^*(T)\}$, and $(f^n)^*=(f^*)^n$ on both $\cD_q^c$ and $H^{q,q}(X,\R)$. Moreover, the action of $f^*$ on $H^{q,q}(X,\R)$ is dual to the one of $f_*$ on $H^{k-q,k-q}(X,\R)$.
\end{prop}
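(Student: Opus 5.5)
The plan is to define $f^*$ on $\cD_q^c$ by duality through super-potentials rather than through wedge products with $[\Gamma]$. This avoids writing $T$ as a difference of positive currents with continuous super-potentials.

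\emph{Definition.} Take $T\in\cD_q^c$ with class $\{T\}$, and let $\beta=\sum_i\lp T,\check\alpha_i\rp\alpha_i$ be the smooth form in the same class. Set $T_0:=T-\beta\in\cD_q^{0c}$, whose super-potential $\cU_{T_0}=\cU_T$ extends continuously to all of $\cD^0_{k-q+1}$. Define $f^*(T):=f^*(\beta)+f^*(T_0)$, where $f^*(\beta)$ is the smooth-case pullback. The exact part $f^*(T_0)$ is defined by its action on smooth test forms:
\[
\lp f^*(T_0),\varphi\rp:=\cU_{T_0}\big(\ddc f_*(\varphi)\big)+\sum_i\lp T_0,\check\alpha_i\rp\lp\alpha_i,f_*\varphi\rp .
\]
Here $f_*\varphi$ lies in the real span of positive closed currents. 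The second term vanishes because $T_0$ is exact. By \cite[Proposition 4.1]{DNV18AM}, $f_*$ is $*$-continuous and $f_*(\ddc\varphi)=\ddc f_*(\varphi)$ belongs to $\cD^0_{k-q+1}$ with $*$-norm bounded by a constant times $\|\varphi\|_{\cC^2}$. So the formula gives a linear functional of order at most $2$, i.e. a current.

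\emph{Consistency.} When $T$ is smooth, $\lp U_{\ddc\varphi'},\cdot\rp$-type manipulations, as in the proof of Corollary \ref{pf_is_hold}, give $\lp T_0,f_*\varphi\rp=\lp f^*T_0,\varphi\rp$, so the definition agrees with the classical one. Linearity is immediate from linearity of $T\mapsto(\beta,\cU_{T_0})$.

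\emph{Continuity.} If $T_n\to T$ SP-uniformly, the classes converge, so $\beta_n\to\beta$ smoothly and $f^*\beta_n\to f^*\beta$. Moreover $\ddc f_*\varphi$ ranges in a fixed $*$-bounded set, hence $\cU_{T_{n,0}}(\ddc f_*\varphi)\to\cU_{T_0}(\ddc f_*\varphi)$.

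\emph{Continuous super-potential of $f^*T$.} Repeat the computation in the proof of Proposition \ref{prop: equi_cont-SP}. For smooth $R\in\cD^0_{k-q+1}$ this gives
\[
\cU_{f^*T}(R)=\cU_T(f_*R)+\cU(R)\cdot\lp T,\check\alpha\rp ,
\]
first for smooth $T$ and then by the continuity just established. The right-hand side is $*$-continuous in $R$ because $f_*$ and $\cU=(\cU_{f^*\alpha_i})$ are continuous. Hence $f^*$ maps $\cD_q^c$ into $\cD_q^c$, and it maps $\cD_q^{0c}$ into itself since $f^*\beta$ is exact when $\beta$ is.

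\emph{Positivity.} For $T\in\cC_q^c$, I will use smooth positive approximations $T_n\to T$ that converge SP-uniformly; for currents with continuous super-potentials these exist by the regularization of \cite{DS10JAG}. Then $f^*T$ is a limit of positive currents, hence positive. The main obstacle is exactly this step: one must check that the regularization procedure, built from convolution with automorphism families, keeps super-potentials uniformly convergent. I would first try \cite[Theorem 2.4.4]{DS10JAG} combined with the fact that the $T_n$ have uniformly equicontinuous super-potentials when $T$ does.

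\emph{Cohomology, iterates and duality.} Since $f^*T_0$ is exact, $\{f^*T\}=\{f^*\beta\}=f^*\{T\}$. For smooth $T$ we have $(f^n)^*=(f^*)^n$ by the local description of Lemma \ref{DNV18AM_4-6}, and this extends to $\cD_q^c$ by iterating the super-potential formula together with continuity. Duality with $f_*$ follows on smooth closed forms from $\lp f^*\beta,\gamma\rp=\lp[\Gamma],\pi_2^*\beta\wedge\pi_1^*\gamma\rp=\lp\beta,f_*\gamma\rp$.
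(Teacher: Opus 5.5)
Your route differs from the paper's. You take the identity $\cU_{f^*(T)}(R)=\cU_T(f_*(R))$ for exact $T$ as the \emph{definition} of $f^*$. The paper instead defines $f^*(T)=(\pi_1)_*(\pi_2^*(T)\wedge[\Gamma])$ through the Dinh--Sibony intersection theory, after checking $\cU_{\pi_2^*(T)}=\cU_T\circ(\pi_2)_*$, and then \emph{derives} that identity by approximating $[\Gamma]$ by smooth closed forms $[\Gamma]_j$. Your choice makes linearity, SP-continuity and continuity of $\cU_{f^*(T)}$ nearly tautological. (The claim that $f_*\varphi$ lies in the span of positive closed currents is false for non-closed $\varphi$, but harmless, since only $f_*(\ddc\varphi)\in\cD^0_{k-q+1}$ enters.)

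The price is that positivity is no longer built in, and this is a genuine gap. \cite[Theorem 2.4.4]{DS10JAG} approximates currents by differences $T^+_\theta-T^-_\theta$ of smooth positive closed forms, not by positive forms. Nothing there produces smooth positive closed approximants of a current in $\cC_q^c$, let alone SP-uniformly convergent ones. On a general compact K\"ahler manifold there is no transitive automorphism group to regularize with, as on $\Pb^k$, and positive closed currents need not even be weak limits of smooth positive closed forms. Equicontinuity of the super-potentials would control the convergence, not the sign of the approximants, so the positivity step remains unproved.

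The missing idea is to approximate the graph rather than $T$, and to inherit positivity from the wedge product. For $T\in\cC_q^c$, the current $\pi_2^*(T)$ is positive with continuous super-potential. Hence $(\pi_1)_*(\pi_2^*(T)\wedge[\Gamma])$ is positive and closed by \cite[Section 3.3]{DS10JAG}. The computation \eqref{pb_cont_1}--\eqref{pb_cont_3}, which uses no positivity of the $[\Gamma]_j$, shows that this current has the same class and super-potential as your $f^*(T)$, so the two coincide.

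The same device closes two smaller holes in your sketch.
\begin{itemize}
\item You never check that $f^*(T)\in\cD_q$. A closed current with continuous super-potential is not a priori a difference of positive closed currents. However, your $f^*(T)$ is the weak limit of the currents $(\pi_1)_*(\pi_2^*(T)\wedge[\Gamma]_j)$, with the $[\Gamma]_j$ written as differences of smooth positive closed forms of bounded mass. These lie in $\cD_q$ with uniformly bounded $*$-norm, and such sets are weakly compact.
\item Your proof of $(f^n)^*=(f^*)^n$ invokes Lemma \ref{DNV18AM_4-6}. That lemma describes the wedge-product pullback, here applied to non-smooth currents such as $f^*(\beta)$. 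You only verified that the two definitions agree on smooth forms.
\end{itemize}
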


\begin{proof}
    Take $T\in\cD_q^c$. Since $\pi_2$ is a submersion, the current $\pi_2^*(T)$ is well defined. We want to study its super-potential. We can normalize potentials in $X\times X$ in such a way that, if $S$ is an exact smooth $(2k-q+1,2k-q+1)$-form on $X\times X$ with normalized potential $U_S$, then $(\pi_2)_*(U_S)$ is a normalized potential of $(\pi_2)_*(S)$ on $X$. Hence, we get
    \begin{equation} \label{projecting_the_superpot}
        \cU_{\pi_2^*(T)}(S)=\lp\pi_2^*(T),U_S\rp=\lp T,(\pi_2)_*(U_S)\rp=\cU_T((\pi_2)_*(S)).
    \end{equation}
    Since the pushforward $(\pi_2)_*$ acts continuously on currents and $T$ has a continuous super-potential, it follows that the super-potential of $\pi_2^*(T)$ is continuous. Therefore, the intersection $\pi_2^*(T)\wedge[\Gamma]$ is well defined (see \cite[Section 3.3]{DS10JAG}), and so is the pullback $f^*(T)$. We need to prove that it has a continuous super-potential.
    
    We prove it first for currents $T$ such that $\{T\}=0$. Given an exact smooth $(k-q+1,k-q+1)$-form $R$ on $X$, we have
    $$\cU_{f^*(T)}(R)=\lp f^*(T),U_R\rp=\lp\pi_2^*(T)\wedge[\Gamma],\pi_1^*(U_R)\rp.$$
    By \cite[Theorem 2.1]{DNV18AM}, we can find a sequence of smooth closed forms $([\Gamma]_j)$ converging to $[\Gamma]$ in $\cD_k(X\times X)$. Since the wedge product is a continuous operation, we get
    \begin{equation} \label{pb_cont_1}
        \cU_{f^*(T)}(R)=\lim_{j\rightarrow\infty}\lp\pi_2^*(T)\wedge[\Gamma]_j,\pi_1^*(U_R)\rp=\lim_{j\rightarrow\infty}\lp T,(\pi_2)_*([\Gamma]_j\wedge\pi_1^*(U_R))\rp.
    \end{equation}
    Since $\{T\}=0$, we have
    \begin{equation} \label{pb_cont_2}
        \lp T,(\pi_2)_*([\Gamma]_j\wedge\pi_1^*(U_R))\rp=\cU_T(\ddc(\pi_2)_*([\Gamma]_j\wedge\pi_1^*(U_R)))=\cU_T((\pi_2)_*([\Gamma]_j\wedge\pi_1^*(R))).
    \end{equation}
    Putting \eqref{pb_cont_1} and \eqref{pb_cont_2} together, we obtain
    \begin{equation}\label{pb_cont_3}
        \cU_{f^*(T)}(R)=\lim_{j\rightarrow\infty}\cU_T((\pi_2)_*([\Gamma]_j\wedge\pi_1^*(R)))=\cU_T(f_*(R)),
    \end{equation}
    where in the last equality we used the continuity of $\cU_T$. Since we also have that $f_*$ acts continuously on currents, \eqref{pb_cont_3} extends to all currents $R\in\cD_{k-q+1}^0$. Therefore, $\cU_{f^*(T)}$ is continuous.

    Take now any current $T\in\cD_q^c$. We can write it as the sum of a smooth current $\alpha_T$ and an exact current $T'$, which will still have a continuous super-potential $\cU_{T'}=\cU_T-\cU_{\alpha_T}$. By linearity of the pullback and of super-potentials, we have $\cU_{f^*(T)}=\cU_{f^*(T')}+\cU_{f^*(\alpha_T)}$. We just proved the continuity of $\cU_{f^*(T')}$. Since we can write $\alpha_T$ as the difference of two positive smooth forms, the continuity of $\cU_{f^*(\alpha_T)}$ follows from \cite[Lemma 4.3]{DNV18AM}. This proves the first assertion of the proposition.

    We now prove the continuity of $f^*$ acting on $\cD_q^c$. Take $T_n,T\in\cD_q^c$ such that the sequence $T_n$ converges SP-uniformly to $T$. From \eqref{projecting_the_superpot} it follows that the sequence $\pi_2^*(T_n)$ converges SP-uniformly to $\pi_2^*(T)$. The continuity of $f^*$ then follows from \cite[Theorem 3.3.2]{DS10JAG}.

    Since all operations in the definition of $f^*$ preserve positivity, $f^*$ preserves $\cC_q^c$.  Since $[\Gamma]$ is closed, $f^*$ commutes with $\de$ and $\debar$. Therefore, for every $T\in D_q^c$ we have $f^*\{T\}=f^*\{T\}$, and $f^*$ preserves $D_q^{0c}$.
    
    By \cite[Lemma 4.6]{DNV18AM}, we have $(f^n)^*=(f^*)^n$ when acting on differences of positive closed currents with continuous super-potentials. By the continuity that we proved above, this equality extends to all currents in $\cD_q^c$, and the same equality holds for the induced maps in cohomology.
    
    Finally, the duality of $f^*$ and $f_*$ in cohomology follows directly from their definitions when evaluated on smooth forms. The proof is complete.
\end{proof}

\bibliographystyle{plain}
\bibliography{refs}

\end{document}